\newtheorem*{assumption*}{\assumptionnumber}
\providecommand{\assumptionnumber}{}
\newenvironment{assumption}[1]
 {%
  \renewcommand{\assumptionnumber}{Assumption #1}%
  \begin{assumption*}%
  \protected@edef\@currentlabel{#1}%
 }
 {%
  \end{assumption*}
 }
\def\@footnotecolor{red}
\def\@footnotemark{%
    \leavevmode
    \ifhmode\edef\@x@sf{\the\spacefactor}\nobreak\fi
    \stepcounter{Hfootnote}%
    \global\let\Hy@saved@currentHref\@currentHref
    \hyper@makecurrent{Hfootnote}%
    \global\let\Hy@footnote@currentHref\@currentHref
    \global\let\@currentHref\Hy@saved@currentHref
    \hyper@linkstart{footnote}{\Hy@footnote@currentHref}%
    \@makefnmark
    \hyper@linkend
    \ifhmode\spacefactor\@x@sf\fi
    \relax
  }%
\newtheorem{theorem}{Theorem}[section]
\newtheorem{lemma}[theorem]{Lemma}
\newtheorem{corollary}[theorem]{Corollary}
\newtheorem{alemma}{Lemma}[section]
\newtheorem*{theorem*}{Theorem}
\theoremstyle{definition}
\newtheorem*{hypo*}{Hypothesis}
\def\BState{\State\hskip-\ALG@thistlm}
\newcommand{\BIGFIG}{0.7}
\def\blfootnote{\gdef\@thefnmark{}\@footnotetext}
\begin{document}

\title{High-Dimensional Asymptotics of Prediction: \\
Ridge Regression and Classification
}
\author{Edgar Dobriban \and Stefan Wager}
\date{Stanford University\blfootnote{{\it E-mail:} \texttt{\{dobriban, swager\}@stanford.edu}}}

\maketitle

\begin{abstract}
We provide a unified analysis of the predictive risk of ridge regression and regularized discriminant analysis in a dense random effects model.
We work in a high-dimensional asymptotic regime where $p, n \to \infty$ and $p/n \to \gamma \in (0, \, \infty)$, and allow for arbitrary covariance among the features. 
For both methods, we provide an explicit and efficiently computable expression for the limiting predictive risk, which depends only on the spectrum of the feature-covariance matrix, the signal strength, and the aspect ratio $\gamma$.
Especially in the case of regularized discriminant analysis, we find that predictive accuracy has a nuanced dependence on the eigenvalue distribution of the covariance matrix, suggesting that analyses based on the operator norm of the covariance matrix may not be sharp.
Our results also uncover several qualitative insights about both methods: for example, with ridge regression, there is an exact inverse relation between the limiting predictive risk and the limiting estimation risk given a fixed signal strength.
Our analysis builds on recent advances in random matrix theory.
\end{abstract}

\section{Introduction}
\label{sec:intro}

Suppose a statistician observes $n$ training examples $\p{x_i, \, y_i} \in \RR^p \times \yy$ drawn independently from an unknown distribution $\dd$, and wants to find a rule for predicting $y$ on future unlabeled draws $x$ from $\dd$. In other words, the statistician seeks a function \smash{$h: \RR^p \rightarrow \yy$}, $h(x) = g(c^\top x)$ for which \smash{$\EE[\dd]{\ell\p{y, \, h\p{x}}}$} is small, where $\ell\p{\cdot, \, \cdot}$ is a loss function; in regression $\yy = \RR$ and $\ell$ is the squared error loss, while in classification $\yy = \{0, \, 1\}$ and $\ell$ is the 0--1 loss. Such prediction problems lie at the heart over several scientific and industrial endeavors in fields ranging from genetics \citep{wray2007prediction} and computer vision \citep{russakovsky2014imagenet} to Medicare resource allocation \citep{kleinberg2015prediction}.

There are various enabling hypotheses that allow for successful prediction in high dimensions. These encode domain-specific knowledge and guide model fitting.
Popular options include the ``sparsity hypothesis'', i.e., that there is a good predictive rule depending only on $w \cdot x$ for some sparse weight vector $w$ \citep{candes2007dantzig,hastie2015statistical}, the ``manifold hypothesis'' positing that the $x_i$ have useful low-dimensional geometric structure \citep{rifai2011manifold,simard2000transformation}, and several variants of an ``independence hypothesis'' that rely on independence assumptions for the feature distribution \citep{bickel2004some,ng2001discriminative}.
The choice of enabling hypothesis is important from a practical perspective, as it helps choose which predictive method to use, e.g., the lasso with sparsity, neighborhood-based methods under the manifold hypothesis, or naive Bayes given independent features.

There are several applications, however, where the above enabling hypotheses are not known to apply,
and where practitioners have achieved accurate high-dimensional prediction using dense---i.e., non-sparse---ridge-regularized linear methods trained on highly correlated features.
One striking example is the case of document classification with dictionary-based features of the form ``how many times does the $j$-th word in the dictionary appear in the current document.''
Even though $p \gg n$, dense ridge-regularized methods reliably work well across a wide range of problem settings \citep{sutton2006introduction,toutanova2003feature}, and sometimes even achieve state-of-the-art performance on important engineering tasks \citep{wang2012baselines}. As another example, in a recent bioinformatics test of prediction algorithms \citep{bernau2015cross-study}, ridge regression---and a method that was previously proposed by those same authors---performed best, better than lasso regression and boosting.

The goal of this paper is to gain better understanding of when dense, ridge-regularized linear prediction methods can be expected to work well.
We focus on a random-effects hypothesis: we assume that the effect size of each feature is drawn independently at random. This can be viewed as an average-case analysis over dense parameters.
Our hypothesis is of course very strong; however, it yields a qualitatively different theory for high-dimensional prediction than popular approaches, and thus may motivate future conceptual developments.

\begin{hypo*}[random effects]
Each predictor has a small, independent random effect on the outcome.
\end{hypo*}

This hypothesis is fruitful both conceptually and methodologically. Using random matrix theoretic techniques \citep[see, e.g.,][]{bai2010spectral}, we derive closed-form expressions for the limiting predictive risk of idge-regularized regression and discriminant analysis, allowing for the features $x$ to have a general covariance structure $\Sigma$.
The resulting formulas are pleasingly simple and depend on $\Sigma$ through the Stieltjes transform of the limiting empirical spectral distribution.
More prosaically, $\Sigma$ only enters into our formulas through the almost-sure limits of \smash{$p^{-1} \, \tr((\hSigma + \lambda I_{p \times p})^{-1})$} and \smash{$p^{-1} \tr((\hSigma + \lambda I_{p \times p})^{-2})$}, where \smash{$\hSigma$} is the sample covariance and $\lambda > 0$ the ridge-regularization parameter. Notably, the same mathematical tools can describe the two problems.

From a practical perspective, we identify several high-dimensional regimes where mildly regularized discriminant analysis performs strikingly well.
Thus, it appears that the random-effects hypothesis can at least qualitatively reproduce the empirical successes of \citet{bernau2015cross-study}, \citet{sutton2006introduction}, \citet{toutanova2003feature}, \citet{wang2012baselines}, and others.
We hope that further work motivated by generalizations of the random-effects hypothesis could yield a new theoretical underpinning for dense high-dimensional prediction.

\subsection{Overview of Results}

We begin with an informal overview of our results; in Section \ref{sec:rmt}, we switch to a formal and fully rigorous presentation. 
In this paper we analyze the predictive risk of ridge-regularized regression and classification when $n, \, p \rightarrow \infty$ jointly. We work in a high-dimensional asymptotic regime where $p/n$ converges to a limiting aspect ratio $p/n \rightarrow \gamma > 0$. The spectral distribution---i.e., the cumulative distribution function of the eigenvalues---of the feature covariance matrix $\Sigma$ converges weakly to a limiting spectral measure supported on $[0, \, \infty)$. This allows $\Sigma$ to be general, and we will see several examples later. 
In random matrix theory, this framework goes back to \citet{marchenko1967distribution}; see, e.g.,  \citet{bai2010spectral}. It has been used in statistics and wireless communications by, among others, \citet{couillet2011random}, \citet{serdobolskii2007multiparametric}, \citet{tulino2004random}, and \citet{yao2015large}. 

In this paper, we present results for ridge regression \citep{hoerl1970ridge} and regularized discriminant analysis (RDA) \citep{friedman1989regularized,serdobolskii1983minimum}.
 Our first result derives the asymptotic predictive risk of ridge regression. We observe a sample from the linear model $Y = Xw + \varepsilon$, where each row of $X$ is random with $\Cov{x} = \Sigma$ and where $\varepsilon$ is independent centered noise with coordinate-wise variance 1.
Given a new test example $x$, ridge regression then predicts $\hy = \hw_\lambda \cdot x$ for $\hw_\lambda = (X^\top X + n\lambda I_{p \times p})^{-1} X^\top Y$; the tuning parameter $\lambda > 0$ governs the strength of the regularization. When the regression coefficient $w$ is normally distributed with identity covariance, ridge regression is a Bayes estimator, thus a random effects hypothesis is natural. We consider a more general random-effects hypothesis where $\smash{\EE{w} = 0}$, and   $\smash{\Var{w}}$ =  $\smash{p^{-1} \alpha^2 I_{p \times p}}$. Let \smash{$\alpha^2 = \EE{\lVert w\rVert_2^2}$} be the expected signal strength, and let \smash{$\hSigma$} be the sample covariance matrix of the features.
Then, under assumptions detailed in Section \ref{sec:ridge}, we show the following result:

\begin{theorem*}[informal statement]
The predictive risk of ridge regression, i.e.,
\begin{equation*}
\Err\p{\hw_{\lambda}} := \EE[x, \, y \sim \dd]{\p{y - \hw_{\lambda} \cdot x}^2}
\end{equation*}
has an almost-sure limit under high-dimensional asymptotics.
This limit only depends on the signal strength $\alpha^2$, the aspect ratio $\gamma$, the regularization parameter $\lambda$, and 
the Stieltjes transform of the limiting eigenvalue distribution of $\hSigma$.
For the optimal tuning parameter, $\lambda^* = \gamma/\alpha^2$
\begin{equation}
\label{eq:ridge_example_2}
\Err\p{\hw_{\lambda^*}}\rightarrow_{a.s.}\frac{1}{\lambda^*v(-\lambda^*)},
\end{equation}
where $v$ is the companion Stieltjes transform of the limiting eigenvalue distribution of $\hSigma$, defined in Section \ref{sec:rmt}.
\end{theorem*}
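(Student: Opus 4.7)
The plan is to decompose the predictive risk into bias and variance terms, reduce both to traces of the resolvent $(\hSigma + \lambda I)^{-1}$, and then invoke random matrix theory. Writing $y = w^{\top} x + \varepsilon'$ for an independent test point with unit-variance noise, and conditioning on the training data and on $w$, we have $\Err(\hw_{\lambda}) = 1 + (w - \hw_{\lambda})^{\top} \Sigma (w - \hw_{\lambda})$. Plugging in the closed form $\hw_{\lambda} = (\hSigma + \lambda I)^{-1}(\hSigma w + n^{-1} X^{\top} \varepsilon)$ gives
\begin{equation*}
w - \hw_{\lambda} \;=\; \lambda (\hSigma + \lambda I)^{-1} w \;-\; n^{-1} (\hSigma + \lambda I)^{-1} X^{\top} \varepsilon.
\end{equation*}
Since $w$ and $\varepsilon$ are independent and mean zero with $\Var(w) = (\alpha^{2}/p) I$ and $\Var(\varepsilon) = I$, the cross term has mean zero, and the Hanson--Wright inequality replaces both quadratic forms by their conditional-on-$X$ expectations up to an almost-surely vanishing error. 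This leaves
\begin{equation*}
\Err(\hw_{\lambda}) \;=\; 1 \;+\; \frac{\alpha^{2} \lambda^{2}}{p}\, \mathrm{tr}\!\p{\Sigma (\hSigma + \lambda I)^{-2}} \;+\; \frac{1}{n}\, \mathrm{tr}\!\p{\Sigma \hSigma (\hSigma + \lambda I)^{-2}} \;+\; o(1).
\end{equation*}

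Using the identity $\hSigma (\hSigma + \lambda I)^{-1} = I - \lambda (\hSigma + \lambda I)^{-1}$, the second trace rewrites as $\mathrm{tr}(\Sigma (\hSigma + \lambda I)^{-1}) - \lambda\, \mathrm{tr}(\Sigma (\hSigma + \lambda I)^{-2})$, so the excess risk collapses to
\begin{equation*}
\frac{\lambda}{n}\!\p{\frac{\alpha^{2} \lambda}{\gamma_{p}} - 1}\, \mathrm{tr}\!\p{\Sigma (\hSigma + \lambda I)^{-2}} \;+\; \frac{1}{n}\, \mathrm{tr}\!\p{\Sigma (\hSigma + \lambda I)^{-1}} \;+\; o(1),
\end{equation*}
where $\gamma_{p} = p/n$. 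At the Bayes-optimal choice $\lambda^{*} = \gamma/\alpha^{2}$ the leading coefficient vanishes asymptotically, so $\Err(\hw_{\lambda^{*}}) = 1 + n^{-1} \mathrm{tr}(\Sigma (\hSigma + \lambda^{*} I)^{-1}) + o(1)$ almost surely. I would then invoke the Silverstein/Ledoit--P\'ech\'e deterministic-equivalent theory: the identity $1 - \gamma - \gamma z\, m(z) = -z\, v(z)$, which links the Stieltjes transforms $m$ and $v$ of the limiting spectra of $\hSigma$ and its companion, combined with the deterministic equivalent $(\hSigma - z I)^{-1} \approx ((1 - \gamma - \gamma z m(z)) \Sigma - z I)^{-1}$, gives
\begin{equation*}
\frac{1}{p}\, \mathrm{tr}\!\p{\Sigma (\hSigma + \lambda I)^{-1}} \;\longrightarrow\; \frac{1}{\lambda} \int \frac{t\, dH(t)}{1 + t\, v(-\lambda)} \quad \text{a.s.}
\end{equation*}
Silverstein's fixed-point equation $\gamma \int t\, dH(t)/(1 + t\, v(-\lambda)) = 1/v(-\lambda) - \lambda$ then yields $n^{-1} \mathrm{tr}(\Sigma (\hSigma + \lambda I)^{-1}) \to 1/(\lambda\, v(-\lambda)) - 1$, and adding back the noise contribution produces the advertised limit $1/(\lambda^{*}\, v(-\lambda^{*}))$.

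The principal obstacle is this random matrix step: the trace $p^{-1} \mathrm{tr}(\Sigma (\hSigma + \lambda I)^{-1})$ is a bilinear functional of $\Sigma$ and the resolvent of $\hSigma$, and cannot be read off from the Stieltjes transform of $\hSigma$ alone, so a deterministic equivalent for the resolvent itself is required. The standard route is a Sherman--Morrison leave-one-out expansion of $(\hSigma + \lambda I)^{-1}$, combined with concentration of quadratic forms $z_{i}^{\top} A z_{i}$ around $\mathrm{tr}(A)/n$ for the whitened rows, followed by verification that the resulting fixed-point equation has the unique solution $v(-\lambda)$. The remaining pieces --- the bias-variance decomposition, the algebraic cancellation at $\lambda^{*}$, and the Hanson--Wright concentration in $w$ and $\varepsilon$ --- are comparatively routine.
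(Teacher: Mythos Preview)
Your proposal is correct and follows essentially the same route as the paper's proof of Theorem~\ref{theo:ridge}: the same bias--variance decomposition of $(w-\hw_\lambda)^\top\Sigma(w-\hw_\lambda)$, the same algebraic cancellation at the optimal $\lambda$ reducing everything to $n^{-1}\tr\bigl(\Sigma(\hSigma+\lambda^* I)^{-1}\bigr)$, and the same Ledoit--P\'ech\'e limit for that trace. The only execution differences are that the paper takes expectation over $w,\varepsilon$ conditionally on $X$ (so the trace formula is exact and no Hanson--Wright step is needed), uses the finite-sample $\lambda_p^*=\gamma_p/\alpha^2$ so the cancellation is exact rather than asymptotic, and simply cites the Ledoit--P\'ech\'e formula \eqref{eq:ledoit_peche} rather than re-deriving it via the deterministic equivalent and Silverstein's equation as you outline.
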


The required functionals of the limiting empirical eigenvalue distribution can be written in terms of almost-sure limits of simple quantities. 
For example, the result \eqref{eq:ridge_example_2} can be written as
\begin{align*}
\label{eq:ridge_example}
 \Err\p{\hw_{\lambda^*}} - \p{\frac{\gamma^2}{\alpha^2} \, \frac{1}{p} \tr\left[\p{\hSigma + \frac{\gamma}{\alpha^{2}} I_{p \times p}}^{-1}\right] + 1 - \gamma}^{-1} \rightarrow_{a.s.} 0.
\end{align*}
For general $\lambda$, the limiting error rate depends on the almost sure limits of both \smash{$p^{-1} \, \tr((\hSigma + \lambda I_{p \times p})^{-1})$} and \smash{$p^{-1} \tr((\hSigma + \lambda I_{p \times p})^{-2})$}.

Thanks to the simple form of \eqref{eq:ridge_example_2}, we can use our results to gain qualitative insights about the behavior of ridge regression.
We show that, when the signal-to-noise ratio is high, i.e., $\alpha \gg 1$, the accuracy of ridge regression has a sharp phase transition at $\gamma = 1$ regardless of $\Sigma$, essentially validating a conjecture of \citet{liang2010interaction} on the ``regimes of learning'' problem.
We also find that ridge regression obeys an \emph{inaccuracy principle}, whereby there are no correlation structures $\Sigma$ for which prediction and estimation of $w^*$ are both easy. For $\gamma=1$ this simplifies to
\begin{equation*}
\label{eq:inaccuracy}
\Err\p{\hw_{\lambda}} \cdot \EE{\Norm{\hw_\lambda - w^*}_2^2} \geq {\alpha^2}; 
\end{equation*}
this bound is tight for optimally-tuned ridge regression. We refer to Section \ref{sec:pred-estimation} for the general relation. We find the simplicity of the inverse relation remarkable. 

In the second part of the paper, we study regularized discriminant analysis in the two-class Gaussian problem
\begin{equation}
\label{eq:setup}
y \sim \cb{\pm 1} \text{ with } \PP{y = 1} = 1/2, \text{ and } x \sim \nn\p{\mu_y, \, \Sigma},
\end{equation}
where $\smash{\mu_{\pm 1}}$ and $\Sigma$ are unknown. While our results cover unequal class probabilities, for simplicity here we present the case when $\PP{y = 1} = 1/2$. We instantiate the random-effects hypothesis by assuming that the pairs $\smash{(\mu_{- 1, \, i}, \, \mu_{+ 1, \, i})}$ are independently and identically distributed for $i = 1, \, ..., \, p$, and write \smash{$\alpha^2 = \EE{\lVert \delta \rVert_2^2}$} with \smash{$\delta = (\mu_{+1} - \mu_{-1})/2$}.
We again work in a high-dimensional regime where $p/n \rightarrow \gamma > 0$ and the within-class covariance $\Sigma$ has a limiting spectral distribution.
Given this notation, the Bayes-optimal decision boundary is orthogonal to $w^* = \Sigma^{-1}\delta$; meanwhile, the regularized discriminant classifier predicts $\hy = \sign\p{\hw_\lambda \cdot x}$, where the form of the weight-vector $\hw_\lambda$ is given in Section \ref{sec:twoclass}.

\begin{theorem*}[informal statement]
In high dimensions, and in the metric induced by $\Sigma$, the angle between $w^*$ and $\hw_\lambda$, i.e.,
\begin{equation*}
\cos_\Sigma\p{\hw_\lambda, \, w^*} = \frac{\angles{\hw_\lambda, \, w^*}_\Sigma}{\sqrt{\angles{\hw_\lambda, \, \hw_\lambda}_\Sigma \, \angles{w^*, \, w^*}_\Sigma}}, \ \ \angles{u, \, v}_\Sigma = u^\top \Sigma v,
\end{equation*}
has an almost-sure limit.
The classification error of regularized discriminant analysis converges to an almost-sure limit that  depends only on this limiting angle, as well as the limiting Bayes error.
The limiting risk can be expressed in terms of $\alpha$, $\gamma$, $\lambda$,  as well the Stieltjes transform of the limit eigenvalue distribution of the empirical within-class covariance matrix.
\end{theorem*}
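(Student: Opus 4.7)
The plan is to reduce the misclassification probability to a continuous function of the Mahalanobis angle $\cos_\Sigma(\hw_\lambda, w^*)$ and of the squared Mahalanobis length $\|w^*\|_\Sigma^2$, and then to obtain almost-sure limits of these by decoupling $\hSigma$ from the sample mean-difference and reducing everything to a handful of trace functionals of the resolvent $(\hSigma + \lambda I)^{-1}$. A standard computation for the two-class Gaussian model shows that, modulo lower-order centering error, any linear rule $\sign(\hw\cdot x)$ has misclassification probability $\Phi\bigl(-\hw^\top\delta/\sqrt{\hw^\top\Sigma\hw}\bigr) = \Phi\bigl(-\|w^*\|_\Sigma\,\cos_\Sigma(\hw, w^*)\bigr)$, while the Bayes error is $\Phi(-\|w^*\|_\Sigma)$. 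So it suffices to obtain almost-sure limits of $\cos_\Sigma(\hw_\lambda, w^*)$ and of $\|w^*\|_\Sigma^2 = \delta^\top\Sigma^{-1}\delta$; the latter concentrates almost surely on $\alpha^2\int t^{-1}\,dH(t)$ by trace concentration applied to the random-effects vector $\delta$, where $H$ is the limiting spectral distribution of $\Sigma$.

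Writing $\hw_\lambda = (\hSigma + \lambda I)^{-1}\hat\delta$, I exploit Gaussian independence of $\hSigma$ and $\hat\delta$: conditional on $\hSigma$, $\hat\delta = \delta + \eta$ with $\eta \sim \mathcal{N}(0, c\Sigma/n)$ for an explicit constant $c$ determined by the class sizes. Expanding $\hw_\lambda^\top\delta$ and $\hw_\lambda^\top\Sigma\hw_\lambda$ as quadratic forms in $\delta$ and $\eta$ with matrices built from $(\hSigma+\lambda I)^{-1}$ and $\Sigma$, the Hanson--Wright inequality together with standard trace-lemma arguments give almost-sure concentration on a deterministic function of the four traces
\begin{align*}
&p^{-1}\tr(\Sigma^{-1}),\qquad p^{-1}\tr\bigl((\hSigma+\lambda I)^{-1}\bigr),\\
&p^{-1}\tr\bigl((\hSigma+\lambda I)^{-1}\Sigma(\hSigma+\lambda I)^{-1}\bigr),\qquad p^{-1}\tr\bigl((\hSigma+\lambda I)^{-1}\Sigma(\hSigma+\lambda I)^{-1}\Sigma\bigr).
\end{align*}
Cross terms of the form $\eta^\top M \delta$ are zero-mean with $O(1/n)$ variance given $\hSigma$ and are therefore negligible.

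I then identify the trace limits via random matrix theory. The second trace above is the empirical Stieltjes transform $m_{\hSigma}(-\lambda)$, converging almost surely to the deterministic $m(-\lambda)$ satisfying Silverstein's equation; the third is its $\lambda$-derivative. The fourth, $\Sigma$-sandwich trace requires a deterministic equivalent for anisotropic resolvents --- roughly $(\hSigma+\lambda I)^{-1} \asymp c_\lambda(c_\lambda\Sigma+\lambda I)^{-1}$ for a $\lambda$-dependent scalar $c_\lambda$ tied to $m$ and to the companion transform $v$ by $\lambda v(-\lambda) = 1-\gamma+\gamma\lambda m(-\lambda)$. Substituting the limits back into the Gaussian-error formula from the opening paragraph and invoking continuous mapping through $\Phi$ yields the limiting misclassification error in closed form in $\alpha$, $\gamma$, $\lambda$, and the Stieltjes transform.

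The hard part is the $\Sigma$-sandwich trace: unlike the pure resolvent trace it is not a derivative of $m$ and needs a deterministic equivalent for bilinear functionals of the resolvent against $\Sigma$, resting on Bai--Silverstein or Ledoit--P\'ech\'e-style identities. Everything else --- Gaussian conditioning, Hanson--Wright concentration, and continuous mapping through $\Phi$ --- is routine bookkeeping.
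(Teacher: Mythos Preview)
Your overall architecture matches the paper's proof of Theorem~\ref{theo:rda} closely: the same reduction of the error to $\Phi(-\hw_\lambda^\top\delta/\sqrt{\hw_\lambda^\top\Sigma\hw_\lambda})$, the same stochastic representation $\hdelta=\delta+\eta$ with $\eta$ Gaussian and independent of $\hSigma_c$, the same disposal of cross terms, and the same reduction to the four trace functionals you list. The paper also handles the offset $\hat b$ and the $\bar\mu$-contribution explicitly (its Lemmas~\ref{offset_to_zero} and~\ref{reduce_to_same_noncentrality}); your ``modulo lower-order centering error'' is correct in spirit but does rely on a growth condition on $\bar\mu$ (Assumption~\ref{assume:C}.2) that you should state.

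There is one genuine slip. You write that the third trace $p^{-1}\tr\bigl((\hSigma+\lambda I)^{-1}\Sigma(\hSigma+\lambda I)^{-1}\bigr)$ is the $\lambda$-derivative of the empirical Stieltjes transform $m_{\hSigma}(-\lambda)$. It is not: differentiating $p^{-1}\tr((\hSigma+\lambda I)^{-1})$ gives $-p^{-1}\tr((\hSigma+\lambda I)^{-2})$, with the \emph{identity} in the middle, not $\Sigma$. The third trace is instead (minus) the derivative of the Ledoit--P\'ech\'e functional $\kappa_p(\lambda)=p^{-1}\tr\bigl(\Sigma(\hSigma+\lambda I)^{-1}\bigr)$, whose limit $\kappa(\lambda)=\gamma^{-1}\bigl((\lambda v(-\lambda))^{-1}-1\bigr)$ is itself nontrivial and not a consequence of Marchenko--Pastur alone. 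The paper first invokes the Ledoit--P\'ech\'e identity for $\kappa_p$, then uses Vitali's theorem to pass the derivative through the almost-sure limit and obtain $p^{-1}\tr\bigl(\Sigma(\hSigma+\lambda I)^{-2}\bigr)\to -\kappa'(\lambda)=(v-\lambda v')/(\gamma\lambda^2 v^2)$. So both your third \emph{and} fourth traces involve $\Sigma$ explicitly and require anisotropic tools; neither follows from $m$ and its derivative. For the fourth trace the paper does not use a deterministic-equivalent substitution as you sketch, but instead cites the explicit limit $\Theta_2(\lambda,\gamma)$ from Chen et al.\ (2011), combined with a variance bound of Hachem et al.\ (2008) to upgrade convergence in probability to almost-sure; your anisotropic-equivalent route would also work, but you should be aware that it is doing the same heavy lifting as for the third trace, not more.
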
 

We can again use our result to derive qualitative insights about the behavior of RDA. We find that the limiting angle between $\hw_\lambda$ and $w^*$ converges to a non-trivial quantity as $\alpha^2 \to \infty$, implying that our analysis is helpful in understanding the asymptotics of RDA even in a very high signal-to-noise regime. Finally, by studying the limits $\lambda \rightarrow 0 $ and $\lambda \rightarrow \infty$, we can recover known high-dimensional asymptotic results about Fisher's linear discriminant analysis and naive Bayes methods going back to \citet{bickel2004some}, \citet{raudys1967determining}, \citet{saranadasa1993asymptotic}, and even early work by Kolmogorov.

Mathematically, our results build on recent advances in random matrix theory.
The main difficulty here is finding explicit limits of certain trace functionals involving both the sample and the population covariance matrix.
For instance, the Stieltjes transform $m$ of the empirical spectral distribution satisfies $m(-\lambda) = \lim_{p\to\infty}\smash{p^{-1} \tr((\hSigma + \lambda I_{p \times p})^{-1})}$.
However, standard random matrix theory does not provide simple expressions for the limits of functionals like $\smash{p^{-1} \tr(\Sigma(\hSigma + \lambda I_{p \times p})^{-1})}$ or $\smash{p^{-1} \tr([\Sigma(\hSigma + \lambda I_{p \times p})]^{-2})}$ that involve both $\Sigma$ and $\hSigma$.
For this we leverage and build on recent results, including the work of \citet{chen2011regularized}, \citet{hachem2007deterministic}, and \citet{ledoit2011eigenvectors}.
Our contributions include some new explicit formulas, for which we refer to the proofs.
These formulas may prove useful for the analysis of other statistical methods under high-dimensional asymptotics, such as principal component regression and kernel regression.

\subsection{A First Example}

A key contribution of our theory is a precise understanding of the effect of correlations between the features on regularized discriminant analysis. Correlated features have a non-trivial effect, and cannot be summarized using standard notions such as the condition number of $\Sigma$ or the classification margin. The full eigenvalue spectrum of $\Sigma$ matters. This is in contrast with popular analyses of high-dimensional classification methods, in which the bounds often depend on the operator norm $\|\Sigma\|_{op}$ (see for instance the review \citet{fan2011high}), thus suggesting that existing analyses of many classification methods are not sharp.

Consider the following examples: First, $\Sigma$ has eigenvalues corresponding to evenly-spaced quantiles of the standard {\tt Exponential} distribution; Second, $\Sigma$ has a depth-$d$ {\tt BinaryTree} covariance structure that has been used in population genetics to model the correlations between populations whose evolutionary history is described by a balanced binary tree \citep{pickrell2012inference}.
In both cases, we set the class means $\mu_y$ such as to keep the Bayes error constant across experiments.
Figure \ref{fig:first} plots our formulas for the asymptotic error rate along with empirical realizations of the classification error.

Both covariance structures are far from the identity, and have similar condition numbers.
However, the {\tt Exponential} problem is vastly more difficult for RDA than the {\tt BinaryTree} problem.
This example shows that classical notions like the classification margin or the condition number of $\Sigma$ cannot satisfactorily explain the high-dimensional predictive performance of RDA;
meanwhile, our asymptotic formulas are accurate even in moderate sample sizes.
Our computational results are reproducible and open-source software to do so is available from \url{https://github.com/dobriban/high-dim-risk-experiments/}.

\newcommand{\FW}{0.45\textwidth}
\newcommand{\TRA}{0}
\newcommand{\TRB}{0}
\newcommand{\TRC}{0}
\newcommand{\TRD}{0}
\newcommand{\PBW}{0.3}

\begin{figure}[p]
\centering
\begin{tabular}{ccc}
& {\large \tt BinaryTree} & {\large \tt Exponential} \\
{\begin{sideways}\parbox{\PBW\columnwidth}{\centering $\gamma = 0.5$}\end{sideways}} &
\includegraphics[width=\FW, trim = \TRA mm \TRB mm \TRC mm \TRD mm, clip = TRUE]{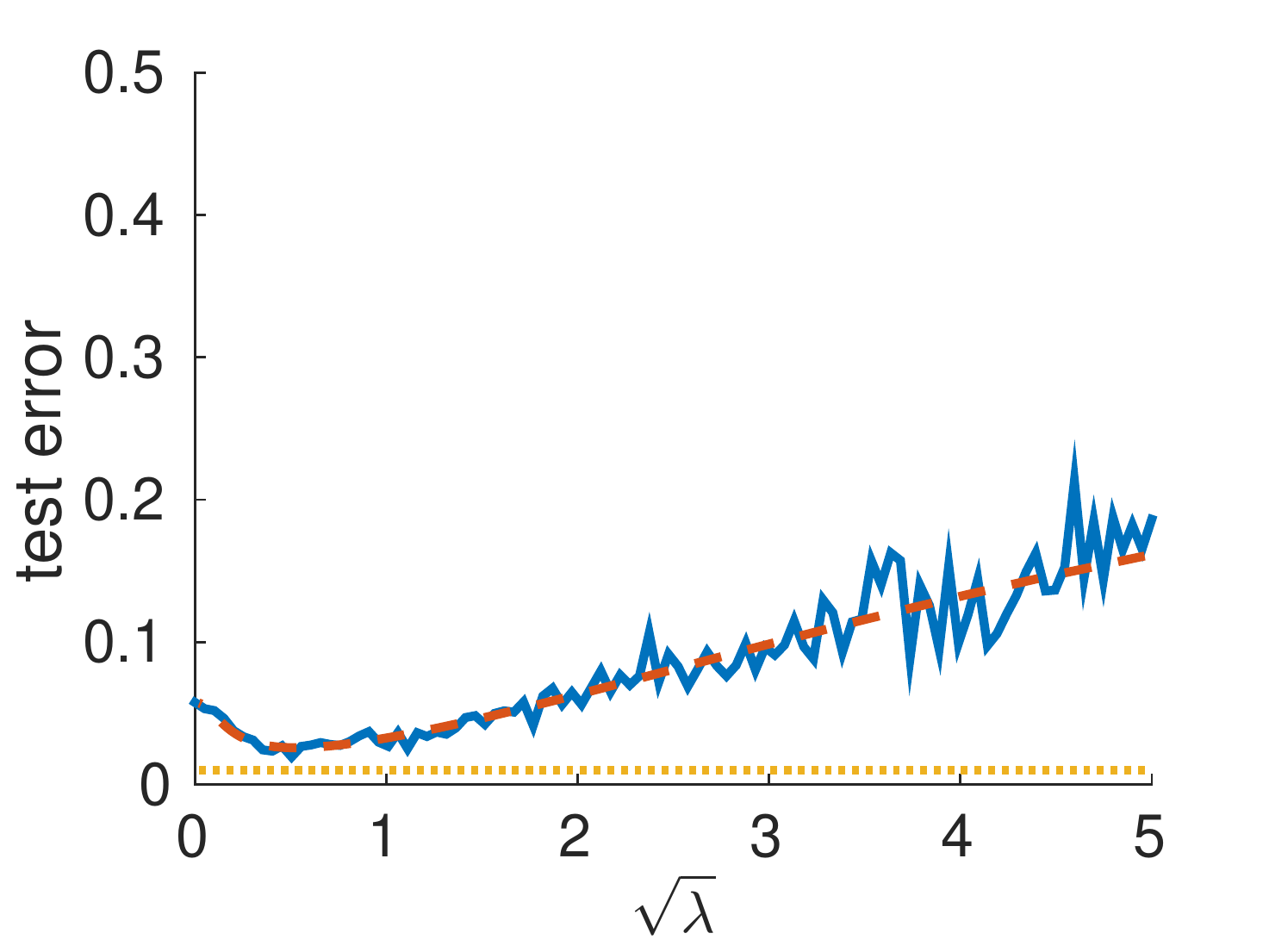} &
\includegraphics[width=\FW, trim = \TRA mm \TRB mm \TRC mm \TRD mm, clip = TRUE]{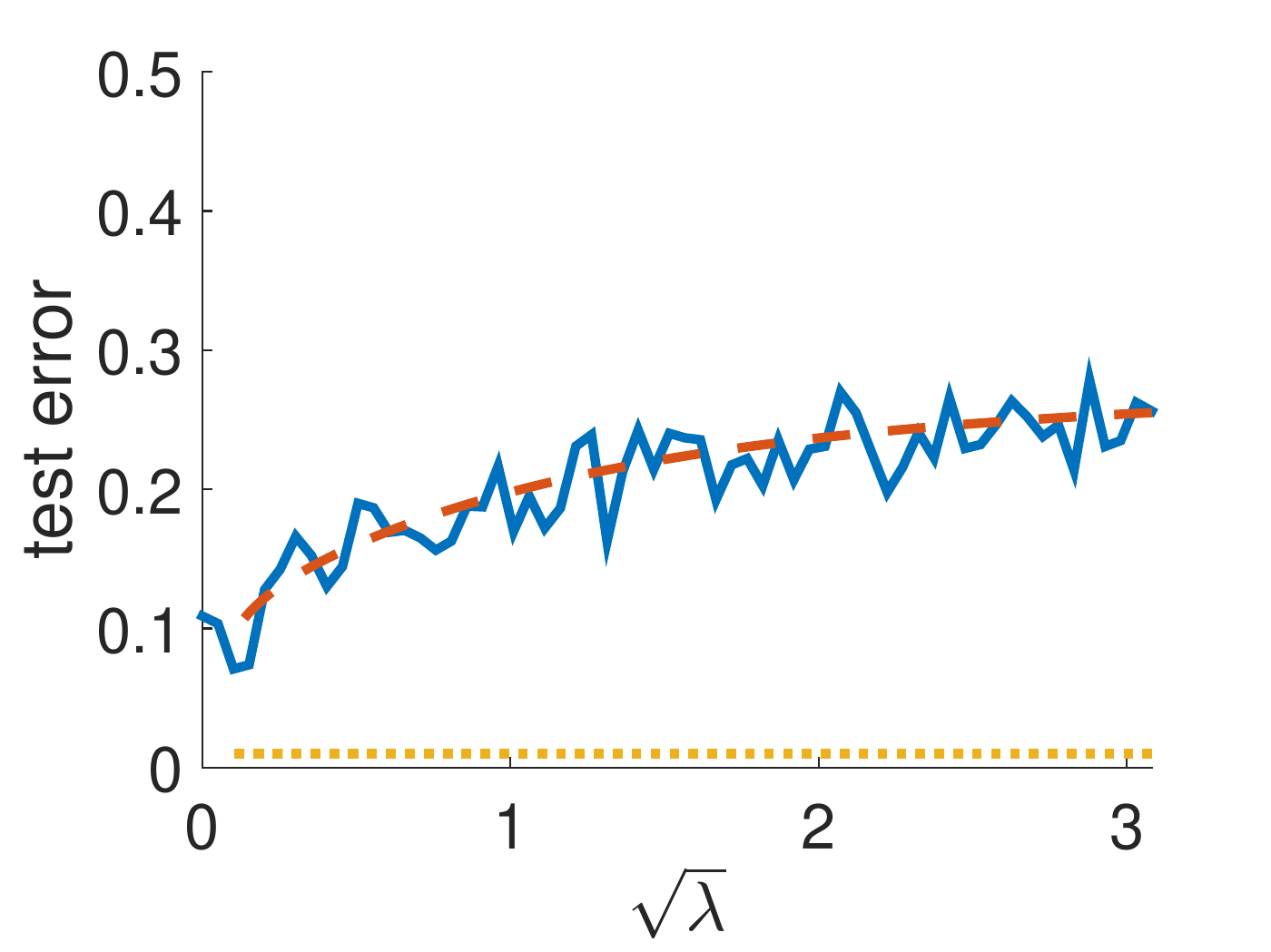} \\
{\begin{sideways}\parbox{\PBW\columnwidth}{\centering $\gamma = 1$}\end{sideways}} &
\includegraphics[width=\FW, trim = \TRA mm \TRB mm \TRC mm \TRD mm, clip = TRUE]{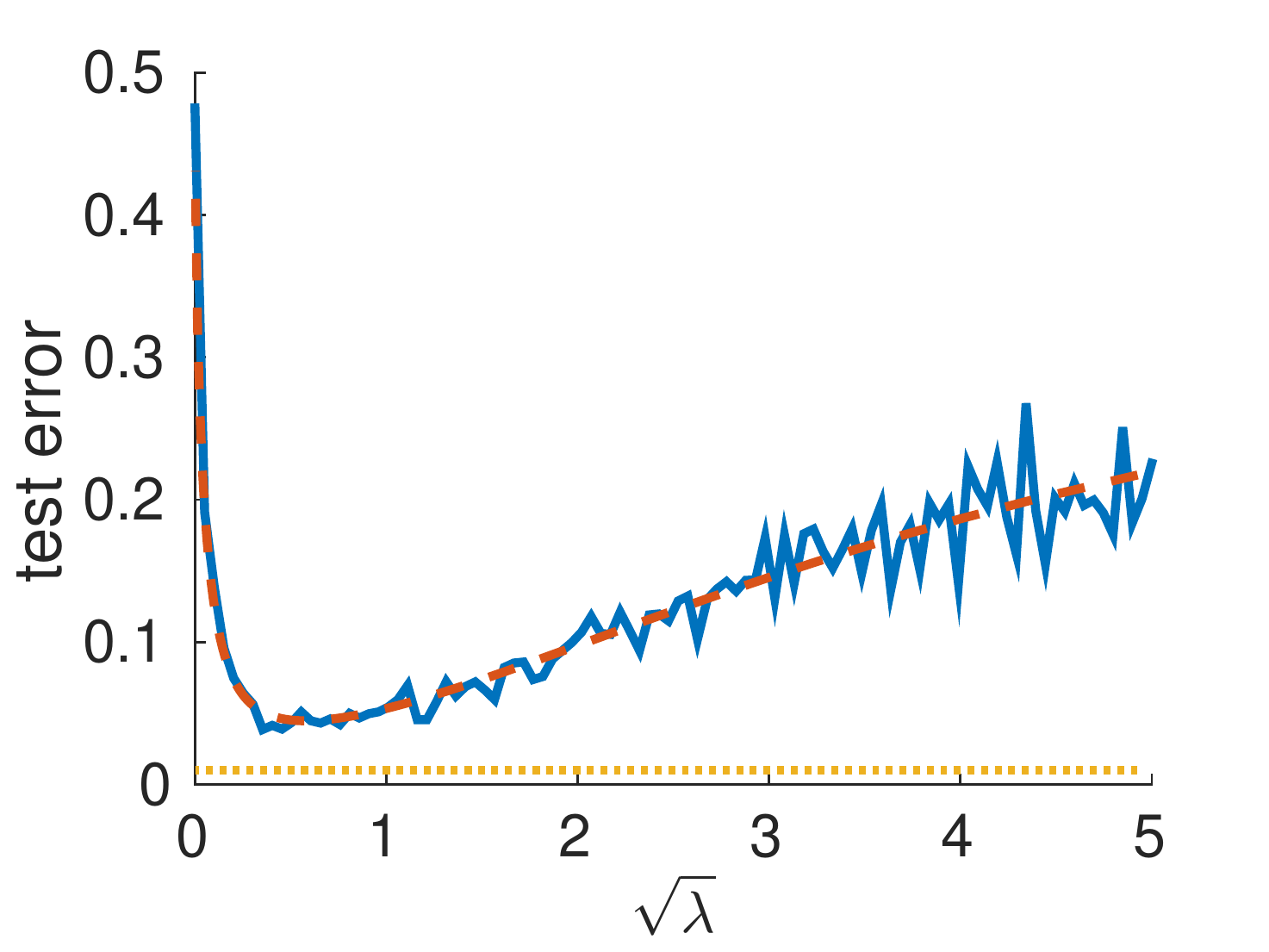} &
\includegraphics[width=\FW, trim = \TRA mm \TRB mm \TRC mm \TRD mm, clip = TRUE]{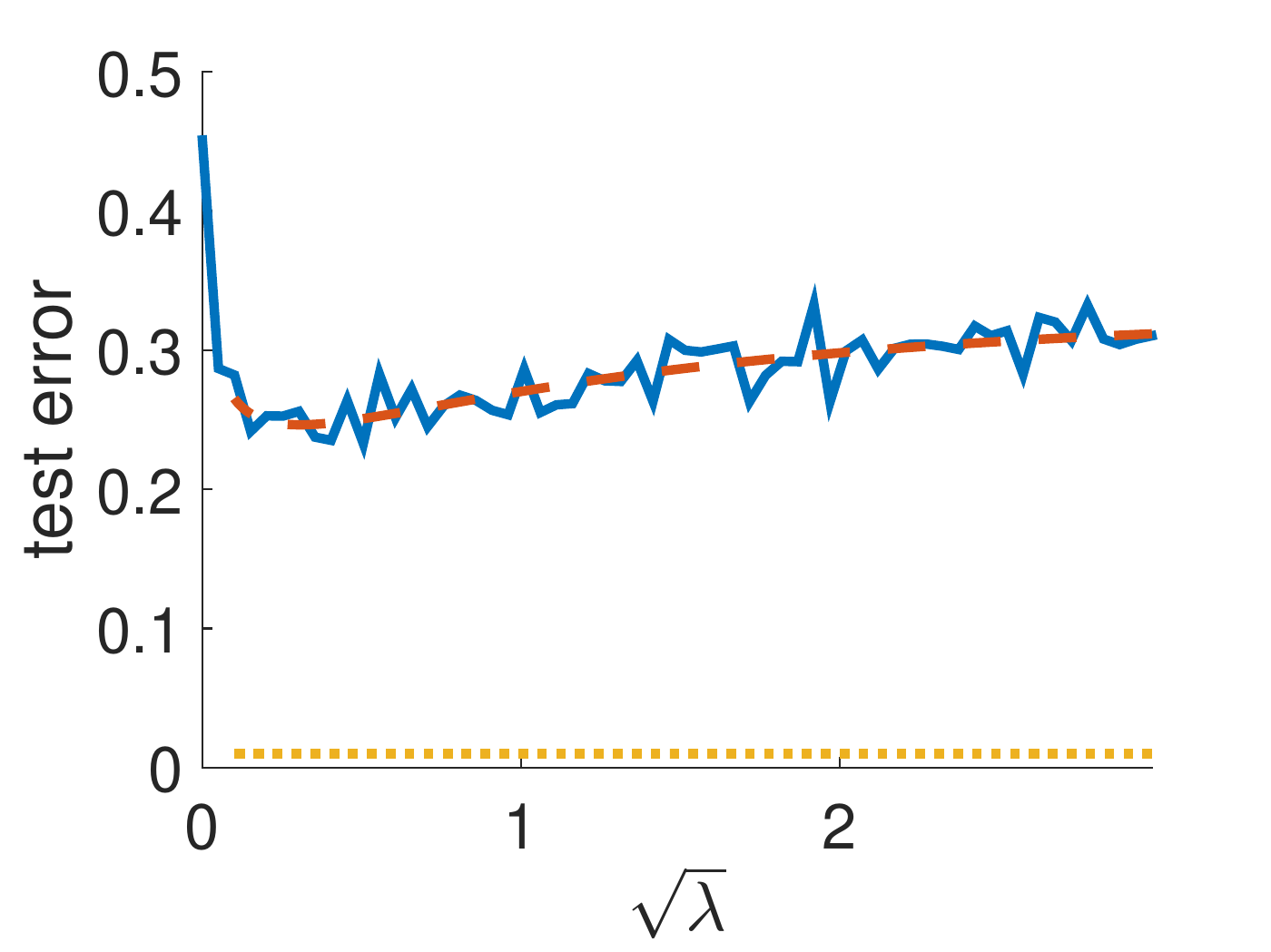}  \\
{\begin{sideways}\parbox{\PBW\columnwidth}{\centering $\gamma = 2$}\end{sideways}} &
\includegraphics[width=\FW, trim = \TRA mm \TRB mm \TRC mm \TRD mm, clip = TRUE]{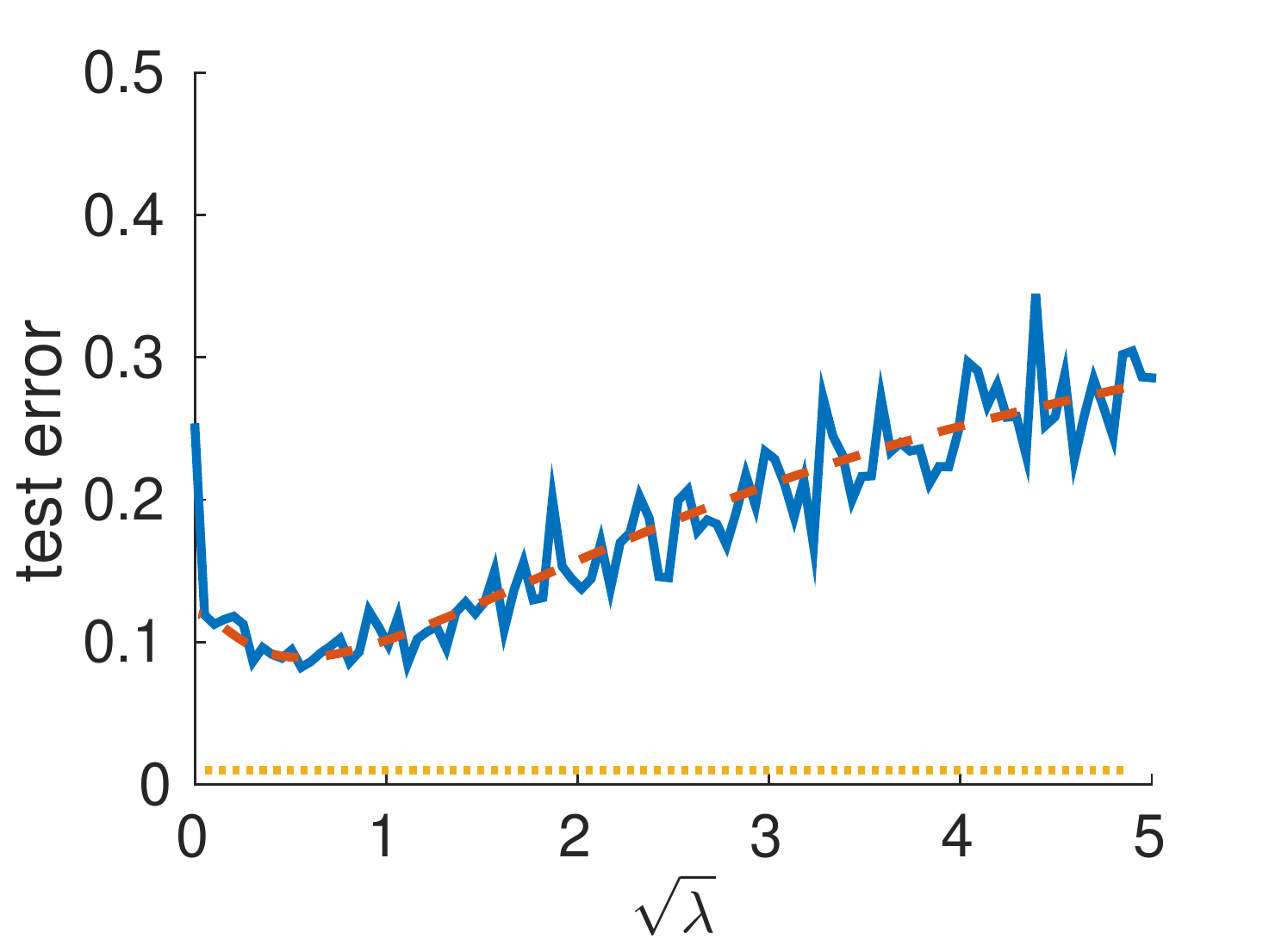} &
\includegraphics[width=\FW, trim = \TRA mm \TRB mm \TRC mm \TRD mm, clip = TRUE]{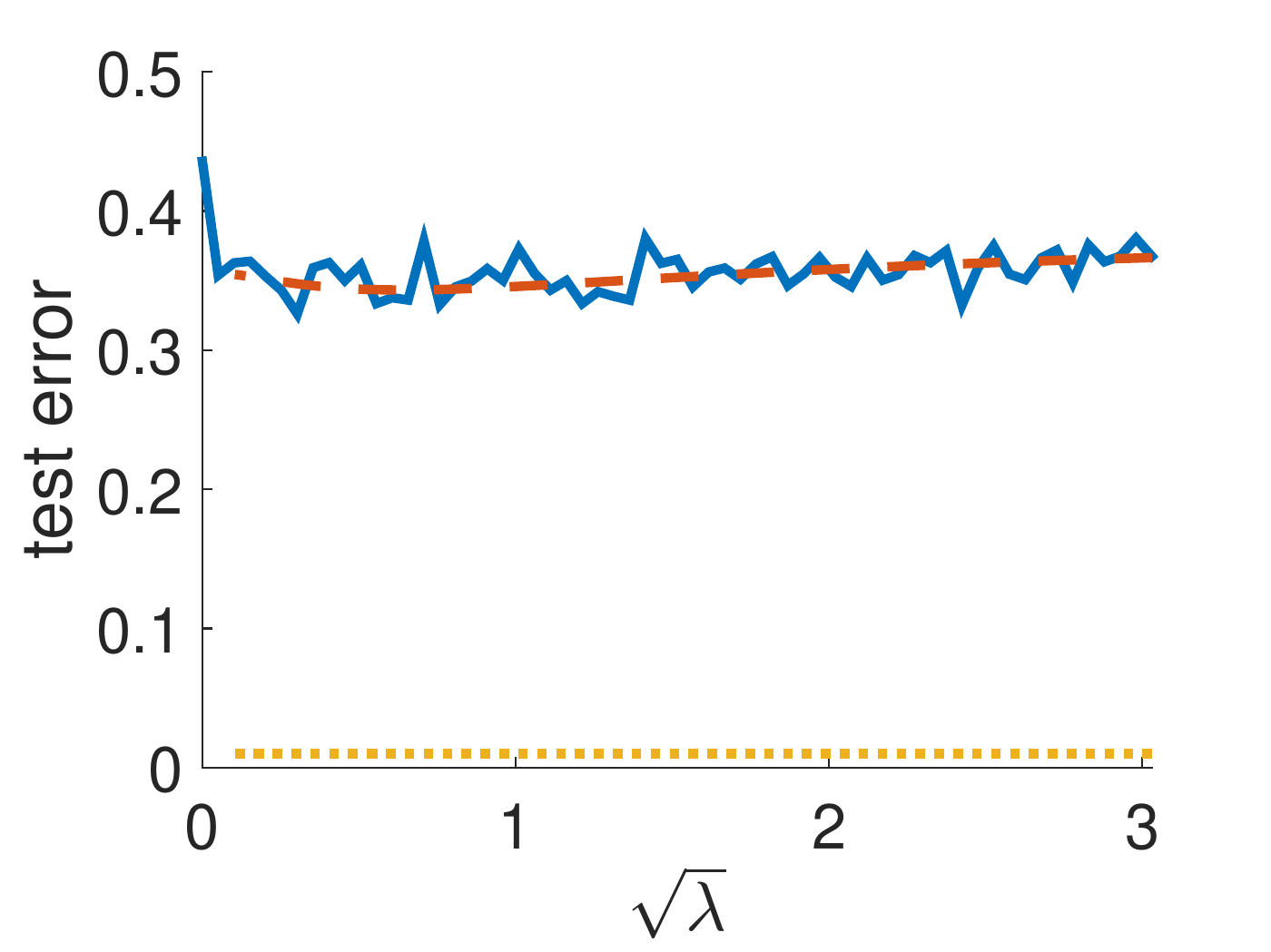} \\
\end{tabular}
\caption{Classification error of RDA in the {\tt BinaryTree} and {\tt Exponential} models. The theoretical formula (red, dashed) is overlaid with the results from simulations (blue, solid); we also display the oracle error (yellow, dotted).
The class means are drawn from $\smash{\mu_{\pm 1} \sim \nn\p{0, \, \alpha^2 \, p^{-1} I_{p \times p}}}$, where $\alpha$ is calibrated such that the oracle classifier always has an error rate of $1\%$.  For {\tt BinaryTree}, we train on $n = \gamma^{-1} p$ samples, where $p=1024$; for {\tt Exponential}, we use $n = 500$ samples. We test the trained model on 10,000 new samples, and report the average classification error. Our asymptotically-motivated theoretical formulas appear to be accurate here, even though we only have a moderate problem size. The parameter $\lambda$, defined in Section \ref{sec:twoclass}, quantifies the strength of the regularization.}
\label{fig:first}
\end{figure}

\subsection{Related Work}

Random matrix theoretic approaches  have been used to study regression and classification in high-dimensional statistics \citep{serdobolskii2007multiparametric,yao2015large}, as well as wireless communications \citep{couillet2011random,tulino2004random}. Various regression and M-estimation problems have been studied in high dimensions using approximate message passing \citep{bayati2012lasso,donoho2015variance} as well as methods inspired by random matrix theory \citep{bean2013optimal}. We also note a remarkably early random matrix theoretic analysis of regularized discriminant analysis by \citet{serdobolskii1983minimum}.
In the wireless communications literature, the estimation properties of ridge regression are well understood; however its prediction error has not been studied.

\citet{karoui2011geometric} study the geometric sensitivity of random matrix results, and discuss the consequences to ridge regression and regularized discriminant analysis, under weak theoretical assumptions.
In contrast, we make stronger assumptions that enable explicit formulas for the limiting risk of both methods, and allow us to uncover several qualitative phenomena.
Our use of \citet{ledoit2011eigenvectors}'s results simplifies the proof.

We review the literature focusing on ridge regression or RDA specifically in Sections \ref{sec:ridge_review} and  \ref{sec:rda_review} respectively.
Important references include, among others, \citet{bickel2004some}, \citet{dicker2014ridge}, \citet{karoui2013asymptotic}, \citet{fujikoshi2011multivariate},  \citet{hsu2014random}, \citet{saranadasa1993asymptotic}, and \citet{zollanvari2015generalized}.

\subsection{Basics and Notation}
\label{sec:rmt} 

We begin the formal presentation by reviewing some key concepts and notation from random matrix theory that are used in our high-dimensional asymptotic analysis.
Random matrix theory lets us describe the asymptotics of the eigenvalues of large matrices \citep[see e.g.,][]{bai2010spectral}.
These results are typically stated in terms of the \emph{spectral distribution}, which for a symmetric matrix $A$ is the cumulative distribution function of its eigenvalues: $\smash{F_A(x)} = \smash{ p^{-1} \sum_{i = 1}^p \mathrm{I}(\lambda_i(A) \le x)}$.
In particular, the well-known Marchenko-Pastur theorem, given below, characterizes the spectral distribution of covariance matrices. We will assume the following high-dimensional asymptotic model:

\begin{assumption}{A}\label{assume:A}(high-dimensional asymptotics)
The following conditions hold.
\begin{enumerate}
\item The data $X \in \RR^{n \times p}$ is generated as
$\smash{X = Z \, \Sigma^{1/2}}$ for an $n \times p$ matrix $Z$ with i.i.d. entries satisfying $\smash{\EE{Z_{ij}} = 0}$ and $\smash{\Var{Z_{ij}} = 1}$, and a deterministic $p \times p$ positive semidefinite covariance matrix $\Sigma$. 
\item The sample size $n \to \infty$ while the dimensionality $p \to \infty$ as well, such that the aspect ratio $p/n \to \gamma >0$.  
\item The spectral distribution $\smash{F_\Sigma}$ of $\Sigma$ converges to a limit probability distribution $H$ supported $\smash{[0, \, \infty)}$, called the population spectral distribution (PSD).
\end{enumerate}
\end{assumption}

Families of covariance matrices $\Sigma$ that fit the setting of this theorem include the identity covariance, {\tt BinaryTree}, {\tt Exponential} and the autoregressive {\tt AR-1} model with $\Sigma_{ij} = \rho^{|i-j|}$ \citep[see][for the last one]{grenander1984toeplitz}.

\begin{theorem*}[\citet{marchenko1967distribution}; \citet{silverstein1995strong}]
Under assumption \ref{assume:A}, the spectral distribution $\smash{F_{\hSigma}}$ of the sample covariance matrix $\smash{\hSigma}$ 
also converges weakly, with probability 1,  to a limiting distribution supported on $[0, \, \infty)$.
\end{theorem*}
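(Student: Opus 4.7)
The plan is to prove convergence of $F_{\hSigma}$ via the Stieltjes transform method. For $z \in \mathbb{C}^+ = \{z : \Im z > 0\}$, define the empirical Stieltjes transform
\begin{equation*}
m_n(z) = \frac{1}{p} \tr\bigl( \hSigma - z I_{p \times p} \bigr)^{-1} = \int \frac{1}{x - z} \, dF_{\hSigma}(x).
\end{equation*}
By the Stieltjes continuity theorem, $F_{\hSigma}$ converges weakly to a (sub-)probability measure on $[0,\infty)$ if and only if $m_n(z)$ converges for each $z \in \mathbb{C}^+$ to the Stieltjes transform $m(z)$ of a measure on $[0,\infty)$. So the whole problem reduces to establishing an almost-sure pointwise limit of $m_n$, and to verifying that this limit arises as the Stieltjes transform of a probability measure (total mass follows from $\lim_{y \to \infty} -iy\, m(iy) = 1$, inherited from the corresponding identity for $m_n$).

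First I would establish concentration: $m_n(z) - \mathbb{E}[m_n(z)] \to 0$ almost surely. The standard device is a martingale decomposition indexed by the rows $z_1,\ldots,z_n$ of $Z$. Letting $\mathcal{F}_k = \sigma(z_1,\ldots,z_k)$ and $D_k = \mathbb{E}[m_n(z) \mid \mathcal{F}_k] - \mathbb{E}[m_n(z) \mid \mathcal{F}_{k-1}]$, the increments $D_k$ are bounded in absolute value by $C/(n |\Im z|)$ because of the rank-one perturbation bound $|\tr(A^{-1}) - \tr(B^{-1})| \leq \|A^{-1}\|_{\mathrm{op}} + \|B^{-1}\|_{\mathrm{op}}$ when $A - B$ has rank one. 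Azuma's inequality combined with the Borel-Cantelli lemma then gives the almost-sure concentration, uniformly in compact subsets of $\mathbb{C}^+$ by a standard analyticity argument.

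Next I would identify $\lim_n \mathbb{E}[m_n(z)]$ by deriving a fixed-point equation. Writing $\hSigma = n^{-1} \Sigma^{1/2} Z^\top Z \Sigma^{1/2}$ and applying the Sherman-Morrison identity to isolate the contribution of each row $z_i$, a direct calculation yields
\begin{equation*}
\frac{1}{p} \tr\bigl((\hSigma - z I)^{-1} \Sigma\bigr) \approx \frac{1}{p} \sum_{i=1}^{n} \frac{z_i^\top \Sigma (\hSigma_{(i)} - z I)^{-1} \Sigma^{1/2}\, z_i / n}{1 + z_i^\top \Sigma^{1/2}(\hSigma_{(i)} - z I)^{-1} \Sigma^{1/2} z_i / n},
\end{equation*}
where $\hSigma_{(i)}$ is the leave-one-out sample covariance. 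The quadratic-form concentration $n^{-1} z_i^\top A z_i - n^{-1} \tr(A) \to 0$ for any bounded-norm matrix $A$ independent of $z_i$ (a consequence of moment bounds on the i.i.d.\ entries) allows one to replace each quadratic form by its trace. After this substitution and taking expectations, one recovers the Marchenko-Pastur-Silverstein fixed-point equation
\begin{equation*}
m(z) = \int \frac{dH(t)}{t\bigl(1 - \gamma - \gamma z m(z)\bigr) - z},
\end{equation*}
which, by the results of Silverstein, has a unique solution in $\{m \in \mathbb{C}^+ : -(1-\gamma)/z + \gamma m \in \mathbb{C}^+\}$ and defines the Stieltjes transform of a probability measure supported on $[0,\infty)$.

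The main obstacle will be the quadratic-form concentration step, which requires justifying that leave-one-out quantities can be substituted for the full-sample ones while controlling all error terms uniformly in $n$ and $p$. This in turn relies on truncation arguments to handle the possibly-unbounded entries of $Z$ (replacing $Z_{ij}$ by $Z_{ij} \mathbf{1}\{|Z_{ij}| \leq \eta_n \sqrt{n}\}$ for $\eta_n \to 0$) and on uniform bounds on $\|(\hSigma_{(i)} - z I)^{-1}\|_{\mathrm{op}}$ away from the real axis. Once these technical estimates are in place, uniqueness of the fixed point yields a deterministic limit $m(z)$ on $\mathbb{C}^+$, and inversion gives the limiting distribution of $F_{\hSigma}$ supported on $[0,\infty)$, completing the proof.
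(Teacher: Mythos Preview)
The paper does not prove this statement at all: it is stated as a classical result attributed to \citet{marchenko1967distribution} and \citet{silverstein1995strong}, and is invoked as background without argument. Your outline is essentially a sketch of Silverstein's proof via the Stieltjes transform method (martingale concentration for $m_n(z)-\mathbb{E}[m_n(z)]$, leave-one-out decomposition, quadratic-form concentration, truncation, and the resulting fixed-point equation), so in that sense you are following the same route as the cited reference. One small inaccuracy: the rank-one perturbation bound you quote is not in the right form; the relevant estimate is that if $A$ and $B$ are Hermitian with $\mathrm{rank}(A-B)\leq 1$, then $\bigl|\tr\bigl((A-zI)^{-1}\bigr)-\tr\bigl((B-zI)^{-1}\bigr)\bigr|\leq 1/|\Im z|$, which gives increments of order $1/(p\,|\Im z|)$ rather than involving operator norms.
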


The limiting distrbution $F$ is called the empirical spectral distribution (ESD), and is determined uniquely by a fixed point equation for its \emph{Stieltjes transform}, which is defined for any distribution $G$ supported on $[0,\infty)$ as
$$m_G(z) = \int_{l=0}^{\infty} \frac{dG(l)}{l-z}, \ \  z \in \mathbb{C}\setminus \mathbb{R}^+ . \ \ 
$$
Given this notation, the Stieltjes transform of the spectral measure of $\smash{\hSigma}$ satisfies
\begin{equation}
\label{eq:mp_lemma}
m_{\hSigma}\p{z} = \frac{1}{p} \tr \p{\p{\hSigma - z \, I_{p \times p}}^{-1}} \text{ converges to } m\p{z}
\end{equation}
both almost surely and in expectation, 
for any $z \in \mathbb{C}\setminus \mathbb{R}^+$; here, we wrote $m\p{z}:= m_F\p{z}$.
We also define the companion Stieltjes transform $v(z)$, which is the Stieltjes transform of the limiting spectral distribution of the matrix $\smash{\underline \hSigma = n^{-1} X X^\top}$. This is related to $m(z)$ by
\begin{equation}
\label{dual.ST}
\gamma\left(m(z)+1/z\right) = v(z)+1/z \ \text{ for all } \ z \in \mathbb{C}\setminus \mathbb{R}^+.
\end{equation}
\sloppy{In addition, we write the derivatives as\footnote{We will denote by $v'(-\lambda)$ the derivative of the Stieltjes transform,  $v'(z)$, evaluated at $z = -\lambda$; and not the derivative of the function $\lambda \to v(-\lambda)$.}
$ \smash{m'(z) = \int_{l=0}^{\infty} {dG(l)}/\p{l-z}^2}$
and $\smash{v'(z) = \gamma(m'(z) - z^{-2}) + z^{-2}}$.}
These derivatives can also be understood in terms of empirical observables, through the relation
\begin{equation*}
\frac{1}{p} \tr\p{\p{\hSigma + \lambda I_{p \times p}}^{-2}} \rightarrow_{a.s.} m'(-\lambda).
\end{equation*}
Finally, our analysis also relies on several more recent formulas for limits of trace functionals involving both $\Sigma$ and $\hSigma$. In particular, we use a formula due to \citet{ledoit2011eigenvectors}, who in the analysis of eigenvectors of sample covariance matrices showed that, under certain moment conditions:\footnote{See the supplement for more details about this result.}
\begin{equation}
\frac{1}{p}\tr\left( \Sigma \left(\hSigma + \lambda I_{p \times p} \right)^{-1}\right) \to_{a.s.} \frac{1}{\gamma}\left(\frac{1}{\lambda \, v(-\lambda)} -1\right) \ \ \text{ as } n, \, p \rightarrow \infty;
\label{eq:ledoit_peche}
\end{equation}

\section{Predictive Risk of Ridge Regression}
\label{sec:ridge}

In the first part of the paper, we study the predictive behavior of ridge regression under large-dimensional asymptotics.  Suppose that we have data drawn from a $p$-dimensional random-design linear model with $n$ independent observations $y_i = x_i \cdot w + \varepsilon_i$. The noise terms $\varepsilon_i$ are independent, centered, with variance one, and are independent of the other random quantities. The $x_i$ are arranged as the rows of the $n \times p$ matrix $X$, and $y_i$ are the entries of the $n \times 1$ vector $Y$. We estimate $w$ by ridge regression: $\smash{\hw_\lambda}$ = $\smash{(X^\top X + \lambda \, n \, I_{p \times p})^{-1} X^\top Y}$, for some $\lambda>0$.  We make the following random weights assumption, where $\smash{\alpha^2 = \mathbb{E}[\Norm{w}_2^2]}$ is the expected signal strength.

\begin{assumption}{B}\label{assume:B} (random regression coefficients)
The true weight vector $w$ is random with $\smash{\EE{w} = 0}$, and   $\smash{\Var{w}}$ =  $\smash{p^{-1} \alpha^2 I_{p \times p}}$.
\end{assumption}

Our result about the predictive risk of ridge regression is stated in terms of the expected predictive risk $\smash{r_\lambda(X) = \EE{(y - \hy_\lambda)^2 \cond X}}$, where $(x, \, y)$ is taken to be an independent test example from the same distribution as the training data, and $\hy_\lambda = \hw_\lambda \cdot x$.

\begin{theorem}
\label{theo:ridge}
Under Assumptions \ref{assume:A} and \ref{assume:B}, suppose moreover that the eigenvalues of $\Sigma$ are uniformly bounded above:\footnote{Below, $C$ will denote an arbitrary fixed constant whose meaning can change from line to line.} $\|\Sigma\|_{op} \le C$, for all $p$. Also assume  $\smash{\EE{Z_{ij}^{12}} < C}$ for all $p$. Then,
\begin{enumerate}
\item Writing $\gamma_p = p/n$ and $\lambda_p^* =\gamma_p \alpha^{-2}$, the finite sample predictive risk $r_{\lambda^*_p}(X)$ converges almost surely
\begin{align}
\label{eq:ridge}
 r_{\lambda_p^*}(X) &=  1 + \frac{\gamma_p}{p} \tr \p{ \Sigma \p{\hSigma + \frac{\gamma_p}{\alpha^2} I_{p \times p} }^{-1}}
\\
\notag
& \rightarrow_{a.s.} R^*(H,\alpha^2,\gamma):= \frac{1}{\lambda^* v(-\lambda^*)},
\end{align}
where $\lambda^* = \gamma \alpha^{-2}$.
\item Moreover, for any $\lambda>0$, the predictive risk converges almost surely to the limiting predictive risk $R_\lambda(H,\alpha^2,\gamma)$, where
\begin{equation*}
\label{eq:ridge_general}
R_\lambda(H,\alpha^2,\gamma)= \frac{1}{\lambda v(-\lambda)} \left\{ 1 + \left(\frac{\lambda\alpha^2}{\gamma}-1\right) \left(1-\frac{\lambda v'(-\lambda)}{v(-\lambda)}\right) \right\}. 
\end{equation*}
The choice $\lambda^*$ minimizes $R_\lambda(H,\alpha^2,\gamma)$.
\end{enumerate}
\begin{proof}[Proof outline]
The proof of part 1 is sufficiently simple to outline here; see the supplement for part 2.
We begin by verifying formula \eqref{eq:ridge}: 
\begin{align*}
r_{\lambda_p^*}(X)  &=  1+ \EE{ \p{x \cdot \p{\hw_{\lambda_p^*} - w}}^2 \cond X} \\
&=  1+ \EE{\p{\hw_{\lambda_p^*} - w}^\top \Sigma \p{\hw_{\lambda_p^*} - w} \cond X} \\
&= 1+  \lambda_p^{* \, 2} \, \alpha^2 \, n \, \tr\p{\Sigma\p{X^\top X + \lambda_p^* \, n \, I_{p \times p}}^{-2}} \\
&\ \ \ \ \ \ \ \  + \tr\p{\Sigma\p{X^\top X + \lambda_p^* \, n \, I_{p \times p}}^{-1} X^\top X \p{X^\top X + \lambda_p^* \, n \, I_{p \times p}}^{-1}} \\
&=  1 + \frac{\gamma_p}{p} \tr \p{ \Sigma \p{\hSigma + \frac{\gamma_p}{\alpha^2} I_{p \times p} }^{-1}}.
\end{align*}
\sloppy{On the last line we used the choice of $\lambda_p^*$. From the results of \citet{ledoit2011eigenvectors}, and from $\gamma_p\to\gamma$, it can be verified that $\smash{p^{-1} \tr ( \Sigma (\hSigma + \gamma_p\alpha^{-2} I_{p \times p} )^{-1})}$ converges almost surely to limit in \eqref{eq:ledoit_peche}, finishing part 1.}
\end{proof}
\end{theorem}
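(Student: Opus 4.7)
The plan is to extend the bias-variance expansion used in the proof of Part 1 to general $\lambda$, without imposing the cancellation that occurs at $\lambda_p^*$. Starting from $r_\lambda(X) = 1 + \EE{\p{\hw_\lambda - w}^\top \Sigma \p{\hw_\lambda - w} \cond X}$ and substituting $\hw_\lambda - w = \p{X^\top X + \lambda n I_{p \times p}}^{-1}\p{X^\top \varepsilon - \lambda n w}$, I would take expectations over $w$ and $\varepsilon$ using Assumption \ref{assume:B}, and then apply the resolvent identity $\p{\hSigma + \lambda I_{p \times p}}^{-1}\hSigma\p{\hSigma + \lambda I_{p \times p}}^{-1} = \p{\hSigma + \lambda I_{p \times p}}^{-1} - \lambda\p{\hSigma + \lambda I_{p \times p}}^{-2}$ to combine the bias and variance terms into
\begin{equation*}
r_\lambda(X) = 1 + \frac{\gamma_p}{p}\tr\p{\Sigma\p{\hSigma + \lambda I_{p \times p}}^{-1}} + \frac{\lambda\p{\lambda\alpha^2 - \gamma_p}}{p}\tr\p{\Sigma\p{\hSigma + \lambda I_{p \times p}}^{-2}}.
\end{equation*}
At $\lambda = \gamma_p/\alpha^2$ the coefficient of the second trace vanishes, consistent with Part 1.

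Next I would pass to $p \to \infty$ in each trace. The first limit is supplied by \eqref{eq:ledoit_peche}. For the second, I would observe that $\lambda \mapsto p^{-1}\tr(\Sigma(\hSigma + \lambda I_{p \times p})^{-1})$ extends to a function analytic on $\mathbb{C}\setminus\mathbb{R}^+$ whose $\lambda$-derivative is exactly $-p^{-1}\tr(\Sigma(\hSigma + \lambda I_{p \times p})^{-2})$. Upgrading the a.s.\ pointwise convergence in \eqref{eq:ledoit_peche} to a.s.\ locally uniform convergence on a complex neighborhood of $(0,\infty)$ therefore yields convergence of derivatives. Differentiating the limit $\gamma^{-1}(1/(\lambda v(-\lambda)) - 1)$ with respect to $\lambda$ gives
\begin{equation*}
\frac{1}{p}\tr\p{\Sigma\p{\hSigma + \lambda I_{p \times p}}^{-2}} \to_{a.s.} \frac{1}{\gamma \lambda^2 v(-\lambda)}\p{1 - \frac{\lambda v'(-\lambda)}{v(-\lambda)}}.
\end{equation*}
Substituting both limits into the formula for $r_\lambda(X)$ and collecting the $1/(\lambda v(-\lambda))$ factor yields the announced expression for $R_\lambda(H,\alpha^2,\gamma)$.

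For the optimality of $\lambda^*$, the cleanest route is a Bayesian/BLUE argument at the finite-sample level. Under Assumption \ref{assume:B}, only the first two moments of $(w, Y)$ given $X$ enter the predictive risk, and the Wiener formula $\hw_{\mathrm{BLUE}} = \EE{wY^\top \cond X}\p{\EE{YY^\top \cond X}}^{-1} Y$ identifies the best linear predictor of $w$ under any PSD-weighted squared loss. A direct computation with $\EE{wY^\top \cond X} = (\alpha^2/p)\,X^\top$ and $\EE{YY^\top \cond X} = (\alpha^2/p)\,XX^\top + I_n$, combined with the push-through identity $X^\top(XX^\top + aI)^{-1} = (X^\top X + aI)^{-1}X^\top$, shows that this BLUE equals $\hw_{\lambda_p^*}$. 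Since $r_\lambda(X) - 1$ is a $\Sigma$-weighted MSE for $w$, this gives $r_{\lambda_p^*}(X) \leq r_\lambda(X)$ for every $X$ and every $\lambda > 0$, and passing to the a.s.\ limit yields $R_{\lambda^*} \leq R_\lambda$.

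The main obstacle I anticipate is justifying the exchange of limit and derivative. I would handle this via a normal-families/Vitali argument: first secure a.s.\ simultaneous convergence in \eqref{eq:ledoit_peche} on a countable dense subset of $(0,\infty)$ (a countable union of null sets is null), then use the hypothesis $\|\Sigma\|_{op} \leq C$ together with the $p$-uniform bound $\|(\hSigma + \lambda I_{p \times p})^{-1}\| \leq 1/\mathrm{dist}(\lambda, \mathrm{spec}(\hSigma))$ to show that the family $\{\lambda \mapsto p^{-1}\tr(\Sigma(\hSigma + \lambda I_{p \times p})^{-1})\}_p$ is locally uniformly bounded on a complex neighborhood of $(0,\infty)$. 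Normality then upgrades the pointwise convergence to locally uniform convergence and hence preserves derivatives. An alternative is to invoke a squared-resolvent analogue of \eqref{eq:ledoit_peche} from \citet{ledoit2011eigenvectors} directly; the remaining steps are purely algebraic.
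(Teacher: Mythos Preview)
Your proposal is correct and follows essentially the same route as the paper: the same finite-sample bias--variance expansion yielding
\[
r_\lambda(X) = 1 + \frac{\gamma_p}{p}\tr\bigl(\Sigma(\hSigma+\lambda I)^{-1}\bigr) + \frac{\lambda(\lambda\alpha^2-\gamma_p)}{p}\tr\bigl(\Sigma(\hSigma+\lambda I)^{-2}\bigr),
\]
the same Vitali/normal-families argument to transfer the derivative through the Ledoit--P\'ech\'e limit \eqref{eq:ledoit_peche}, and the same algebraic collection of terms into $R_\lambda$. The only real difference is the optimality step. The paper first reduces to Gaussian observations (which does not change $R_\lambda$), notes that $\lambda_p^*$ is then the finite-sample Bayes choice, and invokes a small equicontinuity lemma to conclude that the limit of the minimizers $\lambda_p^*\to\lambda^*$ is a minimizer of the limiting risk. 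Your BLUE/Wiener argument reaches the finite-sample inequality $r_{\lambda_p^*}(X)\le r_\lambda(X)$ directly from second moments, without the Gaussian reduction, and then simply passes the inequality to the a.s.\ limit using Part~1 and the convergence half of Part~2. This is marginally cleaner: it sidesteps both the Gaussian detour and the equicontinuity lemma, at the cost of the push-through computation you sketch.
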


This result fully characterizes the first order behavior of the predictive risk of ridge regression under high-dimensional asymptotics. To verify its finite-sample accuracy, we perform a simulation with the {\tt BinaryTree} and {\tt Exponential} models. We compute the limit risks using the algorithms in the supplement. The results in Figure \ref{fig:second} show that the formulas given in Theorem \ref{theo:ridge} appear to be accurate, even in small sized problems. In Figure \ref{fig:second}, for {\tt BinaryTree} we train on $n = \gamma^{-1} p$ samples, where $p=2^4$; for {\tt Exponential} on $n=20$. We set the signal strength to $\alpha^2=1$ and generated $w$, $X$, and $\varepsilon$  as Gaussian random variables with i.i.d. entries and the desired variance. The results are averaged over 500 simulation runs; we evaluated the empirical prediction error using a test set of size 100.

\begin{figure}[p]
\centering
\begin{tabular}{ccc}
& {\large \tt BinaryTree} & {\large \tt Exponential} \\
{\begin{sideways}\parbox{\PBW\columnwidth}{\centering $\gamma = 0.5$}\end{sideways}} &
\includegraphics[width=\FW, trim = \TRA mm \TRB mm \TRC mm \TRD mm, clip = TRUE]{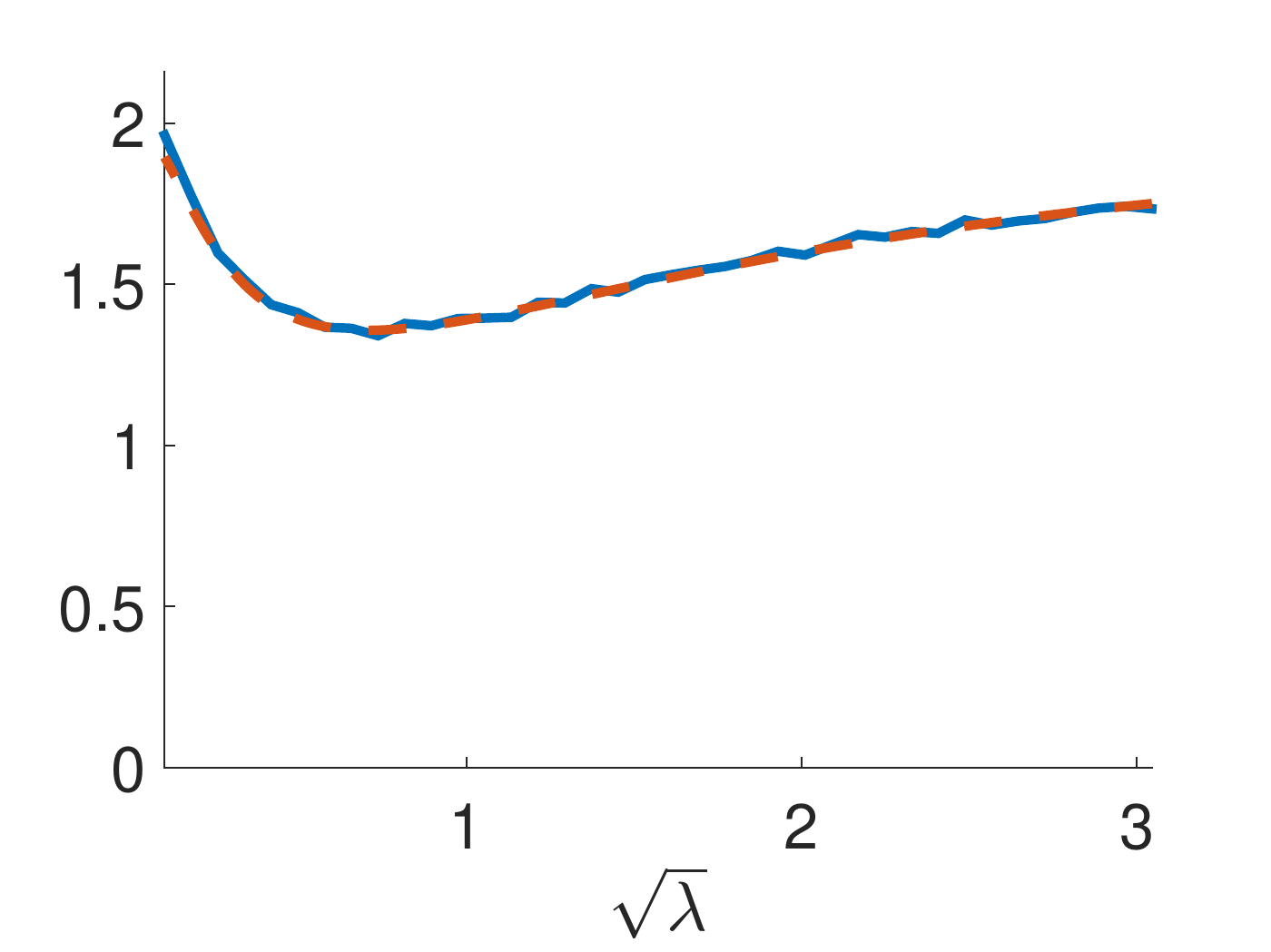} &
\includegraphics[width=\FW, trim = \TRA mm \TRB mm \TRC mm \TRD mm, clip = TRUE]{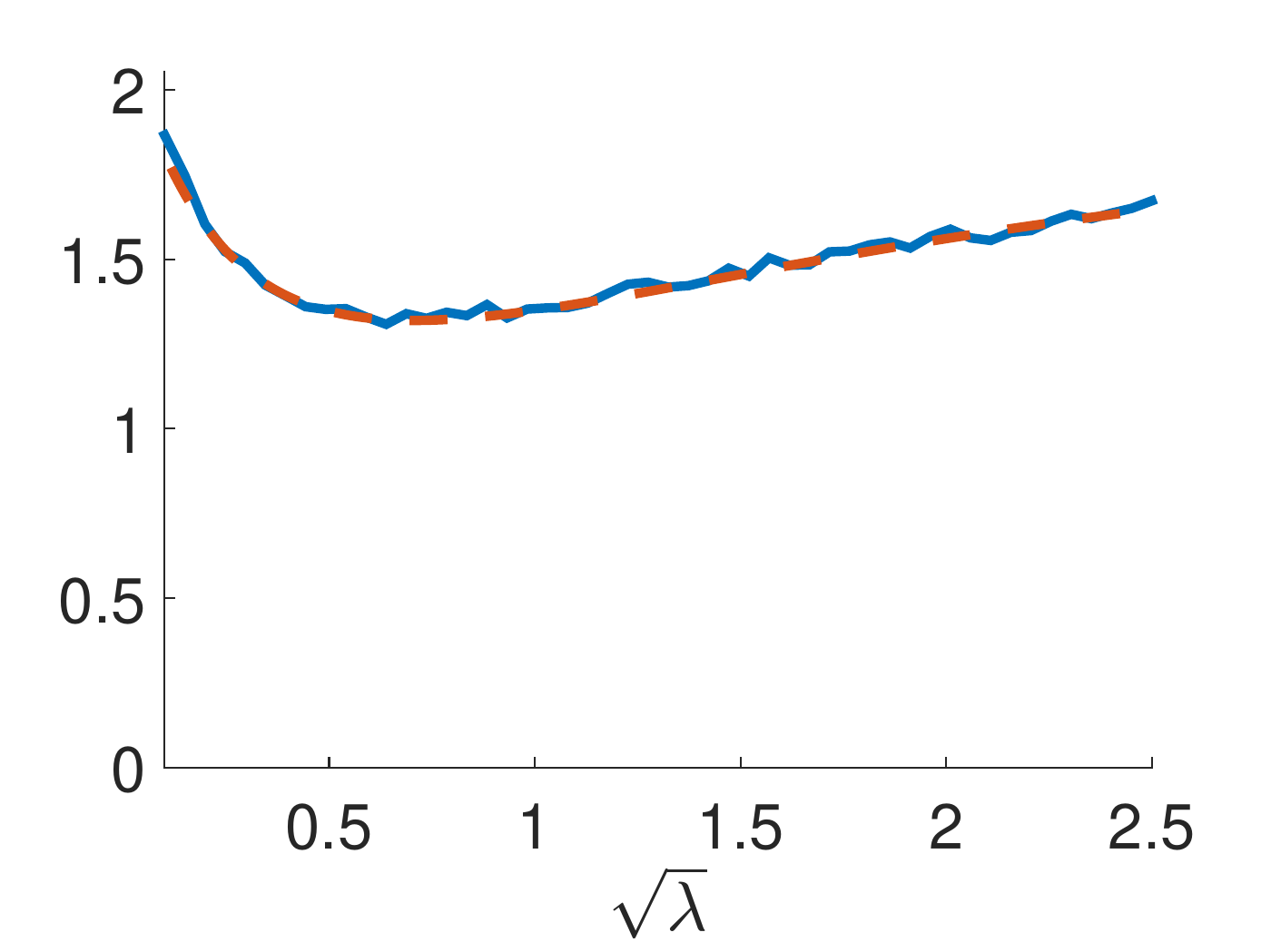} \\
{\begin{sideways}\parbox{\PBW\columnwidth}{\centering $\gamma = 1$}\end{sideways}} &
\includegraphics[width=\FW, trim = \TRA mm \TRB mm \TRC mm \TRD mm, clip = TRUE]{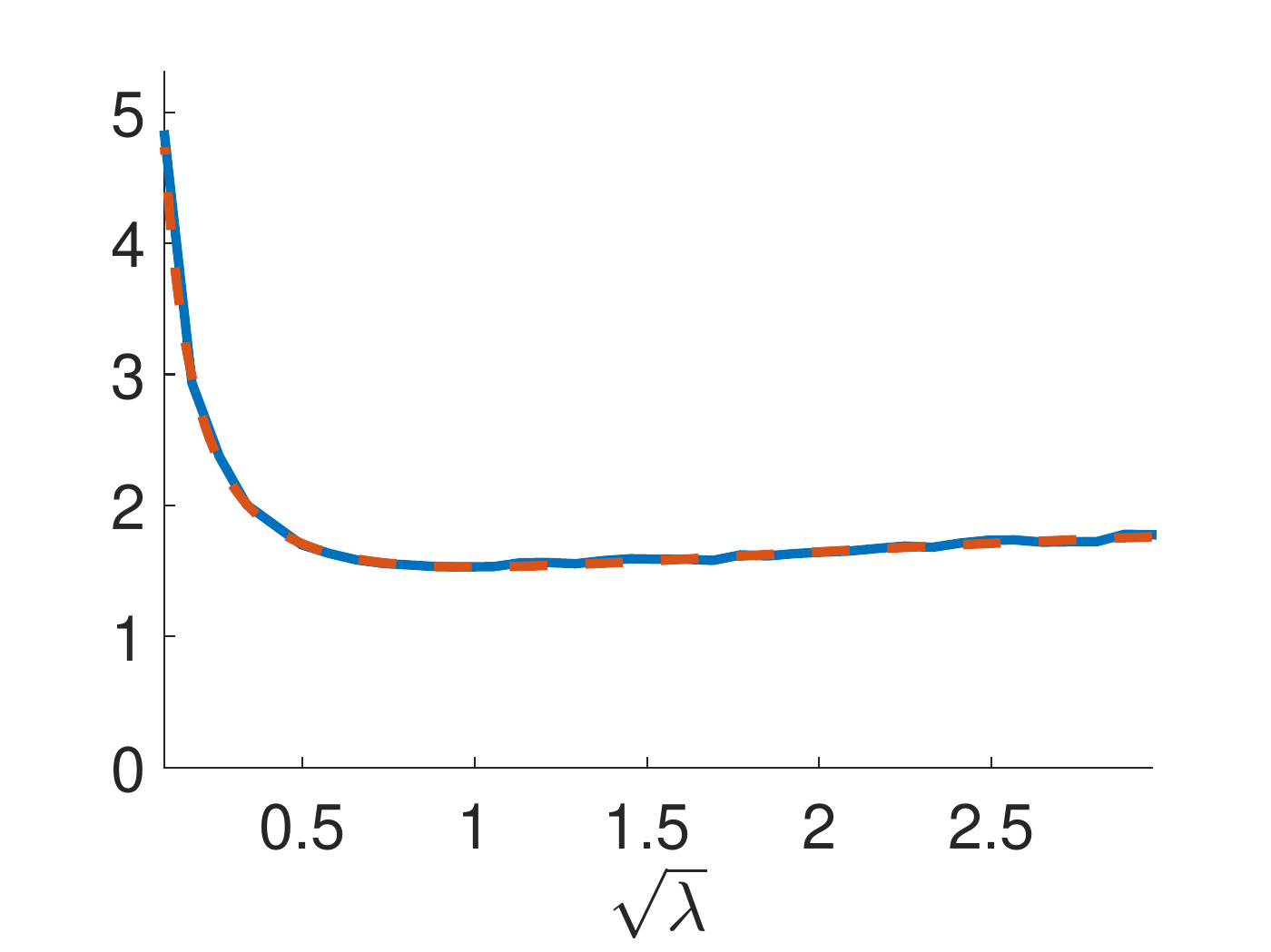} &
\includegraphics[width=\FW, trim = \TRA mm \TRB mm \TRC mm \TRD mm, clip = TRUE]{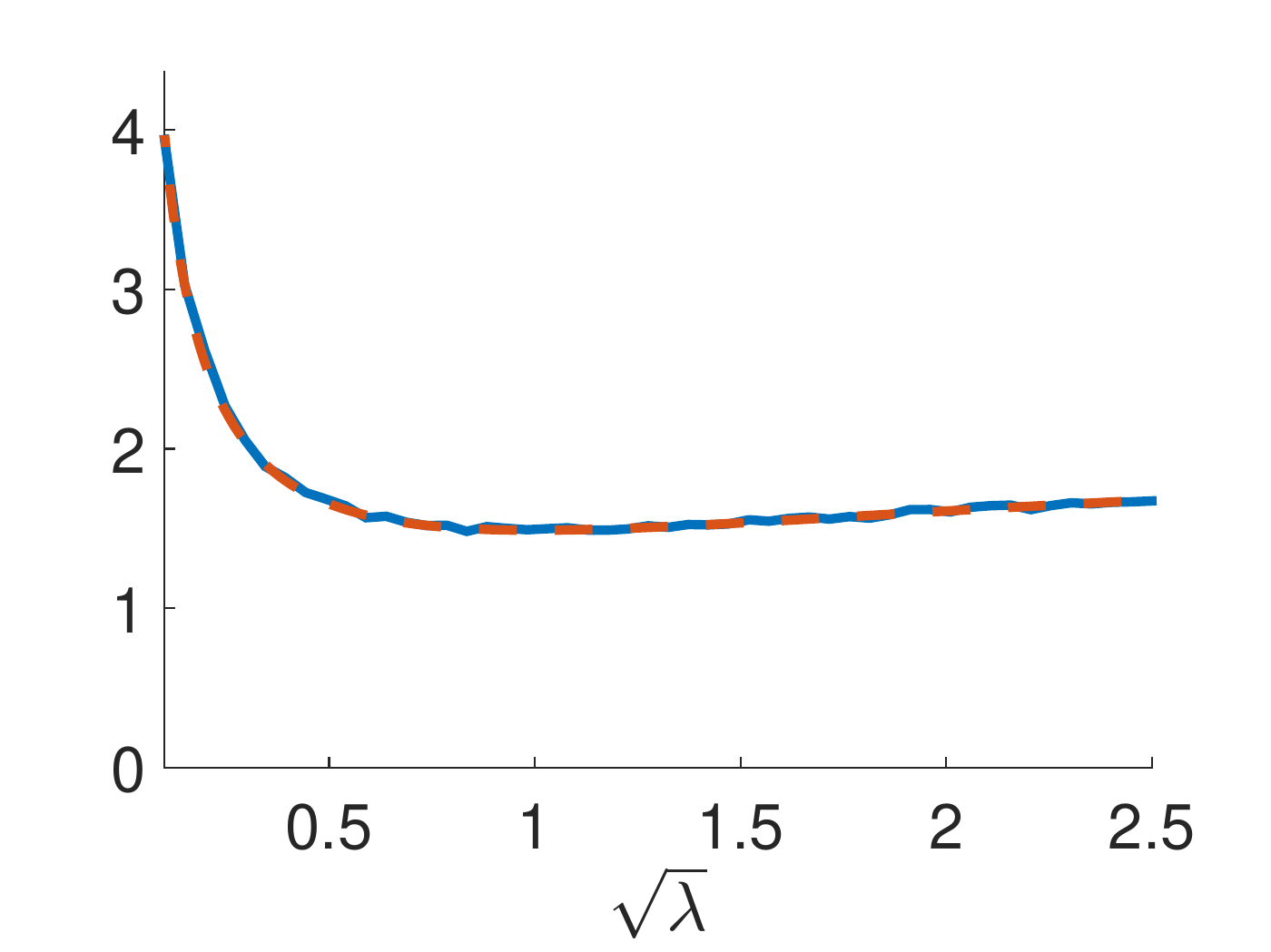}  \\
{\begin{sideways}\parbox{\PBW\columnwidth}{\centering $\gamma = 2$}\end{sideways}} &
\includegraphics[width=\FW, trim = \TRA mm \TRB mm \TRC mm \TRD mm, clip = TRUE]{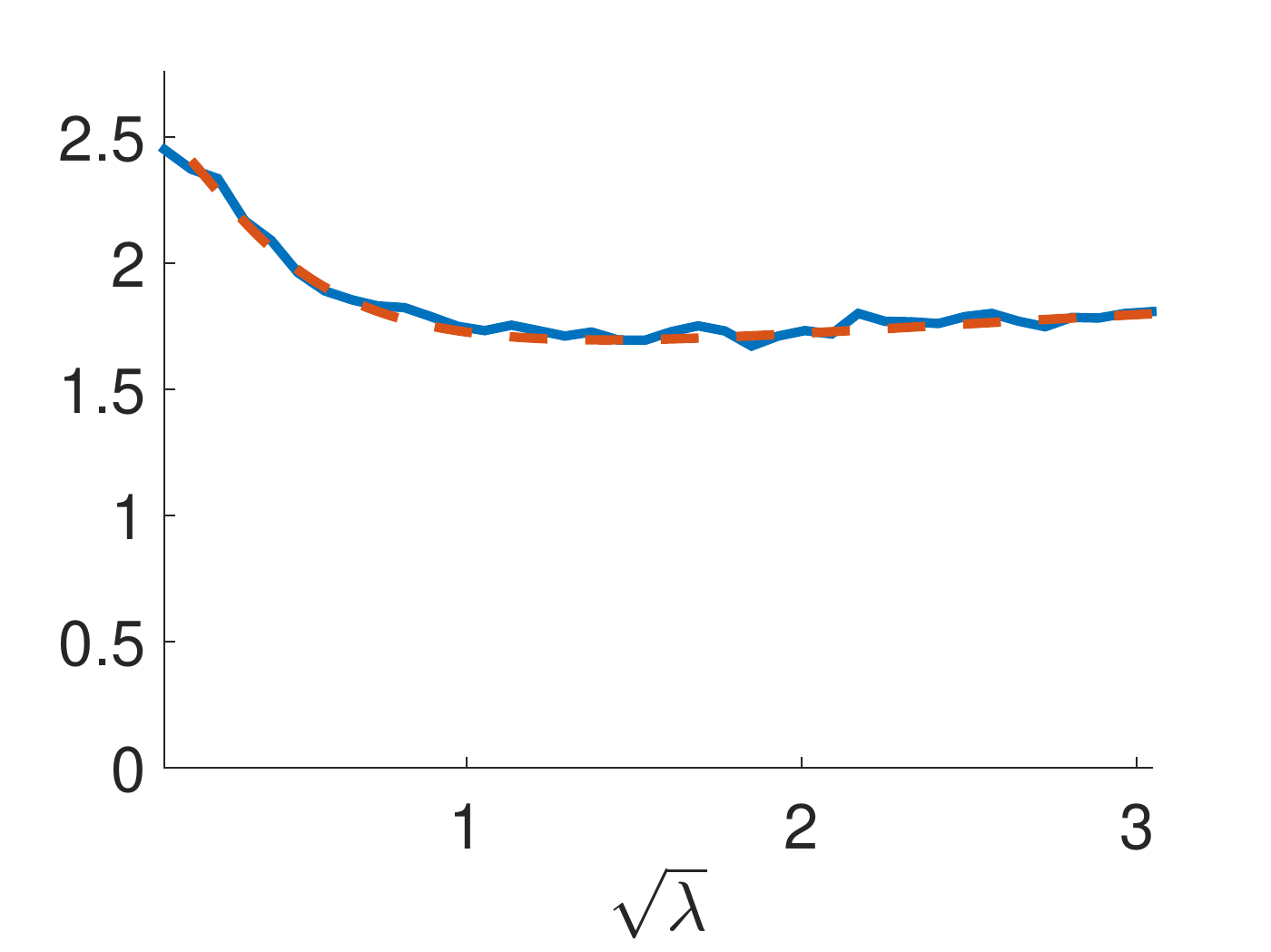} &
\includegraphics[width=\FW, trim = \TRA mm \TRB mm \TRC mm \TRD mm, clip = TRUE]{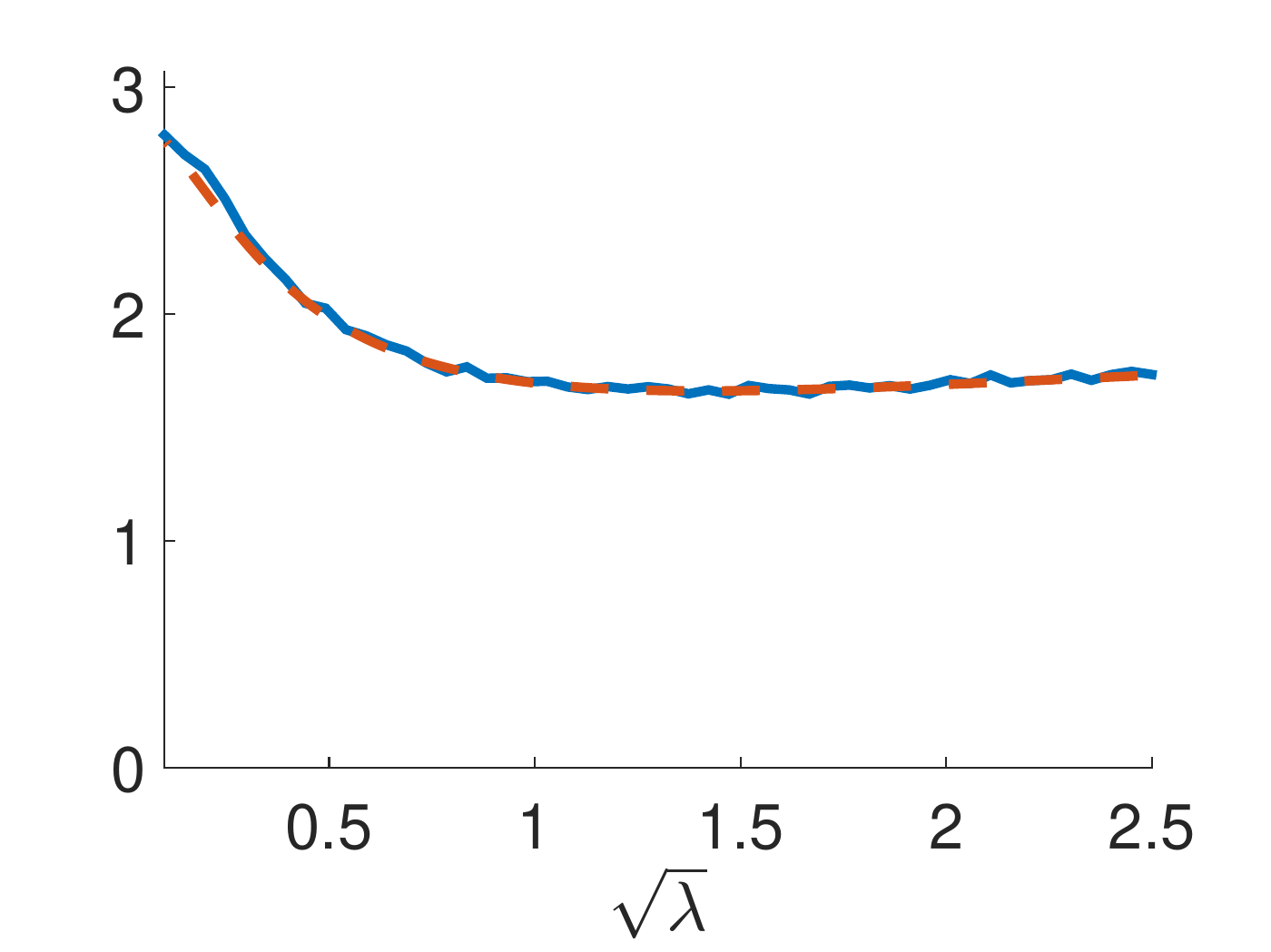} \\
\end{tabular}
\caption{Prediction error of ridge regression in the {\tt BinaryTree} and {\tt Exponential} model. The theoretical formula (red, dashed) is overlaid with the results from simulations (blue, solid).
The signals are drawn from $\smash{w \sim \nn\p{0, p^{-1} I_{p \times p}}}$.  For {\tt BinaryTree}, we train on $n = \gamma^{-1} p$ samples, where $p=2^4$; for {\tt Exponential} on $n=20$. We take 100 instances of random training data sets, and for each we test on 500 samples. We report the average test error over all 50,000 test cases. }
\label{fig:second}
\end{figure}

Intriguingly, Figure \ref{fig:second} shows that the prediction performance of ridge regression is very similar on the two problems. This presents a marked contrast to the RDA example given in the introduction, where the two covariance structures led to very different classification performance.
Thus, it appears that the loss function and the spectrum of $\Sigma$ can interact non-trivially. 

Meanwhile, in the special case of identity covariance, the quantity $R_\lambda-1$ coincides with the normalized estimation error, so we recover known results described in, e.g., \citet{tulino2004random}. When $\smash{\Sigma = I_{p \times p}}$, we have an explicit expression for the Stieltjes transform \citep[e.g.,][p. 52]{bai2010spectral}, valid for $\lambda>0$:
\begin{equation}
m_I(-\lambda; \gamma) = \frac{-(1 - \gamma  + \lambda) + \sqrt{(1 - \gamma + \lambda)^2 + 4\gamma  \lambda}}{2 \gamma \lambda}.
\label{identity_stieltjes}
\end{equation}
Theorem \ref{theo:ridge} implies that the limit predictive risk of ridge regression for general $\lambda$ equals
\begin{equation*}
\label{eq:ridge_identity1}
R_\lambda(\alpha^2,\gamma) = 1 + \gamma m_I(-\lambda; \gamma) + \lambda \left( \lambda \alpha^2 -\gamma \right)  m'_I(-\lambda; \gamma),
\end{equation*}
which has an explicit form. Furthermore, the optimal risk has a particularly simple form: 
\begin{equation}
 R^*(\alpha^2,\gamma)  = \frac{1}{2}\left[ 1 + \frac{\gamma-1}{\gamma}\alpha^2 + \sqrt{\left(1 - \frac{\gamma-1}{\gamma}\alpha^2\right)^2 + 4 \alpha^2}\right].
\label{eq:closed_form}
\end{equation}
See the supplement for details on these derivations.

\subsection{Regimes of Learning}
\label{regimes_of_learn:ridge}

As an application of Theorem \ref{theo:ridge}, we study how the difficulty of ridge regression depends on the signal strength $\alpha^2$. \citet{liang2010interaction} call this the {\it regimes of learning} problem and argue that, for small $\alpha^2$ the complexity of ridge regression should be tightly characterized by dimension-independent Rademacher bounds, while for large $\alpha^2$ the error rate should only depend on $\gamma$. \citet{liang2010interaction} justify their claims using generalization bounds for the identity-covariance case $\Sigma = I_{p \times p}$, and conjecture that similar relationships should hold in general. Using our results, we can give a precise characterization of the regimes of learning of optimally-regularized ridge regression with general covariance $\Sigma$.

From Theorem \ref{theo:ridge}, we know that given a signal strength $\alpha^2$, the asymptotically optimal choice for $\lambda$ is $\smash{\lambda^*(\alpha) = \gamma / \alpha^2}$, in which case the predictive risk of ridge regression converges to
\begin{equation*}
R^*(H,\alpha^2,\gamma) = \frac{1}{\lambda^* v(-\lambda^*)}.
\end{equation*}
We now use this formula to examine the two limiting behaviors of the risk, for weak and strong signals. The results below are proved in the supplement, assuming that the population spectral distribution $H$ is supported on a set bounded away from 0 and infinity. 

The weak-signal limit is relatively simple. First, $\lim_{\alpha^2 \rightarrow 0} R^*(H,\alpha^2,\gamma) = 1$, reflecting that for a small signal, we predict a near-zero outcome due to a large regularization. Second, $\smash{\lim_{\alpha^2 \rightarrow 0} (R^*(H,\alpha^2,\gamma) - 1)/\alpha^2 = \mathbb{E}_HT}$, where $\mathbb{E}_HT$ is the large-sample limit of the normalized traces $p^{-1}\tr\Sigma$. Therefore, for small $\alpha$, the difficulty of the prediction is determined to first order by the average eigenvalue, or equivalently by the average variance of the features, and does not depend on the size of the ratio $\gamma = p/n$.

Conversely, the strong-signal limiting behavior of the risk depends the aspect ratio $\gamma$, and experiences a phase transition at $\gamma = 1$.
When $\gamma < 1$, the predictive risk converges to
\begin{equation*}
\label{eq:gamma_small_regime}
\lim_{\alpha^2 \rightarrow \infty} R^*(H,\alpha^2,\gamma)  = \frac{1}{1 - \gamma}
\end{equation*}
regardless of $\Sigma$. This quantity is known to be the $n,\,p \rightarrow \infty$, $p/n \to \gamma$ limit of the {risk of ordinary least squares} (OLS).
Thus when $p < n$ and we have a very strong signal, ridge regression cannot outperform OLS, although of course it can do much better with a small $\alpha$.

When $\gamma > 1$, the risk $R^*(H,\alpha^2,\gamma) $ can grow unboundedly large with $\alpha$. Moreover, we can verify that
\begin{equation}
\label{eq:gamma_big_regime}
\lim_{\alpha^2 \rightarrow \infty} \alpha^{-2} R^*(H,\alpha^2,\gamma) = \frac{1}{\gamma \, v(0)} \geq 0.
\end{equation}
Thus, the limiting error rate depends on the covariance matrix through $v(0)$.
In general there is no closed-form expression for $v(0)$, which is instead characterized as the unique $c>0$ for which 
$$\frac{1}{\gamma}=\int_{t=0}^{\infty}\frac{tc }{1+tc}\,dH(t).$$
In the special case $\Sigma = I_{p \times p}$, however, the limiting expression simplifies to $\smash{1/(\gamma v(0))}$ =  $\smash{(\gamma - 1) / \gamma}$.
In other words, when $p > n$, optimally tuned ridge regression can capture a {constant fraction of the signal}, and its test-set fraction of explained variance tends to $\gamma^{-1}$.

Finally, in the threshold case $\gamma = 1$, the risk  $R^*(H,\alpha^2,\gamma)$ scales with $\alpha$:
\begin{equation}
\label{eq:gamma_1_regime}
\lim_{\alpha^2 \rightarrow \infty} \alpha^{-1} R^*(H,\alpha^2,\gamma) = \frac{1}{\sqrt{\EE[H]{T^{-1}}}},
\end{equation}
where  $\EE[H]{T^{-1}}$ is the large-sample limit of $p^{-1}\tr\p{\Sigma^{-1}}$.
Thus, the absolute risk $R^*$ diverges to infinity, but the normalized error $\alpha^{-2} R^*(H,\alpha^2,\gamma)$ goes to 0. This appears to be a rather unusual risk profile.
In the case $\Sigma = I_{p \times p}$, our expression simplifies further and we get the finite-$\alpha$ formula
$R^*\p{\alpha^2, \, 1}  =  (\sqrt{4 \alpha^2 + 1} + 1)/2$,
which scales like $\alpha$.

\begin{figure}[t]
\centering
\includegraphics[width = \BIGFIG\textwidth]{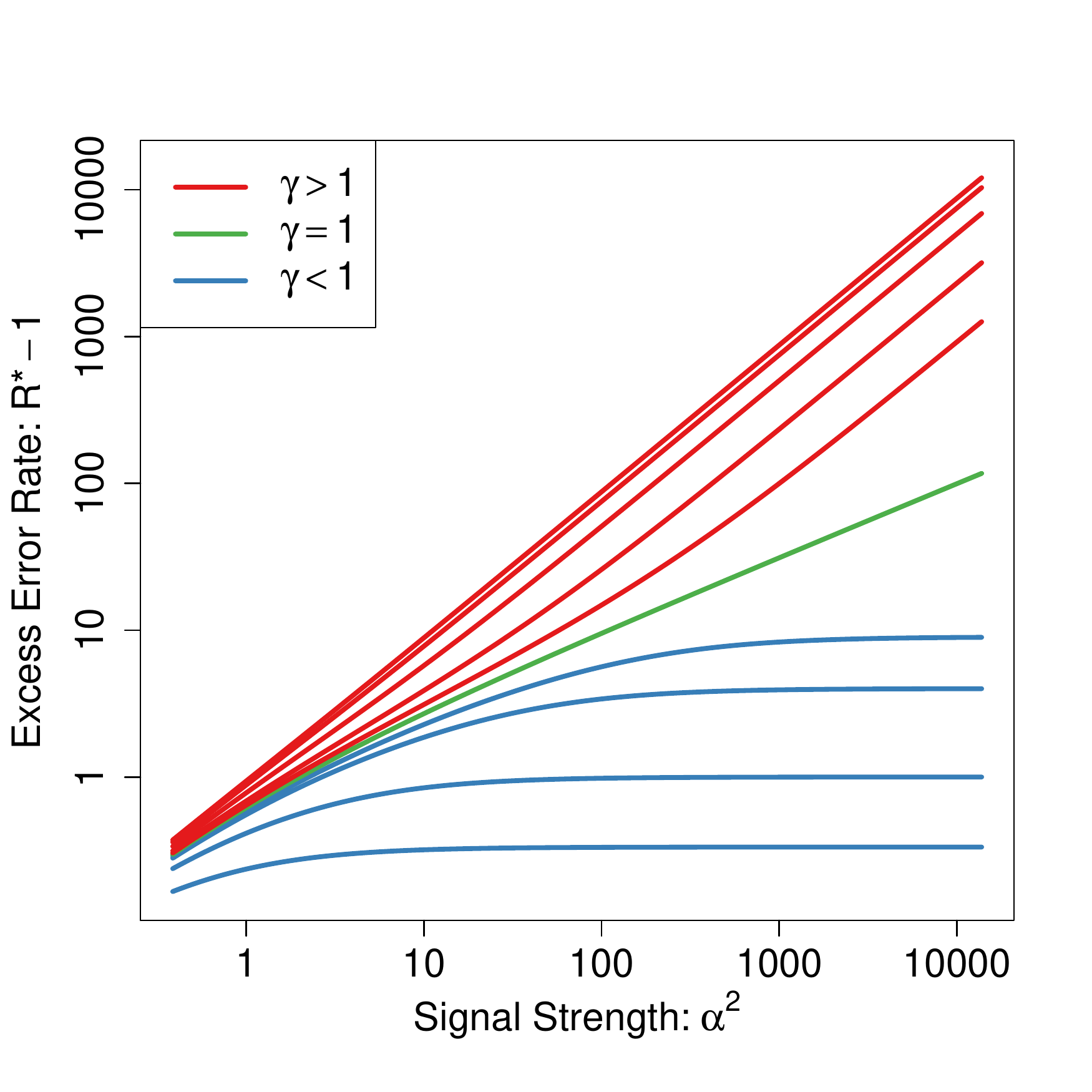}
\caption{Phase transition for predictive risk of ridge regression with identity covariance $\Sigma = I_{p \times p}$. Error rates are plotted for $\gamma = $ 0.25, 0.5, 0.8, 0.9, 1, 1.1, 1.3, 2, 4, and 8.}
\label{fig:ridge}
\end{figure}

In summary, we find that for general covariance $\Sigma$, the strong-signal risk $\smash{R^*(\alpha^2, \, \gamma)}$ scales as $\smash{\Theta(1)}$ if $\gamma < 1$, as $\smash{\Theta(\alpha)}$ if $\gamma = 1$, and as $\smash{\Theta(\alpha^2)}$ if $\gamma > 1$.
We illustrate this phenomenon in Figure \ref{fig:ridge}, in the case of the identity covariance $\Sigma = I_{p \times p}$. We see that when $\gamma < 1$ the error rate stabilizes, whereas when $\gamma > 1$, the error rate eventually gets a slope of 1 on the log-log scale. Finally, when $\gamma = 1$, the error rate has a log-log slope of $1/2$.

Thus, thanks to Theorem \ref{theo:ridge}, we can derive a complete and exact answer the regimes of learning question posed by \citet{liang2010interaction} in the case of ridge regression.
The results \eqref{eq:gamma_big_regime} and \eqref{eq:gamma_1_regime} not only show that the scalings found by  \citet{liang2010interaction} with $\smash{\Sigma = I_{p \times p}}$ hold for arbitrary $\Sigma$, but make explicit how the slopes depend on the limiting population spectral distribution.
The ease with which we were able to read off this scaling from Theorem \ref{theo:ridge} attests to the power of the random matrix approach.

\subsection{An Inaccuracy Principle for Ridge Regression}
\label{sec:pred-estimation}

Our results also reveal an intriguing inverse relationship between the prediction and estimation errors of ridge regression.
Specifically, denoting the mean-squared estimation error as $R_{E,n}(\lambda)=\EE{\lVert \hw_{\lambda}-w^* \rVert^2}$, it is well known that optimally tuned ridge regression satisfies, under the conditions of Theorem \ref{theo:ridge}, 
$$R_{E,n}(\lambda^*) \to_{a.s.} R_{E}  := \gamma \, m\p{-\lambda^*} \eqfor \lambda^* = \gamma \alpha^{-2},$$
where $m$ is the Stieltjes transform of the limiting empirical spectral distribution \citep[see, e.g.,][Chapter 3]{tulino2004random}.
Combining this result with Theorem \ref{theo:ridge} and \eqref{dual.ST}, we find the following relationship between the limiting predictive risk $R_P$ and the limiting estimation risk $R_E$.

\begin{corollary}
Under the conditions of Theorem \ref{theo:ridge}, the asymptotic predictive and estimation risks of optimally-tuned ridge regression are inversely related,
$$1 - \frac{1}{R_P} = \gamma \left(1 - \frac{R_E}{\alpha^2} \right).$$
\end{corollary}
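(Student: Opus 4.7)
The plan is to verify the claimed identity by direct substitution using the two formulas $R_P = 1/(\lambda^* v(-\lambda^*))$ and $R_E = \gamma\, m(-\lambda^*)$, together with the companion Stieltjes transform identity \eqref{dual.ST}. The key arithmetic fact to exploit is the specific choice of the optimal tuning parameter $\lambda^* = \gamma/\alpha^2$, which ties together the factors $\gamma/\alpha^2$ appearing on the two sides of the target equation.

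First, I would rewrite the left-hand side of the claim using Theorem \ref{theo:ridge}:
\begin{equation*}
1 - \frac{1}{R_P} \;=\; 1 - \lambda^* \, v(-\lambda^*).
\end{equation*}
Next, I would rewrite the right-hand side using the given expression for $R_E$ and the identity $\gamma/\alpha^2 = \lambda^*$:
\begin{equation*}
\gamma\left(1 - \frac{R_E}{\alpha^2}\right) \;=\; \gamma - \frac{\gamma^2}{\alpha^2}\,m(-\lambda^*) \;=\; \gamma - \gamma\,\lambda^*\,m(-\lambda^*).
\end{equation*}

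The core step is then to apply the companion Stieltjes transform relation \eqref{dual.ST} at $z = -\lambda^*$, namely
\begin{equation*}
\gamma\!\left(m(-\lambda^*) - \tfrac{1}{\lambda^*}\right) \;=\; v(-\lambda^*) - \tfrac{1}{\lambda^*},
\end{equation*}
which, after multiplying through by $\lambda^*$, rearranges to $\gamma\,\lambda^*\,m(-\lambda^*) = \lambda^*\,v(-\lambda^*) + \gamma - 1$. Substituting this into the preceding expression gives $\gamma - \gamma\,\lambda^*\,m(-\lambda^*) = 1 - \lambda^*\,v(-\lambda^*)$, matching the computation of $1 - 1/R_P$ and completing the proof.

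There is no substantive obstacle here: once the two asymptotic formulas are in hand and the companion Stieltjes identity is available, the result reduces to one algebraic line whose only nontrivial ingredient is the cancellation enabled by $\lambda^* = \gamma/\alpha^2$. The value of the corollary lies not in the proof but in the clean inverse form it reveals: the same quantity $\lambda^*\,v(-\lambda^*)$ controls both $R_P$ and $R_E$ at optimal tuning, so improving one necessarily worsens the other.
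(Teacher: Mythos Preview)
Your argument is correct and is exactly the derivation the paper has in mind: it says the corollary follows by ``combining this result with Theorem \ref{theo:ridge} and \eqref{dual.ST},'' and you have filled in precisely those steps, using $R_P = 1/(\lambda^* v(-\lambda^*))$, $R_E = \gamma\, m(-\lambda^*)$, the identity $\lambda^* = \gamma/\alpha^2$, and the companion relation \eqref{dual.ST} evaluated at $z=-\lambda^*$.
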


The equation holds for all limit eigenvalue distributions $H$ of the covariance matrices $\Sigma$. Both sides of the above equation are non-negative: $R_P$ cannot fall below the intrinsic noise level $\Var{Y \cond X} = 1$, while $R_E \leq \limsup_{p\to\infty} R_{E, \, n} (\lambda^*) \leq \limsup_{p\to\infty} R_{E, \, n}(0) = \alpha^2$.
When $\gamma = 1$, we get the even simpler equation
$$ R_{E} \, R_{P}  = \alpha^2. $$
The product of the estimation and prediction risks equals the signal strength. 
Since this holds for the optimal $\lambda^*$, it also implies that for any $\lambda$ we have the lower bound $R_{E}(\lambda) \cdot R_{P}(\lambda)  \ge \alpha^2$; we find the explicit formula relating the two risks remarkable. The inverse relationship may be somewhat surprising, but it has an intuitive explanation.\footnote{A similar heuristic was given by \citet{liang2010interaction}, without theoretical justification.}
When the features are highly correlated and $v$ is correspondingly large, prediction is easy because $y$ lies close to the ``small'' column space of the feature matrix $X$, but estimation of $w$ is hard due to multi-collinearity.
As correlation decreases, prediction gets harder but estimation gets easier.

\subsection{Related Work for High-Dimensional Ridge Regression}
\label{sec:ridge_review}

Random-design ridge regression in high dimensions is a thoroughly studied topic. In particular,  \citet{karoui2013asymptotic} and \citet{dicker2014ridge} study ridge regression with identity covariance $\Sigma = I_{p \times p}$ in an asymptotic framework similar to ours; this special case is considerably more restrictive than a general covariance.
The study of the estimation error $\smash{\mathbb{E}\big[\Norm{\hw_\lambda - w}^2\big]}$ of ridge regression has received substantial attention in the wireless communication literature; see, e.g., \citet{couillet2011random} and \citet{ tulino2004random} for references. To our knowledge, however, that literature has not addressed the behavior of prediction error.
Finally, we also note the work of \citet{hsu2014random}, who provide finite-sample concentration inequalities on the out-of-sample prediction error of random-design ridge regression, without obtaining limiting formulas. In contrast to these results, we give explicit limiting formulas for the prediction error.

\section{Regularized Discriminant Analysis}
\label{sec:twoclass}

In the second part of the paper, we return to regularized discriminant analysis and the two-class Gaussian discrimination problem \eqref{eq:setup}. For simplicity we will first discuss the case when the population label proportions are balanced. 
In this case, the Bayes oracle predicts using \citep{anderson1958introduction}
\begin{align*}
&\hy(x) = \sign\p{\delta^\top \Sigma^{-1} \p{x - \frac{\mu_{-1} + \mu_{+ 1}}{2}}}  \with \delta = \frac{\mu_{+1} - \mu_{-1}}{2},  
\end{align*}
and has an error rate \smash{$\text{Err}_\text{Bayes} = \Phi\p{-\Delta_{n,p}}$},
where \smash{$\Delta_{n,p} = \sqrt{\delta^\top \Sigma^{-1} \delta}$}
is half the between-class Mahalanobis distance.
The Gaussian classification problem has a rich history, going back to Fisher's pioneering work on linear discriminant analysis (LDA). When we have the same number of examples from both the positive and negative classes, i.e., $\smash{n_{-1} = n_{+1} = n/2}$, LDA classifies using the linear rule
\begin{align*}
&\hy = \sign\p{\hdelta^\top \hSigma_c^{-1} \p{x - \frac{\hmu_{-1} + \hmu_{+1}}{2}}}, \where \\
&\hdelta = \frac{\hmu_{+1} - \hmu_{-1}}{2}, \
\hSigma_c = \frac{1}{n - 2} \sum_{i = 1}^n \p{x_i - \hmu_{y_i}}^{\otimes 2}, \eqand
\hmu_{\pm 1} = \frac{2}{n} \sum_{\cb{i : y_i = \pm 1}}  x_i.
\end{align*}
Here $\hSigma_c$ is the centered covariance matrix. In the low-dimensional case where $n$ gets large while $p$ remains fixed, LDA is the natural plug-in rule for Gaussian classification and efficiently converges to the Bayes discrimination function \citep{anderson1958introduction,efron1975efficiency}.
When $p$ is on the same order as $n$, however, the matrix inverse $\smash{\hSigma_c^{-1}}$ is unstable and the performance of LDA declines, as discussed among others by \cite{bickel2004some}.
Instead, we will study regularized discriminant analysis, the linear rule $\smash{\hy = h_{\hw_{\lambda}}\p{x - (\hmu_{-1} + \hmu_{+1})/{2}}}$ where $\smash{h_{w}(x) = \sign(w \cdot x)}$ and the weight vector is \footnote{The notation was chosen to emphasize the similarities between ridge regression and RDA. There will be no possibility for confusion with the ridge regression weight vector, also denoted $\hw_{\lambda}$.} $\smash{\hw_{\lambda} = (\hSigma_c + \lambda I_{p \times p})^{-1}\hdelta}$ \citep{friedman1989regularized, serdobolskii1983minimum}.

\subsection{High-Dimensional Asymptotics}

Throughout this section, we make a random-effects assumption about the class means, where the expected signal strength is $\EE{\|\delta\|_2^2}=\alpha^2$. 
We denote the classification error of RDA as $\Err\p{\hw_\lambda} = \PP{y \neq \sign\{\hw_\lambda \cdot (x-\hmu)\}}$. The probability is with respect to a independent test data point $(x,y)$ from the same distribution as the training data.

\begin{assumption}{C}\label{assume:C}(random weights in classification) The following conditions hold:
\begin{enumerate}
\item  $\mu_{-1}$ and $\mu_{+1}$ are randomly generated as $\mu_{-1} = \bmu - \delta$ and $\mu_{+1} = \bmu + \delta$, where $\delta$ has i.i.d. coordinates with
$$\EE{\delta_i} = 0, \ \Var{\delta_i} = \frac{\alpha^2}{p}, \eqand \EE{\delta_i^{4 + \eta}} \leq \frac{C}{p^{2 + \eta/2}} $$
for some fixed constants $\eta>0$ and $C$.
\item $\smash{\bmu = (\mu_{-1} + \mu_{+1})/2}$ is either fixed, or random and independent of $\delta$, $X$ and $y$, and satisfies $\limsup_{p\to\infty} \|\bmu\|^{2}_2/p^{1/2-\zeta} \leq C$ almost surely for some fixed constants $\zeta>0$ and $C$.
\end{enumerate}
\end{assumption}

\begin{theorem}
\label{theo:rda}
Under parts 2 and 3 of Assumption \ref{assume:A}, and Assumption \ref{assume:C}, suppose moreover that that the eigenvalues of $\Sigma$ are uniformly bounded as $0 < b < \lambda_{\min}\p{\Sigma} \leq \lambda_{\max}\p{\Sigma} \leq B$ for some fixed constants $b$ and $B$.
Finally, suppose that we have a balanced population $\PP{y = 1} = 1/2$ and equal class sizes $n_{-1}=n_{+1}$.
Then, the classification error of RDA converges almost surely
\begin{equation*}
\label{eq:rda_risk}
 \Err\p{\hw_\lambda} \rightarrow_{a.s.} \Phi\p{ - \Theta(\lambda)}, \ \where \ \Theta(\lambda) = \frac{\alpha^2 \, \tau\p{\lambda} }{\sqrt{\alpha^2 \, \eta\p{\lambda} + \xi\p{\lambda}}}
\end{equation*}
and $\tau$, $\eta$, and $\xi$ are determined by the problem parameters $H$ and $\gamma$:
\begin{equation*}
\label{eq:wlambda}
\tau\p{\lambda}  = \lambda m v, \,\,\,\,
\eta\p{\lambda}  = \frac{v-\lambda v'}{\gamma}, \,\,\,\,
\xi\p{\lambda} =\frac{v'}{v^2}-1 .
\end{equation*}
Here, $m=m(-\lambda)$ is the Stieltjes transform of the limit empirical spectral distribution $F$ of the covariance matrix $\smash{\hSigma_c}$, and $v=v(-\lambda)$ is the companion Stieltjes transform defined in \eqref{dual.ST}.
\end{theorem}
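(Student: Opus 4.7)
The plan is to analyze the classification error conditionally on the training data and show it concentrates to $\Phi(-\Theta(\lambda))$. A fresh test example satisfies $x - \hmu = y\,\delta + (\bmu - \hmu) + z$ with $z \sim \nn(0, \Sigma)$, so the discriminant $\hw_\lambda \cdot (x - \hmu)$ is conditionally Gaussian with mean $y\,\hw_\lambda^\top \delta - \hw_\lambda^\top (\hmu - \bmu)$ and variance $\hw_\lambda^\top \Sigma \hw_\lambda$. By the symmetry of the balanced problem, $\Err(\hw_\lambda)$ reduces to $\Phi(-\Theta_n)$, where
\[
\Theta_n := \frac{\hw_\lambda^\top \delta}{\sqrt{\hw_\lambda^\top \Sigma \hw_\lambda}}
\]
modulo a perturbation from $\hmu - \bmu$. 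The objective is to show $\Theta_n \to_{a.s.} \Theta(\lambda)$.

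To decouple the class-mean noise from the within-class scatter, I would write $\hdelta = \delta + e$ with $e := \hdelta - \delta$. Under the balanced-Gaussian model, Fisher--Cochran gives $e \sim \nn(0, \Sigma/n)$, independent of $\hSigma_c$ and of $\delta$. Setting $K := (\hSigma_c + \lambda I_p)^{-1}$,
\begin{align*}
\hw_\lambda^\top \delta &= \delta^\top K \delta + \delta^\top K e, \\
\hw_\lambda^\top \Sigma \hw_\lambda &= \delta^\top K \Sigma K \delta + 2\,\delta^\top K \Sigma K e + e^\top K \Sigma K e.
\end{align*}
Hanson--Wright-style quadratic form concentration, using the $(4+\eta)$-moment bound on the $\delta_i$ from Assumption C, the Gaussianity of $e$, and $\|K\|_{op} \le \lambda^{-1}$, then shows that the diagonal quadratic forms concentrate around their conditional means while the cross terms vanish almost surely. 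The perturbation $\hw_\lambda^\top(\hmu - \bmu)$ is controlled similarly using $\hmu - \bmu \sim \nn(0, \Sigma/n)$ together with Assumption C's tail bound on $\|\bmu\|$.

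This reduces the problem to the almost-sure limits of three normalized traces: $p^{-1}\tr(K) \to m(-\lambda)$ by \eqref{eq:mp_lemma}; $p^{-1}\tr(\Sigma K^2) \to (v - \lambda v')/(\gamma \lambda^2 v^2)$ by differentiating the Ledoit--P\'ech\'e formula \eqref{eq:ledoit_peche} in $\lambda$ (justified by locally uniform convergence of Stieltjes-type functionals via Vitali's theorem); and $p^{-1}\tr((\Sigma K)^2) \to (v' - v^2)/(\gamma \lambda^2 v^4)$. A direct calculation then yields
\[
\Theta_n \to_{a.s.} \frac{\alpha^2 \cdot \lambda m v}{\sqrt{\alpha^2 (v - \lambda v')/\gamma + (v' - v^2)/v^2}} = \frac{\alpha^2\,\tau(\lambda)}{\sqrt{\alpha^2\,\eta(\lambda) + \xi(\lambda)}},
\]
and continuity of $\Phi$ delivers the stated almost-sure convergence of $\Err(\hw_\lambda)$.

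The main obstacle is the third trace limit, $p^{-1}\tr((\Sigma K)^2)$, which involves $\Sigma$ and $\hSigma_c$ in an alternating product and is not directly covered by \citet{ledoit2011eigenvectors} or standard random matrix theory. I would approach it by studying the two-parameter generating function $F(\lambda_1, \lambda_2) := p^{-1}\tr\!\left(\Sigma (\hSigma_c + \lambda_1 I_p)^{-1} \Sigma (\hSigma_c + \lambda_2 I_p)^{-1}\right)$ and using resolvent identities to reduce its limit to quantities already controlled by \eqref{eq:ledoit_peche} and its $\lambda$-derivative, then specializing $\lambda_1 = \lambda_2 = \lambda$. This is precisely the new explicit trace formula that the authors advertise as a contribution, so the heart of the proof lies in establishing it cleanly.
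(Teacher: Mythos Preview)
Your overall architecture matches the paper's almost exactly: reduce to $\Phi(-\Theta_n)$ via the conditional Gaussianity of the discriminant, split $\hdelta = \delta + e$ with $e \sim \nn(0,\Sigma/n)$ independent of $\hSigma_c$, concentrate the quadratic forms, and identify the limits of the three traces $p^{-1}\tr K$, $p^{-1}\tr(\Sigma K^2)$, and $p^{-1}\tr((\Sigma K)^2)$. The first two trace limits are handled in the paper just as you describe (Marchenko--Pastur for the first; Ledoit--P\'ech\'e plus the Vitali derivative trick for the second). One small slip: the tail bound on $\|\bmu\|$ from Assumption~C is irrelevant for $\hw_\lambda^\top(\hmu-\bmu)$, since $\hmu-\bmu = n^{-1/2}\Sigma^{1/2}W$ does not involve $\bmu$; in the paper that bound is used to kill terms like $\delta^\top K\bmu$, which your parametrization happens to avoid.

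The substantive divergence is the third trace. You propose to study $F(\lambda_1,\lambda_2)=p^{-1}\tr\bigl(\Sigma K_{\lambda_1}\Sigma K_{\lambda_2}\bigr)$ and reduce it via resolvent identities to the Ledoit--P\'ech\'e functional and its derivative. This is the weak link: the resolvent identity $K_1-K_2=(\lambda_2-\lambda_1)K_1K_2$ collapses products $K_1K_2$ but not the alternating product $K_1\Sigma K_2$, so it is not clear how your scheme would close. The paper does \emph{not} derive this limit from scratch; it imports it. Specifically, the paper invokes Lemma~2 of \citet{chen2011regularized} for the explicit form $p^{-1}\tr\bigl((\Sigma K)^2\bigr)\to (v'-v^2)/(\gamma\lambda^2 v^4)$, and Lemma~1 of \citet{hachem2008new} for the $O(n^{-2})$ variance bound that upgrades convergence in probability to almost sure. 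So the ``new explicit trace formula'' you flag as the heart of the proof is, in this paper, a citation rather than a derivation; your proposed route would be a genuine alternative if it can be made to work, but as stated it is a plan with a gap rather than a proof.
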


The proof of Theorem \ref{theo:rda} (in the supplement) is similar to Theorem \ref{theo:ridge}, but more involved. The main difficulty is to evaluate explicitly the limits of certain functionals of the population and sample covariance matrices. We extend the result of \citet{ledoit2011eigenvectors}, and rely on additional results and ideas from \citet{hachem2007deterministic} and \citet{chen2011regularized}. In particular, we use a derivative trick for Stieltjes transforms, similar to that employed in a related context by \citet{karoui2011geometric}, \citet{rubio2012performance}, and \citet{zhang2013finite}. The limits are then combined with results on concentration of quadratic forms.

The above result can also be extended to RDA with uneven sampling proportions. Since the limit error rates get more verbose, this is the only place where we discuss uneven sampling. Suppose that the conditions of Theorem \ref{theo:rda} hold, except now we have unequal underlying class probabilities $\mathbb{P}(y_i=+1)=\pi_+$, $\mathbb{P}(y_i=-1)=\pi_-$, and our training set is comprised of $n_{\pm 1}$ samples with label $y_i= \pm 1$ such that $p/n_{\pm 1} \to \gamma_{\pm 1}>0$. We do not assume that $n_{-1} / (n_{-1} + n_{+1}) \rightarrow \pi_-$. 
Consider a general regularized classifier $\sign(\hat f_\lambda(x))$, where $\hat f_\lambda(x) = [x-(\hmu_{+1}+\hmu_{-1})/2]^\top  (\hSigma_c + \lambda I_{p \times p})^{-1} [\hmu_{+1}-\hmu_{-1}] + c$ for some $c \in \RR$, where $\hSigma_c$, $\hmu_{\pm1}$ are defined in the usual way. We prove in the supplement that 

\begin{theorem}
\label{theo:rda_unequal}
Under the conditions of Theorem \ref{theo:rda}, and with unequal sampling, the classification error of RDA converges almost surely:
\begin{equation}
\label{eq:unequal_sampling}
\mathbb{P}(\sign(\hat f_\lambda(x) \neq y)) \to_{a.s.}  \pi_- \Phi\p{-\Theta_-} + \pi_+ \Phi\p{-\Theta_+},
\end{equation}
where the effective classification margins have the form
\begin{align*}
&\Theta_{\pm} = \mp \frac{ \pm \alpha^2 m(-\lambda) + \frac{\gamma_{-1}-\gamma_{+1}}{4} \frac{1}{\gamma}\p{\frac{1}{\lambda v} - 1} + c }{\sqrt{Q}  }, \eqand \\
&Q = \alpha^2 \frac{v-\lambda v'}{\gamma (\lambda v)^2} + \frac{\gamma_{-1}+\gamma_{+1}}{4}  \frac{v'-v^2}{ \lambda^2 v^4}.
\end{align*}
\end{theorem}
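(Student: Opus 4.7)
The strategy is to mirror the argument of Theorem~\ref{theo:rda} while carefully tracking the asymmetries introduced by unequal class sizes. Condition on the training data and write $\hat A = (\hat\Sigma_c + \lambda I_{p\times p})^{-1}$, $\hat\Delta = \hat\mu_{+1}-\hat\mu_{-1}$, and $\hat M = (\hat\mu_{+1}+\hat\mu_{-1})/2$, so that $\hat f_\lambda(x) = (x-\hat M)^\top \hat A \hat\Delta + c$ is a linear function of the Gaussian test point. Given $y=k$, we have $x\sim\mathcal{N}(\mu_k,\Sigma)$ independently of the training data, so conditionally on the training data $\hat f_\lambda(x) \sim \mathcal{N}(M_k,V)$ with $M_k = (\mu_k-\hat M)^\top \hat A \hat\Delta + c$ and $V = \hat\Delta^\top \hat A \Sigma \hat A \hat\Delta$ (note that $V$ does not depend on $y$). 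The overall error then factorises as $\pi_+\Phi(-M_+/\sqrt V)+\pi_-\Phi(M_-/\sqrt V)$, so it suffices to extract the almost-sure limits of the two ratios.

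Using Assumption~C to write $\mu_{\pm 1}=\bar\mu\pm\delta$, and decomposing $\hat\mu_{\pm 1} = \mu_{\pm 1}+\bar z_\pm$ with $\bar z_\pm := n_{\pm 1}^{-1}\sum_{i:y_i=\pm 1}(x_i-\mu_{y_i})$, one obtains $\hat\Delta = 2\delta+(\bar z_+-\bar z_-)$ and $\mu_\pm-\hat M = \pm\delta-(\bar z_++\bar z_-)/2$ (the $\bar\mu$ cancels). Expanding $M_\pm$ and $V$ in these three pieces produces sums of quadratic and cross terms in $\delta, \bar z_+, \bar z_-$. The decisive distributional input is the Fisher--Cochran factorisation for the pooled sample covariance: $\hat\Sigma_c$, and hence $\hat A$, is independent of $(\bar z_+,\bar z_-)$, while $\bar z_\pm\sim\mathcal{N}(0,\Sigma/n_{\pm 1})$ are mutually independent and independent of $\delta$. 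Combining Hanson--Wright-style concentration of random quadratic forms (legitimised by the moment bounds in Assumption~C) with the Marchenko--Pastur and Ledoit--Péché limits \eqref{eq:mp_lemma} and \eqref{eq:ledoit_peche} yields
\begin{align*}
 \delta^\top\hat A\delta &\to_{a.s.} \alpha^2\,m(-\lambda),\\
 \bar z_\pm^\top\hat A\bar z_\pm &\to_{a.s.} \frac{\gamma_{\pm 1}}{\gamma}\!\left(\frac{1}{\lambda\,v(-\lambda)}-1\right),
\end{align*}
while all cross terms vanish almost surely. It is exactly the asymmetry $\gamma_{+1}\neq \gamma_{-1}$ that prevents the difference on the second line from cancelling, producing the new summand $\tfrac{\gamma_{-1}-\gamma_{+1}}{4\gamma}\bigl(\tfrac{1}{\lambda v}-1\bigr)$ in the numerator of $\Theta_\pm$.

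For the variance $V$, the $\delta$-quadratic form concentrates to $\alpha^2 \cdot \lim p^{-1}\tr(\Sigma\hat A^2)$, which I would obtain from the Ledoit--Péché formula by differentiating in $\lambda$ (via $\hat A^2=-\partial_\lambda\hat A$); this yields the $\alpha^2(v-\lambda v')/[\gamma(\lambda v)^2]$ contribution to $Q$. The $\bar z$-quadratic form concentrates to $(1/n_{+1}+1/n_{-1})\tr(\Sigma\hat A\Sigma\hat A)$, and the scalar factor $p(1/n_{+1}+1/n_{-1})\to \gamma_{+1}+\gamma_{-1}$ delivers the $(\gamma_{+1}+\gamma_{-1})/4$ coefficient that distinguishes the unbalanced from the balanced case. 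The main obstacle is the fourth-order trace $p^{-1}\tr(\Sigma\hat A\Sigma\hat A)$: this functional entangles $\Sigma$ and $\hat\Sigma_c$ in a way that is not reducible to any $\tr(\Sigma\hat A^k)$, so Marchenko--Pastur alone does not suffice. Its evaluation reuses the extension of \citet{ledoit2011eigenvectors} built on the deterministic-equivalent framework of \citet{hachem2007deterministic} and the Stieltjes-derivative trick of \citet{chen2011regularized}, \citet{rubio2012performance}, and \citet{zhang2013finite} that already underlies Theorem~\ref{theo:rda}. Once this limit is available, assembling $M_\pm$ and $V$ and invoking continuity of $\Phi$ delivers \eqref{eq:unequal_sampling}.
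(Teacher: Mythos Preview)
Your proposal is correct and follows essentially the same route as the paper: the same stochastic decomposition of $\hat\mu_{\pm 1}$ into signal plus independent Gaussian noise, the same use of the Fisher--Cochran independence between $\hat\Sigma_c$ and the class means, and the same three random-matrix inputs (Marchenko--Pastur for $\delta^\top\hat A\delta$, Ledoit--P\'ech\'e plus the derivative trick for $\tr(\Sigma\hat A^2)$, and the \citet{chen2011regularized}/\citet{hachem2007deterministic} result for $\tr(\Sigma\hat A\Sigma\hat A)$). The only differences are cosmetic: you work with $\hat\Delta=2\hat\delta$ rather than $\hat\delta$, and you parametrize the noise as $\bar z_\pm$ rather than $\Sigma^{1/2}Z_\pm/\sqrt{n_\pm}$, which introduces harmless overall factors of $2$ in $M_\pm$ and $\sqrt V$ that cancel in the ratio.
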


In this section we assumed Gaussianity, but by a Lindeberg--type argument it should be possible to extend the result to non-Gaussian observations with matching moments. It should also be interesting to study how sensitive the results are to the particular assumptions of our model, such as independence across samples.

It is worth mentioning that the regression and classification problems are very different statistically. In the random effects linear model, ridge regression is a linear Bayes estimator, thus the ridge regularization $\hSigma+\lambda I_{p\times p}$ of the covariance matrix is justified statistically. However, for classification, the ridge regularization is merely a heuristic to help with the ill-conditioned sample covariance. It is thus interesting to know how much this heuristic helps improve upon un-regularized LDA, and how close we get to the Bayes error. We now turn to this problem, which can be studied equivalently from a geometric perspective. 

\subsection{The Geometry of RDA}
\label{sec:rel_margin}

The asymptotics of RDA can be understood in terms of a simple picture. The angle between the Bayes decision boundary hyperplane and the RDA discriminating hyperplane tends to an asymptotically deterministic value in the metric of the covariance matrix, and the limiting risk of RDA can be described in terms of this angle.

Recall that, in the balanced $\pi_+ = \pi_-$ and $n_+ = n_-$ case, the estimated RDA weight vector is $\hw_{\lambda} = (\hSigma_c + \lambda I_{p\times p})^{-1} \hdelta$, while the Bayes weight vector is $w^* = \Sigma^{-1} \delta$. Writing $\langle a,b\rangle_\Sigma = a^\top \Sigma b$ for the inner product in the $\Sigma$-metric, the cosine of the angle---in the same metric---between the two is
\begin{equation*}
\cos_{\,\Sigma}(w,\hw_\lambda) = \frac{\left\langle \hw_{\lambda}, \, \Sigma^{-1} \delta\right\rangle_\Sigma}{\Norm{ \hw_{\lambda}}_\Sigma \, \Norm{ \Sigma^{-1} \delta}_\Sigma}  = \frac{\hw_{\lambda}^{\top} \delta}{\sqrt{\hw_{\lambda}^\top \Sigma \hw_{\lambda}} \sqrt{\delta^\top \Sigma^{-1} \delta}}
\end{equation*}
As discussed earlier, the Bayes error rate for the two-class Gaussian problem is $
\Err_{\text{Bayes}} = \Phi\p{-\Delta_{n,p}}$,  with  $\Delta_{n,p} = \sqrt{\delta^\top \Sigma^{-1} \delta} \rightarrow_{a.s.} \Delta = \alpha \sqrt{\mathbb{E}_H\left[T^{-1}\right]}$.\footnote{To show this, we observe that the quantity $\delta^\top \Sigma^{-1} \delta$ converges almost  surely to $\alpha^2\mathbb{E}_H\left[T^{-1}\right]$ under the conditions of Theorem \ref{theo:rda}. This follows by a concentration of quadratic forms argument stated in the supplement, and because the spectrum of $\Sigma$ converges to $H$, so in particular $\EE{\delta^\top \Sigma^{-1} \delta} = \alpha^2 p^{-1}\tr \Sigma^{-1} \to 
\alpha^2\mathbb{E}_H\left[T^{-1}\right]$.}
Therefore, from Theorem \ref{theo:rda} it follows that the cosine of the angle $\cos_{\,\Sigma}(w,\hw_\lambda)$ has a deterministic limit, denoted by $\Gamma(H,\gamma,\alpha^2, \lambda)$, given by 
\begin{equation*}
\label{eq:arm}
\Gamma\p{H,\gamma,\alpha^2,\lambda} = \Theta\p{H,\gamma,\alpha^2,\lambda} \big/ \Delta \in \sqb{0, \, 1},
\end{equation*}
where $\Theta\p{H,\gamma,\alpha^2,\lambda}$ is the classification margin of RDA with the dependence on each parameter made explicit.
The limit of the cosine quantifies the inefficiency of the RDA estimator relative to the Bayes one.

We gain some insight into this angle for two special cases: when $H=\delta_1$, and by taking the limit $\alpha^2\to \infty$. First, with $H = \delta_1$, or equivalently $\Sigma = I_{p\times p}$, we curiously find that the effects of the signal strength $\alpha^2$ and regularization rate $\lambda$ decouple completely, as shown in Corollary \ref{cor:rda_id} below.
See the supplement for a proof of the following result.

\begin{corollary}
\label{cor:rda_id}
Under the conditions of Theorem \ref{theo:rda}, let $\smash{\Sigma = I_{p \times p}}$ for all $p$. Then, the limiting cosine $\Gamma$ of the angle between the Bayes and RDA hyperplanes is
\begin{equation*}
 \Gamma(\delta_1,\gamma,\alpha^2, \lambda) =  \frac{\alpha}{\sqrt{\alpha^2 + \gamma}} \, \sqrt{\frac{1 + \gamma \lambda  \, m_I^2(-\lambda;\gamma)}{1+ \gamma \,  m_I(-\lambda;\gamma)}},
 \end{equation*}
where the Stieltjes transform $m_I(-\lambda;\gamma)$ for $\Sigma = I_{p \times p}$ is given in \eqref{identity_stieltjes}.
For $\gamma=1$, this expression simplifies further to
\begin{equation*}
 \Gamma(\delta_1,1,\alpha^2, \lambda) = \frac{\alpha}{\sqrt{\alpha^2 + 1}} \, \frac{2 \left[\lambda\p{\lambda+4}\right]^{1/4}}{\lambda^{1/2}+ \p{\lambda+4}^{1/2}}.
\end{equation*}
\end{corollary}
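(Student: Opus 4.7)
The plan is to specialise the identities of Theorem \ref{theo:rda} to $\Sigma = I_{p\times p}$ and then collect terms. First I would invoke the geometric setup of Section \ref{sec:rel_margin}: the limiting cosine is $\Gamma = \Theta/\Delta$, and under $H = \delta_1$ one has $\mathbb{E}_H[T^{-1}] = 1$, so $\Delta = \alpha$ and hence $\Gamma = \alpha\,\tau/\sqrt{\alpha^2 \eta + \xi}$. The problem thus reduces to computing $\tau$, $\eta$, $\xi$ explicitly in the identity case and simplifying the resulting ratio.

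The key structural observation I would exploit is that for $H = \delta_1$ the Silverstein fixed-point equation collapses to the explicit quadratic $z\, v(z)^2 + (z+1-\gamma)\,v(z) + 1 = 0$, which at $z=-\lambda$ recovers the stated formula for $m_I$ via the companion identity \eqref{dual.ST}. Implicit differentiation of this quadratic in $z$, combined with the relation itself to eliminate the coefficient $z+1-\gamma$, produces the closed form
\begin{equation*}
v'(-\lambda) \;=\; \frac{v^{2}(v+1)}{1+\lambda v^{2}}, \qquad v = v(-\lambda).
\end{equation*}
Substituting into the definitions of $\eta$ and $\xi$ gives the striking cancellation
\begin{equation*}
\eta \;=\; \frac{v(1-\lambda v)}{\gamma\,(1+\lambda v^{2})}, \qquad \xi \;=\; \frac{v(1-\lambda v)}{1+\lambda v^{2}} \;=\; \gamma\,\eta,
\end{equation*}
so that $\alpha^{2}\eta + \xi = (\alpha^{2}+\gamma)\,\eta$ and hence $\Gamma^{2} = \alpha^{2}\tau^{2}/[(\alpha^{2}+\gamma)\,\eta]$. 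The factor $\alpha/\sqrt{\alpha^{2}+\gamma}$ in the claim is already visible at this stage.

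What remains is the algebraic identity $\tau^{2}/\eta = (1+\gamma\lambda m^{2})/(1+\gamma m)$, where I write $m = m_I(-\lambda;\gamma)$. Two bridges make this tractable. The companion relation \eqref{dual.ST} yields $v = \gamma m + (1-\gamma)/\lambda$, and therefore $1 - \lambda v = \gamma(1-\lambda m)$; the Marchenko--Pastur equation reads $\gamma\lambda m^{2} + (1-\gamma+\lambda)m = 1$. Inserting the closed form for $\eta$ turns the desired identity into $\lambda^{2} m^{2}\,v(1+\lambda v^{2}) (1+\gamma m) = (1-\lambda m)(1+\gamma\lambda m^{2})$, which is polynomial in the pair $(m, v)$ and becomes a tautology after the two quadratic relations are used to eliminate $v^{2}$ and $m^{2}$. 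Taking the positive square root then delivers the first displayed formula of the corollary.

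For the $\gamma = 1$ statement only routine algebra is needed: from $\lambda m^{2} + \lambda m = 1$ one gets $m = (\sqrt{\lambda+4}-\sqrt{\lambda})/(2\sqrt{\lambda})$, $1+m = (\sqrt{\lambda}+\sqrt{\lambda+4})/(2\sqrt{\lambda})$, and $1+\lambda m^{2} = 2 - \lambda m = \tfrac{1}{2}\sqrt{\lambda+4}\,(\sqrt{\lambda+4}-\sqrt{\lambda})$; the conjugate trick $\sqrt{\lambda+4}-\sqrt{\lambda} = 4/(\sqrt{\lambda}+\sqrt{\lambda+4})$ then assembles the stated factor $2[\lambda(\lambda+4)]^{1/4}/(\sqrt{\lambda}+\sqrt{\lambda+4})$. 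The main obstacle I anticipate is the polynomial identity $\tau^{2}/\eta = (1+\gamma\lambda m^{2})/(1+\gamma m)$ itself: a bare-hands expansion is feasible but awkward, and the cleanest route is to keep $m$ and $v$ as independent variables modulo their two quadratic equations, since the structural identity $\xi = \gamma\eta$ has already absorbed the substantive RMT content of the computation.
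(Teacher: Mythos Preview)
Your argument is correct, and both your proof and the paper's hinge on the same structural fact---that $\xi = \gamma\eta$ when $H = \delta_1$---but arrive at it by different routes. You establish $\xi = \gamma\eta$ by computing $v'$ from the Silverstein quadratic and substituting into the definitions of $\eta$ and $\xi$. The paper instead steps back to the trace functionals underlying Theorem~\ref{theo:rda}: $\eta$ and $\xi$ are, up to the common factor $(\lambda v)^2$, the limits of $p^{-1}\tr\bigl(\Sigma(\hSigma_c + \lambda I)^{-2}\bigr)$ and $\gamma\,p^{-1}\tr\bigl([\Sigma(\hSigma_c + \lambda I)^{-1}]^2\bigr)$, and when $\Sigma = I$ these two traces are \emph{identical} and both converge to $m_I'(-\lambda;\gamma)$. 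That observation gives $\Theta = \alpha^2 m_I/\sqrt{(\alpha^2+\gamma)\,m_I'}$ in one line, after which a single implicit differentiation of the Marchenko--Pastur equation $m_I = 1/(1+\lambda-\gamma+\gamma\lambda m_I)$ yields $m_I^2/m_I' = (1+\gamma\lambda m_I^2)/(1+\gamma m_I)$ and finishes. Your anticipated obstacle, the polynomial identity $\tau^2/\eta = (1+\gamma\lambda m^2)/(1+\gamma m)$, is precisely the statement $\tau^2/\eta = m^2/m'$, so it dissolves once one works in $m$ rather than $v$; the paper's route bypasses the mixed $(m,v)$ algebra entirely. Your approach has the virtue of being self-contained from the stated formulas for $\tau,\eta,\xi$ in Theorem~\ref{theo:rda}; the paper's is shorter and explains \emph{why} $\xi = \gamma\eta$ rather than merely verifying it.
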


Examining $\Gamma(\delta_1,\gamma,\alpha^2, \lambda)$, we can attribute the suboptimality to two sources of noise:
We need to pay a price $\smash{{\alpha}/ {\sqrt{\alpha^2 + \gamma}}}$ for estimating $\smash{ \mu_{\pm 1}}$, while the cost of estimating $\Sigma$ is $\smash{([1 + \gamma \lambda  \, m_I^2(-\lambda;\gamma)] / [1+ \gamma \,  m_I(-\lambda;\gamma)])^{1/2}}$.
If we knew that $\Sigma = I$, we could send $\lambda \rightarrow \infty$. It is easy to verify that this would eliminate the second term, leading to a loss of efficiency $\alpha / \sqrt{\alpha^2 + \gamma}$.

In the case of a general covariance matrix $\Sigma$ we get a similar asymptotic decoupling in the strong-signal limit $\alpha^2 \rightarrow \infty$.
The following claim follows immediately from Theorem \ref{theo:rda}.

\begin{corollary}
\label{coro:arm}
Under the conditions of Theorem \ref{theo:rda}, the cosine of the angle between the optimal and learned hyperplanes has the limit as $\alpha^2 \to \infty$: 
\begin{equation*}
\label{eq:arm_strong}
\lim_{\alpha \rightarrow \infty} \Gamma(H,\gamma,\alpha^2, \lambda) = \frac{\tau\p{\lambda}}{\sqrt{\eta\p{\lambda} \, \mathbb{E}_H\left[T^{-1}\right]}}.
\end{equation*}
\end{corollary}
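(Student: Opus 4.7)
The claim follows as a direct algebraic consequence of Theorem~\ref{theo:rda} together with the almost-sure limit $\Delta_{n,p}\to \Delta = \alpha\sqrt{\mathbb{E}_H[T^{-1}]}$ for the Bayes half-Mahalanobis distance. My plan is therefore to unpack the definition of $\Gamma$, substitute the formula for $\Theta(\lambda)$ from Theorem~\ref{theo:rda}, and pass to the limit $\alpha^2\to\infty$ inside the resulting closed-form expression.

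Concretely, recall $\Gamma(H,\gamma,\alpha^2,\lambda) = \Theta(H,\gamma,\alpha^2,\lambda)/\Delta$ and that Theorem~\ref{theo:rda} gives
\[
\Theta(\lambda) = \frac{\alpha^{2}\,\tau(\lambda)}{\sqrt{\alpha^{2}\,\eta(\lambda)+\xi(\lambda)}},
\]
with $\tau(\lambda),\eta(\lambda),\xi(\lambda)$ depending only on $H,\gamma,\lambda$ (and in particular \emph{not} on $\alpha$). Combining this with $\Delta = \alpha\sqrt{\mathbb{E}_H[T^{-1}]}$ yields
\[
\Gamma(H,\gamma,\alpha^{2},\lambda) \;=\; \frac{\alpha\,\tau(\lambda)}{\sqrt{\mathbb{E}_H[T^{-1}]}\,\sqrt{\alpha^{2}\eta(\lambda)+\xi(\lambda)}}.
\]

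Now I factor $\alpha^{2}$ out of the radical in the denominator,
\[
\sqrt{\alpha^{2}\eta(\lambda)+\xi(\lambda)} \;=\; \alpha\,\sqrt{\eta(\lambda)+\xi(\lambda)/\alpha^{2}},
\]
which is valid since $\eta(\lambda)>0$ under the assumptions (it arises as the variance of a nontrivial quadratic form, as made explicit in the proof of Theorem~\ref{theo:rda}). The two factors of $\alpha$ cancel, leaving
\[
\Gamma(H,\gamma,\alpha^{2},\lambda) \;=\; \frac{\tau(\lambda)}{\sqrt{\mathbb{E}_H[T^{-1}]}\,\sqrt{\eta(\lambda)+\xi(\lambda)/\alpha^{2}}}.
\]
Sending $\alpha^{2}\to\infty$, the term $\xi(\lambda)/\alpha^{2}$ vanishes (note $\xi(\lambda)$ is a fixed real number independent of $\alpha$), and continuity of the square root at a strictly positive argument gives
\[
\lim_{\alpha\to\infty}\Gamma(H,\gamma,\alpha^{2},\lambda) \;=\; \frac{\tau(\lambda)}{\sqrt{\eta(\lambda)\,\mathbb{E}_H[T^{-1}]}},
\]
as claimed.

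There is no real obstacle here; the entire argument is a limit computation on an explicit formula, and the only ingredient beyond Theorem~\ref{theo:rda} is the almost-sure convergence $\delta^{\top}\Sigma^{-1}\delta\to\alpha^{2}\mathbb{E}_H[T^{-1}]$, which is already justified in the footnote preceding the corollary via concentration of quadratic forms together with $p^{-1}\tr\Sigma^{-1}\to\mathbb{E}_H[T^{-1}]$. The only point one should flag explicitly is that $\eta(\lambda)>0$, so that dividing by $\sqrt{\eta(\lambda)+o(1)}$ is legitimate; this positivity is visible from the formula $\eta(\lambda)=(v-\lambda v')/\gamma$ using standard monotonicity properties of Stieltjes transforms (namely $v(-\lambda) + \lambda v'(-\lambda)\cdot(-1) > 0$ on $\lambda>0$ when $H$ is not degenerate at $0$, which follows from the assumption $0<b\le\lambda_{\min}(\Sigma)$).
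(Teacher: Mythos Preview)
Your argument is correct and matches the paper's own approach: the paper states that Corollary~\ref{coro:arm} ``follows immediately from Theorem~\ref{theo:rda},'' and what you have written is exactly the straightforward algebraic limit that makes this explicit. The only minor quibble is your justification of $\eta(\lambda)>0$: rather than invoking monotonicity of Stieltjes transforms, you can cite the paper's Lemma~\ref{term_A}, which proves directly that the limit $\tilde A\to \alpha^2|\kappa'(\lambda)|=\alpha^2\eta(\lambda)/(\lambda v)^2$ is strictly positive.
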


Thus, RDA is in general inconsistent for the Bayes hyperplane in the case of strong signals. Corollary \ref{coro:arm} also implies that, in the limit $\alpha \rightarrow \infty$, the optimal $\lambda$ for RDA converges to a non-trivial limit that only depends on the spectral distribution $H$. No such result is true for ridge regression, where $\lambda^* = \alpha^{-2} \gamma \rightarrow 0$ as $\alpha \rightarrow \infty$, regardless of $\Sigma$. This contrast arises because ridge regression is a linear Bayes estimator, while RDA is a heuristic which is strictly suboptimal to the Bayes classifier. 

\begin{figure}[t]
\centering
\includegraphics[width=\BIGFIG\columnwidth]{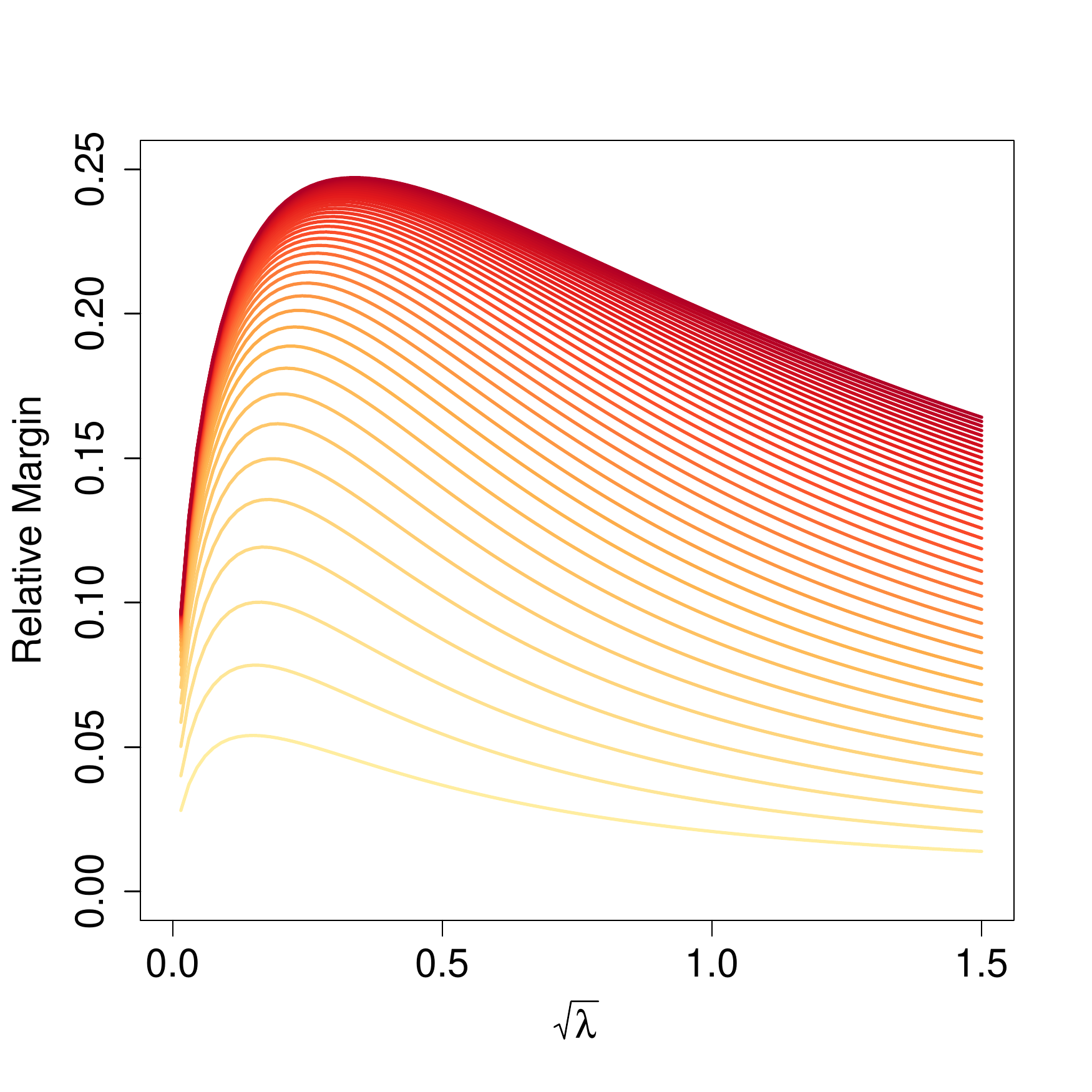}
\caption{The cosine $\Gamma(H,\gamma,\alpha^2, \lambda)$ for the {\tt AR-1}(0.9) model, with $\alpha \in [0.1, 2]$. The values of $\alpha$ used for each curve are evenly spaced, with a gap of 0.05 between each curve.  The cosine quickly converges to a limit as $\alpha$ increases.}
\label{fig:arm}
\end{figure}

We illustrate the behavior of the cosine $\Gamma$ for the {\tt AR-1}(0.9) model in Figure \ref{fig:arm}, which displays $\Gamma$ for values of $\alpha$ ranging from $\alpha = 0.1$ to $\alpha = 2$. We see that the $\Gamma$-curve converges to its large-$\alpha$ limit fairly rapidly. Moreover, somewhat strikingly, we see that the optimal regularization parameter $\lambda^*$, i.e., the maximizer of $\Gamma$, increases with the signal strength $\alpha^2$. 

Finally, we note that \citet{efron1975efficiency} studies the angle $\Gamma$ in detail for low-dimensional asymptotics where $p$ is fixed while $n \rightarrow \infty$; in this case, $\Gamma$ converges in probability to $1$, and the sampling distribution of $n(1 - \Gamma)$ converges to a (scaled) $\chi_{p - 1}^2$ distribution. Establishing the sampling distribution in high dimensions is interesting future work.

\subsection{Do existing theories explain the behavior of RDA?}
\label{sec:ar1}

\begin{figure}[p]
\centering
\begin{tabular}{ccc}
& \scalebox{1.2}{$\alpha^2 = 1$} &  \scalebox{1.2}{$\sqrt{\EE{\Delta_{n,p}^2}} = 2.3$} \\
{\begin{sideways}\parbox{\PBW\columnwidth}{\centering $\rho = 0.1$}\end{sideways}} &
\includegraphics[width=\FW, trim = \TRA mm \TRB mm \TRC mm \TRD mm, clip = TRUE]{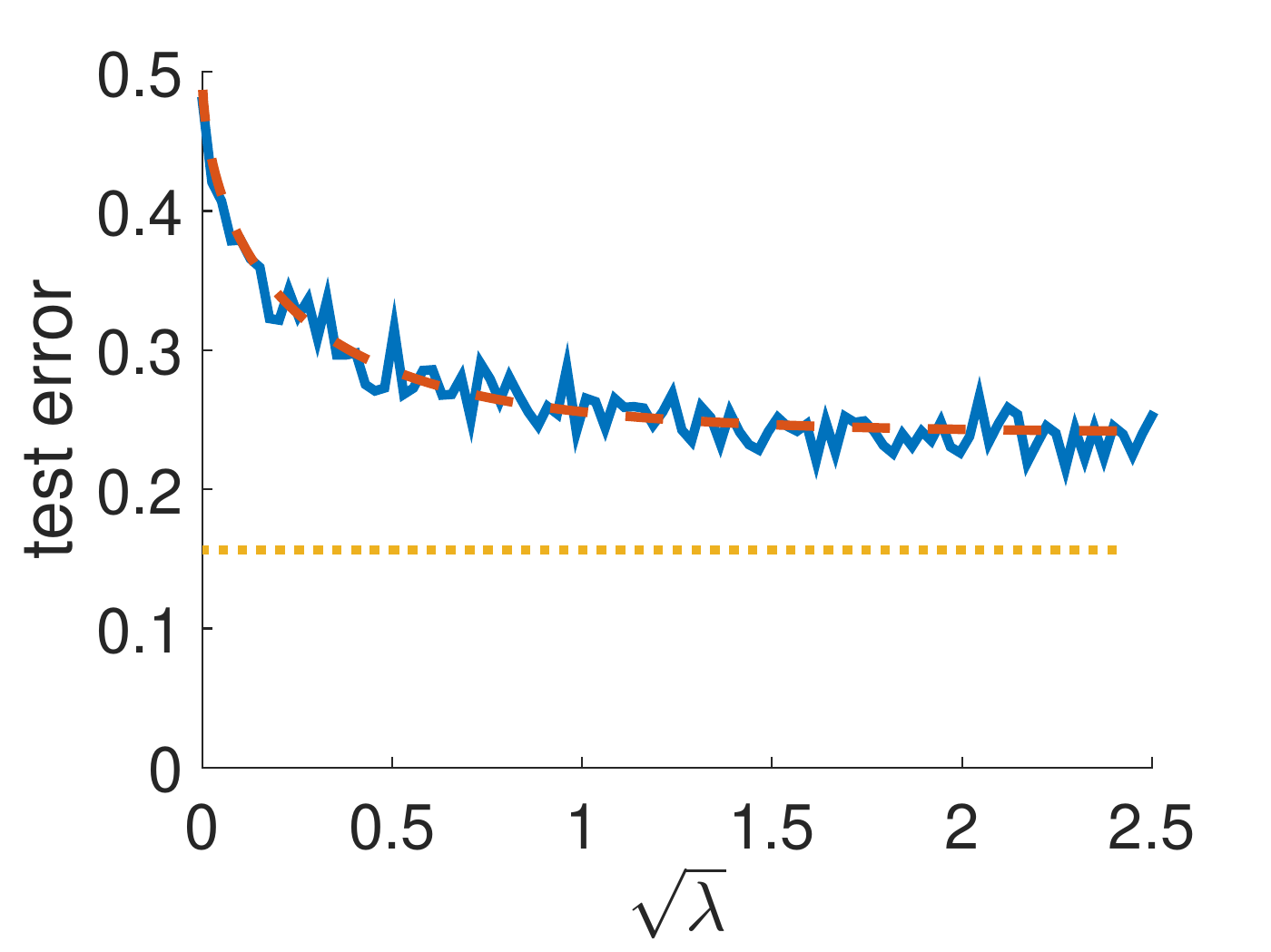} &
\includegraphics[width=\FW, trim = \TRA mm \TRB mm \TRC mm \TRD mm, clip = TRUE]{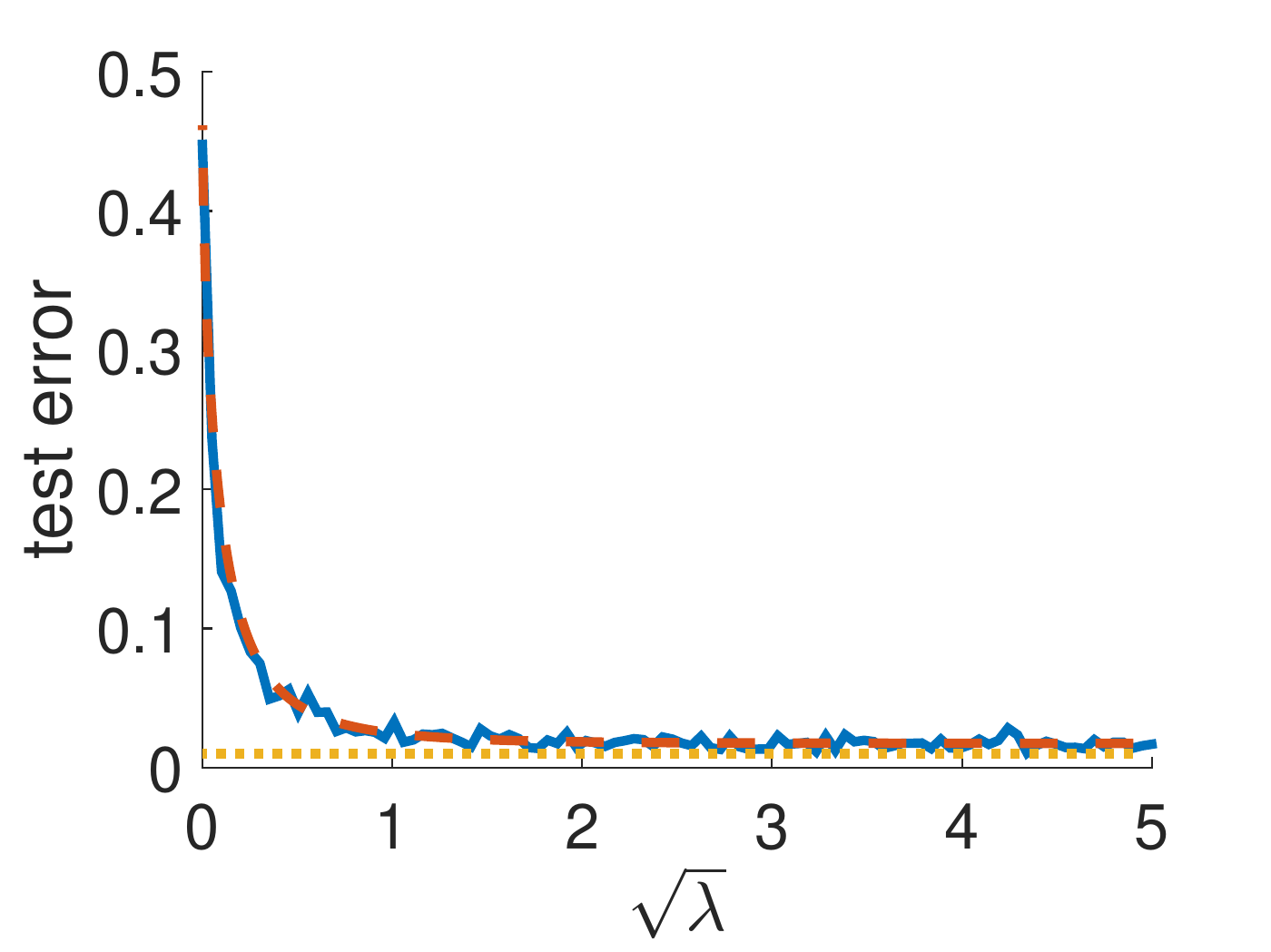} \\
{\begin{sideways}\parbox{\PBW\columnwidth}{\centering $\rho = 0.5$}\end{sideways}} &
\includegraphics[width=\FW, trim = \TRA mm \TRB mm \TRC mm \TRD mm, clip = TRUE]{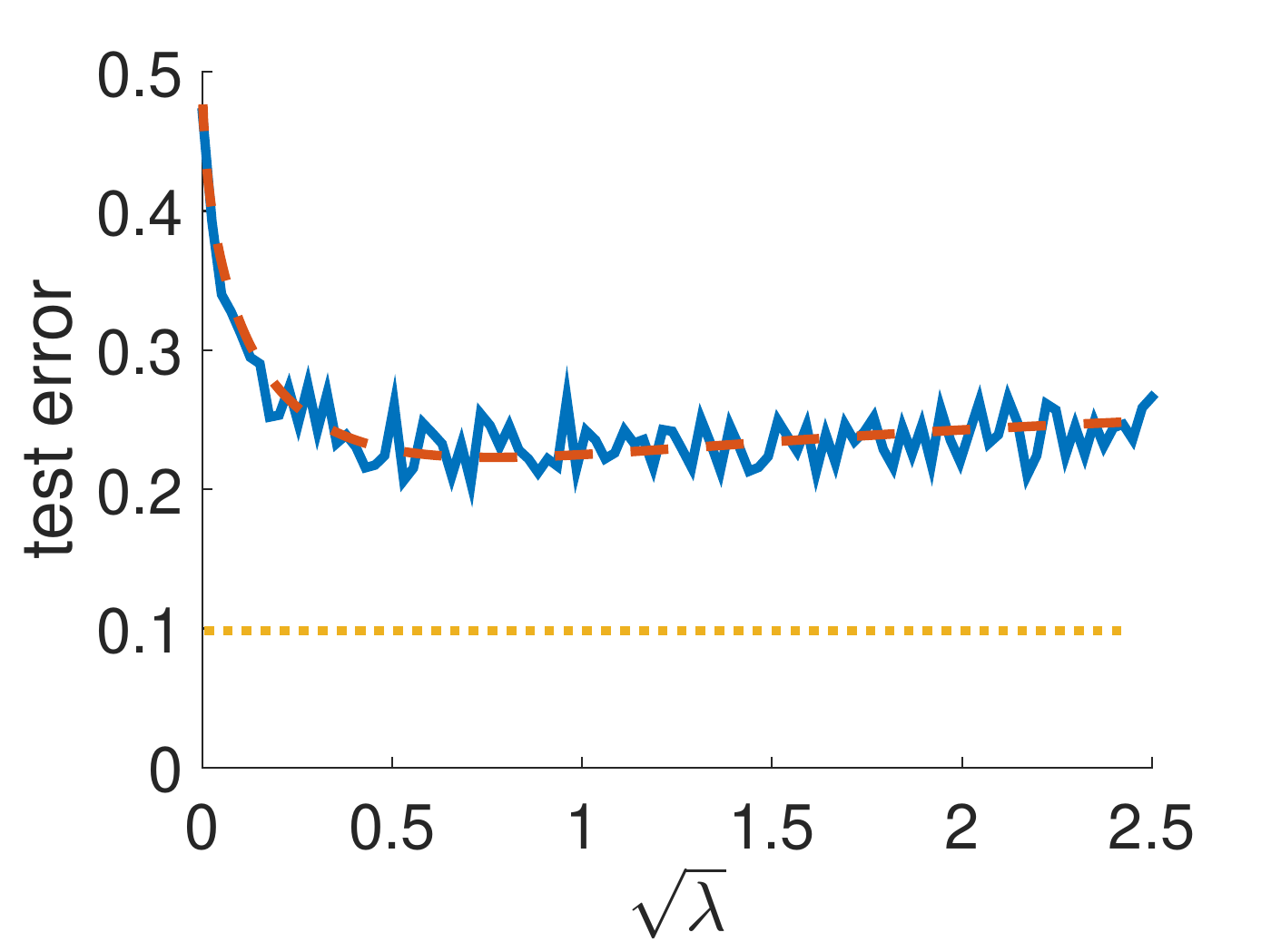} &
\includegraphics[width=\FW, trim = \TRA mm \TRB mm \TRC mm \TRD mm, clip = TRUE]{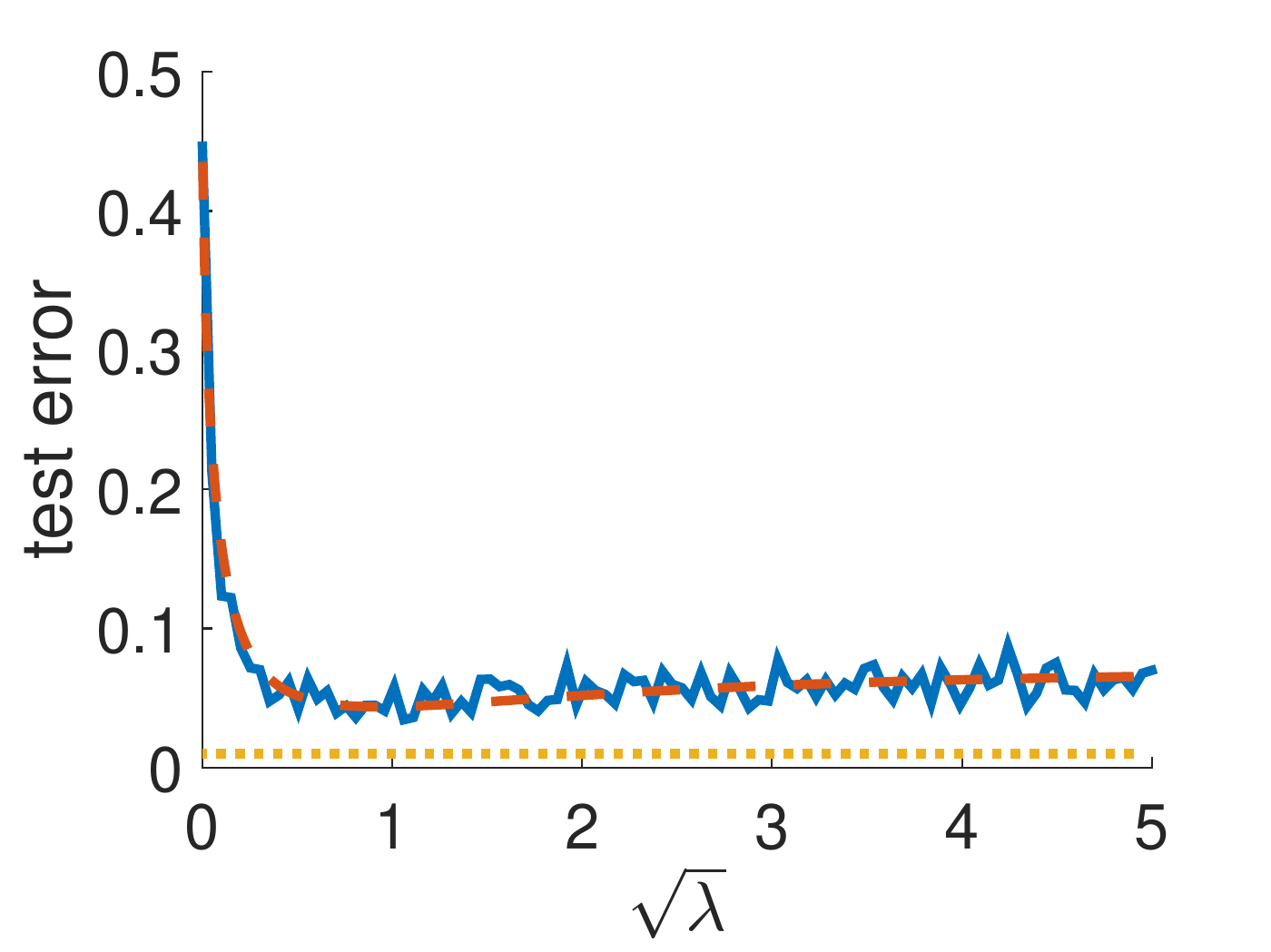}  \\
{\begin{sideways}\parbox{\PBW\columnwidth}{\centering $\rho = 0.9$}\end{sideways}} &
\includegraphics[width=\FW, trim = \TRA mm \TRB mm \TRC mm \TRD mm, clip = TRUE]{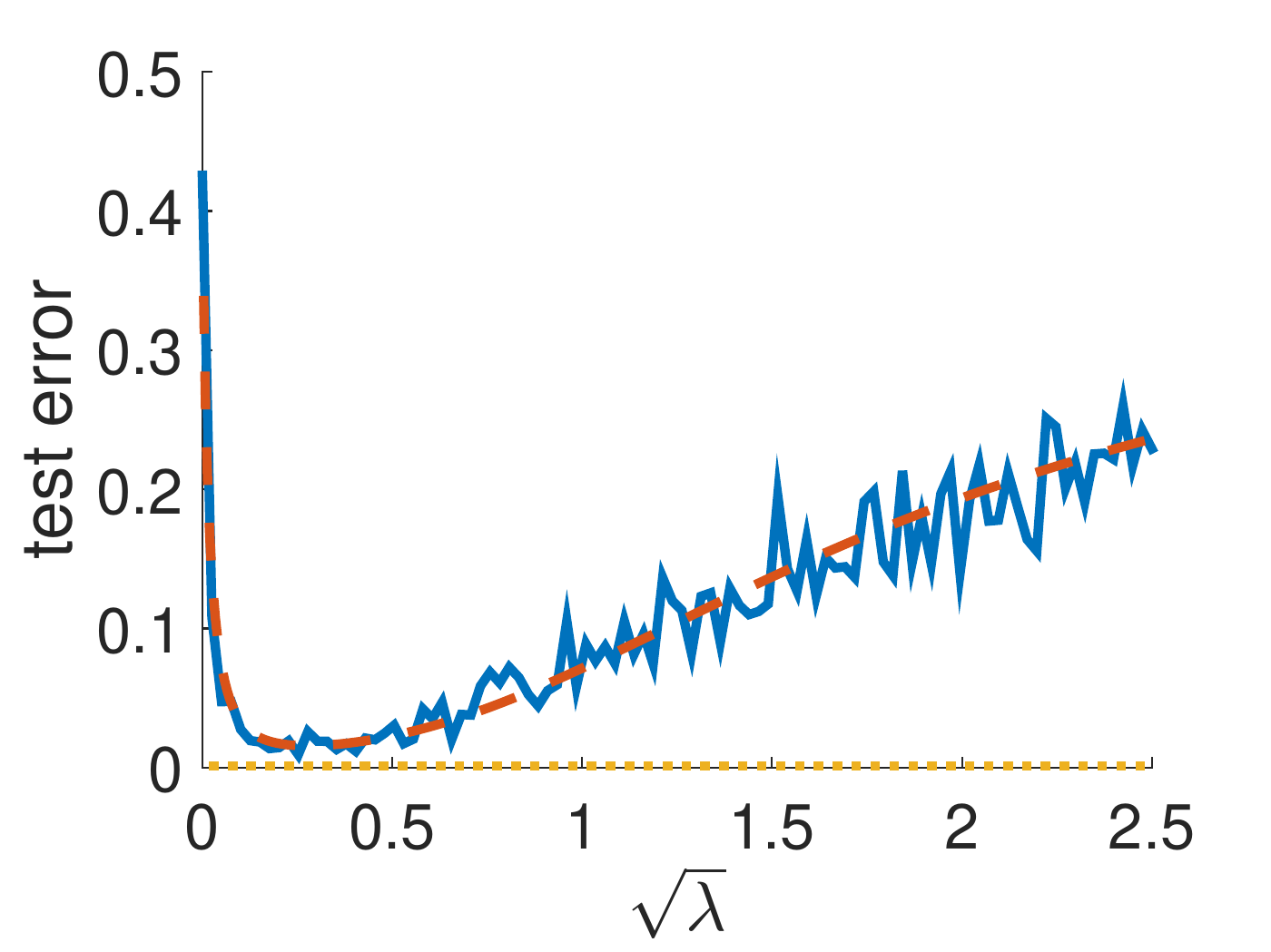} &
\includegraphics[width=\FW, trim = \TRA mm \TRB mm \TRC mm \TRD mm, clip = TRUE]{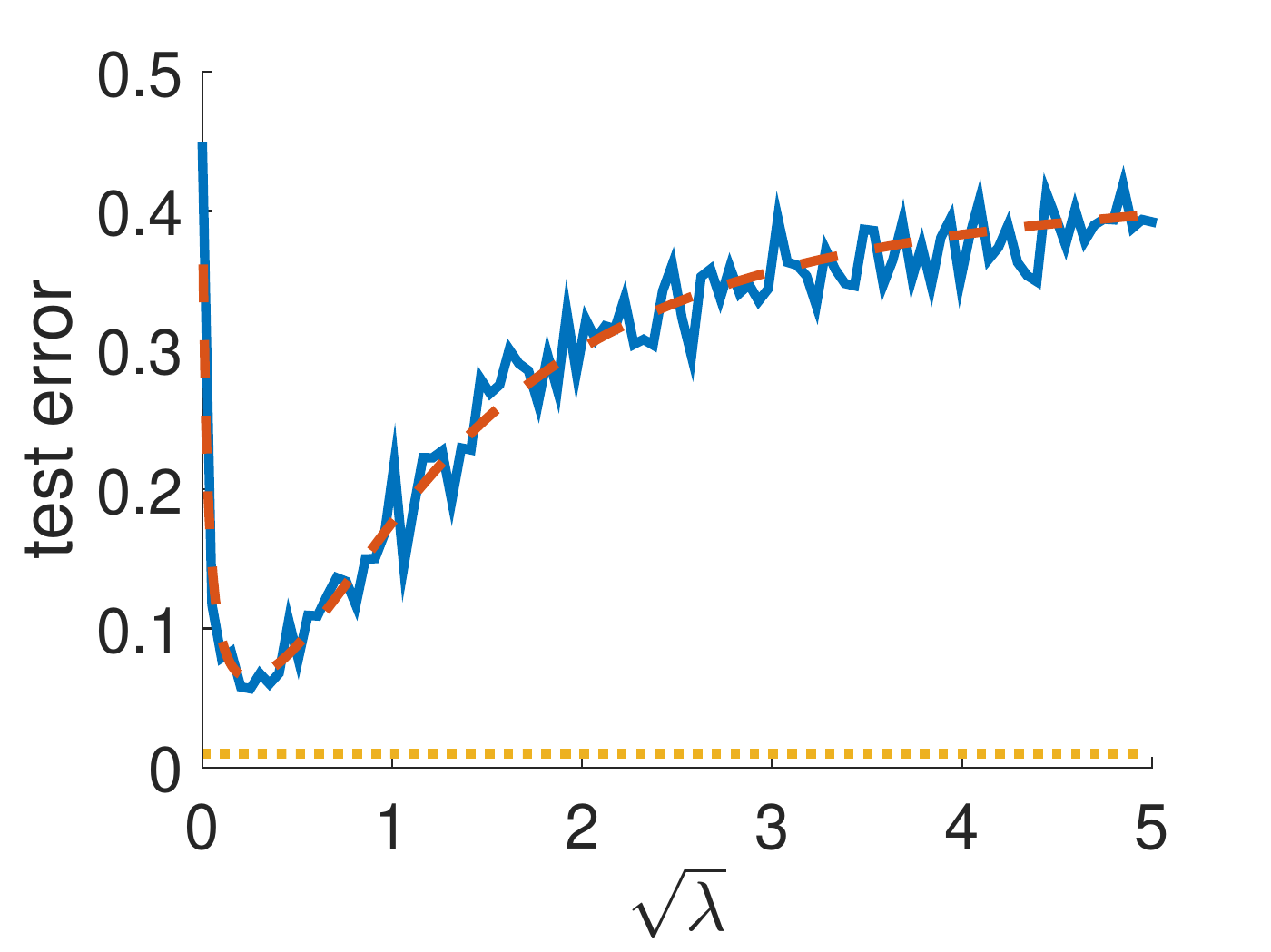} \\
\end{tabular}
\caption{Classification error of RDA in an {\tt AR-1} model. The theoretical formula (red, dashed) is overlaid with the results from simulations (blue, solid; we also display the oracle error (yellow, dotted). In the first column, we keep the signal strength fixed at $\alpha^2 = 1$, whereas in the second column we picked $\alpha$ such as to fix the oracle error at $\Err_{\text{Bayes}} = 0.01$.  We test on 10,000 new samples, and report the average classification error.}
\label{fig:ar1}
\end{figure}

Theorems \ref{theo:rda} and \ref{theo:rda_unequal} give precise information about the error rate of RDA. It is of interest to compare this to classical theories, such as Vapnik-Chervonenkis theory or Rademacher bounds, to if they explain the qualitative behavior of RDA. In this section, we study a simple simulation example, and conclude that existing theory does not precisely explain the behavior of RDA.

We consider a setup with $n =p = 500$, $\smash{\Sigma}$ an auto-regressive ({\tt AR-1}) matrix such that $\smash{\Sigma_{ij} = \rho^{\abs{i - j}}}$, and $\smash{\mu_{\pm 1} \sim \nn(0, \,  \alpha^2 p^{-1} I_{p \times p})}$. This is a natural model when the features can be ordered such that correlations decay with distance; for instance in time series and genetic data.
We run experiments for different values of $\rho$ in two different settings: once with constant effect size $\smash{\alpha^2 = 1}$, and once with constant oracle margin $\smash{\sqrt{\mathbb{E}[\Delta_{n,p}^2]} = 2.3}$. Given $\alpha \geq 0$ and $\rho \in [0, 1)$ one can verify using the description of the limit population spectrum \citep{grenander1984toeplitz}, and by elementary calculations, that the limiting oracle classification margin in the {\tt AR-1} model is \smash{$\Delta = \alpha \sqrt{\p{1 + \rho^2} / \p{1 - \rho^2}}$}; thus, with constant $\alpha^2$ the oracle classifier improves as $\rho$ increases.

Existing results give us some intuition about what to expect. Since $n = p$, classical heuristics based on the theory of \citet{vapnik1971uniform} as well as more specialized analyses \citep{saranadasa1993asymptotic,bickel2004some} predict that unregularized LDA will not work. As we will see, this matches our simulation results.
Meanwhile, \citet{bickel2004some} study worst-case performance of the independence rule relative to the Bayes rule. In our setting, it can be verified that their results imply $\Theta_{\text{IR}} \geq (1 - \rho^2)/(1 + \rho^2) \, \Delta$, where the error rate of the independence rule is $\Phi(-\Theta_{\text{IR}})$. This predicts that independence rules will work better for small correlation $\rho$, which again will match the simulations. 

The existing theory, however, is much less helpful for understanding the behavior of RDA for intermediate values of $\lambda$.
A learning theoretic analysis based on Rademacher complexity suggests that the generalization performance of RDA should depend on terms that scale like $\smash{\sqrt{\lVert{\hw_{\lambda}}\rVert_2^2 \tr{\Sigma} / n} \asymp \sqrt{\lambda^{-2} p / n}}$ for large values of $\lambda$ \citep[e.g.,][]{bartlett2003rademacher}.
In other words, based on a classical approach, we might expect that mildly regularized RDA should not work, but using a large $\lambda$ may help.
Rademacher theory is not tight enough to predict what will happen for $\lambda \approx 1$.

Given this background, Figure \ref{fig:ar1} displays the performance of RDA for different values of $\rho$, along with our theoretically derived error from Theorem \ref{theo:rda}. In the $\smash{\alpha^2 = 1}$ case, we find that---as predicted---unregularized LDA does poorly. However, when $\rho$ is large, mildly regularized RDA does quite well.

Strikingly, RDA is able to benefit from the growth of the oracle classification margin with $\rho$, but only if we use a small positive value of $\lambda$. The analyses based on unregularized LDA or ``infinitely regularized'' independence rules do not cover this case. Moreover, this phenomenon is not predicted by Rademacher theory, which requires $\lambda \gg 1$ to improve over basic Vapnik-Chervonenkis bounds. 
Results from constant margin case $\smash{\sqrt{\mathbb{E}[\Delta_{n,p}^2]} = 2.3}$ reinforce the same interpretations. Finally, our formulas for the error rate are accurate despite the moderate sample size $n = p = 500$.

\subsection{Linear Discriminant Analysis vs. Independence Rules}
\label{lda_vs_ir}

Two points along the RDA risk curve that allow for particularly simple analytic expressions occur as $\lambda \to 0$ and $\lambda \to \infty$:
the former is just classical linear discriminant analysis while the latter is equivalent to an independence rule (or ``na\"ive Bayes'').
In this section, we show how taking these limits we can recover known results about the high-dimensional asymptotics of linear discriminant analysis and na\"ive Bayes. Further, we compare these two methods over certain parameter classes. 

Note that $\lambda \to \infty$ leads to a linear discriminant rule with weight vector $\smash{\hdelta  = \hmu_{+1}-  \hmu_{-1}}$. Usual independence rules take the form $\smash{\diag(\hSigma_c)^{-1}\hdelta}$. We will assume that all features are normalized to have equal variance, $\Sigma_{ii} = \sigma>0$. In this case the $\lambda \to \infty$ rule corresponds to an independence rule with oracle information about the equality of variances; which we still call ``indpendence rule'' for simplicity.

Extending our previous notation, we define the asymptotic margin of LDA and independence rules, by taking the limits of $\Theta(\lambda)$ at 0 and $\infty$:  
\begin{equation*}
 \Theta_{\text{LDA}} = \lim_{\lambda \rightarrow 0} \frac{\alpha^2 \, \tau }{\sqrt{\alpha^2 \, \eta + \theta}} \ \eqand \ \Theta_{\text{IR}} = \lim_{\lambda \to \infty} \frac{\alpha^2 \, \tau }{\sqrt{\alpha^2 \, \eta + \theta}}.
\end{equation*}
Both limits are well-defined and admit simple expressions, as given below.

\begin{theorem}
\label{theo:lda_ir}
Let $H$ be the limit population spectral distribution of the covariance matrices $\Sigma$; and let $T$ be a random variable with distribution $H$. Under the conditions of Theorem \ref{theo:rda}, the margins of LDA and independence rules are equal to 
\begin{equation*}
 \Theta_{\text{LDA}} =\frac{\alpha^2 \, \sqrt{1-\gamma} \, \mathbb{E}_H\left[T^{-1}\right] }{\sqrt{\alpha^2 \, \mathbb{E}_H\left[T^{-1}\right] + \gamma}}  \ \eqand \ \Theta_{\text{IR}} = \frac{\alpha^2 }{\sqrt{\alpha^2 \, \mathbb{E}_H\left[T\right] + \gamma \mathbb{E}_H\left[T^2\right]}}.
\end{equation*}
The formula for LDA is valid for $\gamma<1$ while that for IR is valid for any $\gamma$.
\end{theorem}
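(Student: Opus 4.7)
I would compute the limits of $\tau(\lambda)$, $\eta(\lambda)$, and $\xi(\lambda)$ as $\lambda\to 0$ and $\lambda\to\infty$ inside the margin formula $\Theta(\lambda) = \alpha^2\tau(\lambda)/\sqrt{\alpha^2\eta(\lambda)+\xi(\lambda)}$ provided by Theorem \ref{theo:rda}. A preliminary simplification uses the duality \eqref{dual.ST}, which at $z=-\lambda$ gives $v(-\lambda)=\gamma m(-\lambda)+(1-\gamma)/\lambda$ and $v'(-\lambda)=\gamma m'(-\lambda)+(1-\gamma)/\lambda^{2}$. Substituting yields $v-\lambda v' = \gamma(m-\lambda m')$, so
\[ \eta(\lambda)=m(-\lambda)-\lambda\,m'(-\lambda), \]
and all three quantities $\tau,\eta,\xi$ are expressible through $m(-\lambda)$ and $m'(-\lambda)$ alone.

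\emph{LDA limit ($\lambda\to 0$).} Under $\gamma<1$ and $\lambda_{\min}(\Sigma)\ge b>0$, the support of $F$ is bounded away from $0$ (the Marchenko--Pastur ESD has no atom at $0$ when $\gamma<1$), so $m(0)=\int t^{-1}\,dF(t)$ and $m'(0)=\int t^{-2}\,dF(t)$ are finite. Evaluating the Marchenko--Pastur fixed-point equation at $z=0$ gives $m(0)=\mathbb{E}_H[T^{-1}]/(1-\gamma)$, and the duality then yields $\lambda v(-\lambda)\to 1-\gamma$ and $\lambda^{2}v'(-\lambda)\to 1-\gamma$. Using $\lambda m'(-\lambda)\to 0$, I get
\[ \tau(0^{+})=\mathbb{E}_H[T^{-1}], \qquad \eta(0^{+})=\frac{\mathbb{E}_H[T^{-1}]}{1-\gamma}, \qquad \xi(0^{+})=\frac{1}{1-\gamma}-1=\frac{\gamma}{1-\gamma}. \]
Plugging into $\Theta$ and collecting terms produces the stated $\Theta_{\text{LDA}}=\alpha^{2}\sqrt{1-\gamma}\,\mathbb{E}_H[T^{-1}]/\sqrt{\alpha^{2}\mathbb{E}_H[T^{-1}]+\gamma}$.

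\emph{Independence-rule limit ($\lambda\to\infty$).} Let $\mu_k=\mathbb{E}_F[T^k]$. Since $F$ has compact support in $[0,\infty)$, one has the Laurent expansions
\[ m(-\lambda)=\lambda^{-1}-\mu_1\lambda^{-2}+\mu_2\lambda^{-3}+O(\lambda^{-4}), \qquad m'(-\lambda)=\lambda^{-2}-2\mu_1\lambda^{-3}+3\mu_2\lambda^{-4}+O(\lambda^{-5}), \]
together with the corresponding expansions of $v,v'$ obtained via the duality. Rescaling the numerator and denominator of $\Theta$ by $\lambda$ and grinding through term by term gives $\lambda\tau(\lambda)=\lambda^{2}mv\to 1$, $\lambda^{2}\eta(\lambda)\to\mu_1$, and---after the leading $O(1)$ and $O(\lambda^{-1})$ terms in $v'/v^{2}$ cancel exactly---$\lambda^{2}\xi(\lambda)\to\gamma(\mu_2-\gamma\mu_1^{2})$. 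Hence
\[ \Theta(\lambda)=\frac{\alpha^{2}\lambda\tau(\lambda)}{\sqrt{\alpha^{2}\lambda^{2}\eta(\lambda)+\lambda^{2}\xi(\lambda)}}\longrightarrow\frac{\alpha^{2}}{\sqrt{\alpha^{2}\mu_1+\gamma(\mu_2-\gamma\mu_1^{2})}}. \]
I finish by invoking the classical $F$-to-$H$ moment identities $\mu_1=\mathbb{E}_H[T]$ and $\mu_2=\mathbb{E}_H[T^{2}]+\gamma\mathbb{E}_H[T]^{2}$, derivable either by matching low-order coefficients in the Marchenko--Pastur equation or directly from $p^{-1}\tr(\hSigma)$ and $p^{-1}\tr(\hSigma^{2})$. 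These collapse $\gamma(\mu_2-\gamma\mu_1^{2})$ to $\gamma\mathbb{E}_H[T^{2}]$, producing $\Theta_{\text{IR}}$.

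\emph{Main obstacle.} The delicate step is the IR bookkeeping: in $\xi(\lambda)=v'/v^{2}-1$, the $1$ and the $O(\lambda^{-1})$ terms cancel simultaneously, so I must retain the $\lambda^{-3}$ coefficient of $v$ and recognize that the surviving quantity $\mu_2-\gamma\mu_1^{2}$ equals the $H$-second moment $\mathbb{E}_H[T^{2}]$. The LDA case is comparatively straightforward once the support of $F$ is verified to be bounded away from $0$, which is precisely where the hypothesis $\gamma<1$ enters and also why the LDA formula is restricted to $\gamma<1$ whereas the IR formula holds for arbitrary $\gamma$.
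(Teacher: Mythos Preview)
Your proof is correct and follows essentially the same strategy as the paper: both compute the limits of $\tau,\eta,\xi$ as $\lambda\to 0$ and $\lambda\to\infty$, use the Marchenko--Pastur equation at $z=0$ to obtain $m(0)=\mathbb{E}_H[T^{-1}]/(1-\gamma)$ for the LDA side, and invoke the $F$-to-$H$ moment identities $\mu_1=\mathbb{E}_H[T]$, $\mu_2=\mathbb{E}_H[T^{2}]+\gamma\mathbb{E}_H[T]^{2}$ for the IR side. The only difference is presentational: for the IR limit you expand $m,v$ in Laurent series in $\lambda^{-1}$, whereas the paper works with the integral representations $m(-\lambda)=\mathbb{E}[(Y+\lambda)^{-1}]$, $v(-\lambda)=\mathbb{E}[(\underline{Y}+\lambda)^{-1}]$ and handles the delicate $\xi$ limit via the variance-type identity $\lambda^{2}(v'-v^{2})=\mathrm{Var}\big(\lambda/(\underline{Y}+\lambda)\big)$, which after multiplying by $\lambda^{2}$ yields $\mathbb{E}[\underline{Y}^{2}]-\mathbb{E}[\underline{Y}]^{2}=\gamma\mu_{2}-\gamma^{2}\mu_{1}^{2}$ directly. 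Your Laurent bookkeeping reaches the same expression, so this is a matter of taste rather than substance.
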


This result is proved in the supplement.  The formulas are simpler than Theorem \ref{theo:rda}, as they involve the population spectral distribution $H$ directly through its moments. For RDA, the error rate depends on $H$ implicitly through the Stieltjes transform of the ESD $F$. 

These formulas are equivalent to known results, some of which were obtained under slightly different parametrization. In particular, \cite{serdobolskii2007multiparametric} attributes the IR formula with $H = \delta_1$ to unpublished work by Kolmogorov in 1967, while the \cite{raudys2004results} attributes it to \cite{raudys1967determining}. The LDA formula was derived by \citet{deev1970representation} and \citet{raudys1972amount}; see Section \ref{sec:rda_review}. Here, our goal was to show how these simple formulas can be recovered from the more powerful Theorem \ref{theo:rda}. 

\citet{saranadasa1993asymptotic} also obtains closed form expressions for the limit risk of two classification methods, the D-criterion and the A-criterion. One can verify that these are asymptotically equivalent to LDA and IR, respectively. Our results are consistent with those of \citet{saranadasa1993asymptotic};  but they differ slightly in the modeling assumptions.  In our notation, his results (as stated in his Theorem 3.2 and Corollary 3.1) are:
$
\Theta^S_{\text{LDA}} = \alpha \sqrt{\mathbb{E}_H\left[T^{-1}\right]} \sqrt{1-\gamma}  \ \eqand \ \Theta^S_{\text{IR}} =  \alpha/\sqrt{\mathbb{E}_H\left[T\right]}.
$
These results are nearly identical to Theorem \ref{theo:lda_ir}, but our equations have an extra term involving $\gamma$ in the denominator: $\gamma$ for LDA and $\gamma\mathbb{E}_H[T^2]$ for IR. The reason is that we consider $\mu_{\pm 1}$ as random, whereas \citet{saranadasa1993asymptotic} considers them as fixed sequences of vectors; this extra randomness yields additional variance terms.

Theorem \ref{theo:lda_ir} enables us to compare the worst-case performance of LDA and IR over suitable parameter classes of limit spectra. For $0 < k_1 \le 1 \le  k_2$ we define the class
\begin{equation*}
\mathcal{H}(k_1,k_2) = \left\{H: \mathbb{P}_H([k_1,k_2]) = 1, \, \mathbb{E}_H[T]=1 \right\}.
\end{equation*}
The bounds $k_1,k_2$ control the ill-conditioning of the population covariance matrix. We normalize such that the average population eigenvalue is 1, to ensure that the scaling of the problem does not affect the answer. This parameter space is somewhat similar to the one considered by \cite{bickel2004some}. Perhaps surprisingly, however, a direct comparison over these natural problem classes appears to be missing from the literature, and so we provide it below (see the supplement for a proof).

\begin{corollary}
Under the conditions of Theorem \ref{theo:lda_ir}, consider the behavior of LDA and independence rules for $H\in \mathcal{H}(k_1,k_2)$.

\begin{enumerate}
\item  The worst-case margin of LDA is: 
$$
 \bar\Theta_{\text{LDA}}(\gamma; \alpha^2):=\inf_{H\in \mathcal{H}(k_1,k_2)} \Theta_{\text{LDA}}(H,\gamma; \alpha^2)  = \frac{\alpha^2 \, \sqrt{1-\gamma}}{\sqrt{\alpha^2 + \gamma}}. 
$$
The least favorable distribution for LDA from the class $\mathcal{H}(k_1,k_2)$ is the point mass at 1: $H = \delta_1$, i.e., $\Sigma = I$.

\item The worst-case margin for independence rules is: 
$$
\bar\Theta_{\text{IR}}(\mathcal{H},\gamma; \alpha^2):=\inf_{H\in \mathcal{H}(k_1,k_2)} \Theta_{\text{IR}}(H,\gamma; \alpha^2)  = \frac{\alpha^2 }{\sqrt{\alpha^2 + \gamma (k_1 + k_2 - k_1k_2)}}.
$$
If $k_1<k_2$ the least favorable distribution is the mixture $H = w_1 \delta_{k_1} + w_2 \delta_{k_2}$, where the weights are $w_1 = (k_2-1)/(k_2-k_1)$ and $w_2 = (1-k_1)/(k_2-k_1)$; while if $k_1=k_2=1$, it is the point mass at 1: $H = \delta_1$. 
\end{enumerate}
\label{cor:lda_ir_minimax}
\end{corollary}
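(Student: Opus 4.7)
The plan is to reduce each worst-case problem to a one-dimensional moment problem by isolating the single functional of $H$ that enters each margin formula, and then to optimize that functional over $\mathcal{H}(k_1,k_2)$.

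First I would observe that, by Theorem \ref{theo:lda_ir}, the squared LDA margin depends on $H$ only through $u := \mathbb{E}_H[T^{-1}]$:
\begin{equation*}
\Theta_{\text{LDA}}^2 = \frac{\alpha^4(1-\gamma)u^2}{\alpha^2 u + \gamma},
\end{equation*}
and the map $u \mapsto \alpha^4(1-\gamma)u^2 / (\alpha^2 u + \gamma)$ is strictly increasing on $(0,\infty)$ (its derivative with respect to $u$ is positive since the numerator is a rational function of the form $u \cdot (\alpha^2 u + 2\gamma)$ up to positive constants). So minimizing $\Theta_{\text{LDA}}$ over $\mathcal{H}(k_1,k_2)$ is equivalent to minimizing $\mathbb{E}_H[T^{-1}]$ subject to the constraints $\mathrm{supp}(H) \subseteq [k_1,k_2]$ and $\mathbb{E}_H[T]=1$. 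By Jensen's inequality (applied to the convex function $t \mapsto 1/t$ on $(0,\infty)$), $\mathbb{E}_H[T^{-1}] \geq 1/\mathbb{E}_H[T] = 1$, with equality iff $H$ is a point mass. Since $k_1 \leq 1 \leq k_2$, the point mass $\delta_1$ lies in $\mathcal{H}(k_1,k_2)$, so it attains the bound. Substituting $u = 1$ gives the stated formula for $\bar\Theta_{\text{LDA}}$.

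For the independence rule, the same reduction gives
\begin{equation*}
\Theta_{\text{IR}}^2 = \frac{\alpha^4}{\alpha^2 \mathbb{E}_H[T] + \gamma \mathbb{E}_H[T^2]} = \frac{\alpha^4}{\alpha^2 + \gamma \mathbb{E}_H[T^2]},
\end{equation*}
since the mean is fixed at $1$, and this is strictly decreasing in $b := \mathbb{E}_H[T^2]$. Hence minimizing $\Theta_{\text{IR}}$ is equivalent to maximizing the second moment of $H$ over $\mathcal{H}(k_1,k_2)$. The key observation is the elementary pointwise inequality $(t-k_1)(t-k_2) \leq 0$ on $[k_1,k_2]$, i.e.\ $t^2 \leq (k_1+k_2)t - k_1 k_2$, which upon integration against $H$ and using $\mathbb{E}_H[T]=1$ yields
\begin{equation*}
\mathbb{E}_H[T^2] \leq (k_1+k_2) - k_1 k_2,
\end{equation*}
with equality iff $H$ is supported on $\{k_1,k_2\}$. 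When $k_1 < k_2$, the mean constraint then uniquely determines the weights as $w_1 = (k_2-1)/(k_2-k_1)$ and $w_2 = (1-k_1)/(k_2-k_1)$; when $k_1 = k_2 = 1$, the class $\mathcal{H}$ reduces to $\{\delta_1\}$ trivially. Plugging $b = k_1 + k_2 - k_1 k_2$ into the formula for $\Theta_{\text{IR}}^2$ gives the stated expression for $\bar\Theta_{\text{IR}}$.

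The proof is essentially a two-step exercise: recognize the monotone dependence of each margin on a single moment functional, then solve the corresponding one-dimensional moment extremal problem. The only step requiring any care is verifying that the extremal measures actually lie in $\mathcal{H}(k_1,k_2)$ (which is immediate from the constraints $k_1 \leq 1 \leq k_2$), and handling the degenerate case $k_1 = k_2 = 1$ for the IR bound. I do not anticipate any serious obstacle.
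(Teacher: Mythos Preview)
Your proposal is correct and follows essentially the same approach as the paper: Jensen's inequality for the LDA part and the pointwise inequality $(t-k_1)(t-k_2)\le 0$ for the IR part. You are slightly more explicit than the paper in justifying the monotonicity of $\Theta_{\text{LDA}}$ in $u=\mathbb{E}_H[T^{-1}]$, which is a welcome clarification but not a substantive difference.
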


This result shows a stark contrast between the worst-case behavior of LDA and independence rules: for fixed signal strength, the worst-case risk of LDA over $\mathcal{H}$ only depends on $\gamma$, and is attained with the limit of identity covariances $\Sigma = I_{p\times p}$ regardless of the values of $k_1, \, k_2$. In contrast, the worst-case behavior of IR  occurs for a least favorable distribution $H$ that is as highly spread as possible. This highlights the sensitivity of IR to ill-conditioned covariance matrices. 
For $0 < \gamma < 1$, we see that IR are better than LDA in the worst case over $\mathcal{H}$, i.e. $ \bar\Theta_{\text{LDA}}(\gamma; \alpha^2)< \bar\Theta_{\text{IR}}(\mathcal{H},\gamma; \alpha^2)$, if and only if
\begin{equation*}
\alpha^2 + 1 > (1-\gamma)(k_1 + k_2 - k_1k_2).
\label{lda_ir_worstcase_comp}
\end{equation*}
In particular IR performs better than LDA for strong signals $\alpha$; with weaker signals, LDA can sometimes have an edge, particularly if the covariance is poorly conditioned, quantified by a large measure of spread $k_1+k_2-k_1k_2 = (k_2-1)(1-k_1)+1$. 

\subsection{Literature Review for High-Dimensional RDA}
\label{sec:rda_review}

There has been substantial work in the former Soviet Union on high-dimensional classification; references on this work include \citet{raudys2004results}, \citet{raudys2012statistical}, and \citet{serdobolskii2007multiparametric}.
 \citet{raudys1967determining}\footnote{\citet{raudys1967determining} is in Russian; see \citet{raudys2004results} for a review.} derived the $n, \, p \to \infty$ asymptotic error rate of independence rules in identity-covariance case $\Sigma = I_{p\times p}$, while \citet{deev1970representation} and \citet{raudys1972amount} obtained the error rate of un-regularized linear discriminant analysis (LDA) for general covariance $\Sigma$, again in the $n, \, p \to \infty$ regime.\footnote{\citet{serdobolskii2007multiparametric} attributes some early results on independence rules with identity covariance to Kolmogorov in 1967, and calls the framework $n,\, p \to \infty$, $p/n \to \gamma$ the Kolmorogov asymptotic regime. However, Kolmogorov apparently never published on the topic.} As shown in the previous section, these results can be obtained as special cases of our more general formulas.

For RDA, \citet{serdobolskii1983minimum} (see also Chapter 5 of \citet{serdobolskii2007multiparametric}) considered a more general setting than this paper: classification with a weight vector of the form $\Gamma(\hSigma_c)^{-1}\hdelta$ instead of just $(\hSigma_c + \lambda I_{p \times p})^{-1}\hdelta$, where the scalar function $\Gamma$ admits the integral representation $\Gamma(x) = \int (x +t)^{-1}\,d\eta(t)$ for a suitable measure $\eta$, and is extended to matrices in the usual way. He derived a limiting formula for the error rate of this classifier under high-dimensional asymptotics. However, due to their generality, his results are substantially more involved and much less explicit  than ours. In some cases it is unclear to us how one could numerically compute his formulas in practice. Furthermore, his results are proved when $\gamma <1$, and show convergence in probability, not almost surely. 
We also note the work of \citet{raudys1995small}, who derived results about the risk of usual RDA with vanishingly small regularization $\lambda = o(1)$, and for the special case $\gamma <1$.

In another line of work, a Japanese school \citep[e.g.,][and references therein]{fujikoshi2011multivariate} has studied the error rates of LDA and RDA under high-dimensional asymptotics, with a focus on obtaining accurate higher-order expansions to their risk. For instance  \citet{fujikoshi1998asymptotic} obtained asymptotic expansions for the error rate of un-regularized LDA, which can be verified to be equivalent to our results in the $\lambda \rightarrow 0$ limit. More recently, \citet{kubokawa2013asymptotic} obtained a second-order expansion of the error rate of RDA with vanishingly small regularization parameter $\lambda = O(1/n)$ in the case $\gamma <1$.

Finally, in the signal processing and pattern recognition literature, \citet{zollanvari2011analytic} provided asymptotic moments of estimators of the error rate of LDA, under an asymptotic framework where $n,p \to \infty$; however, this paper assumes that the covariance matrix $\Sigma$ is {known}.
More recently, \citet{zollanvari2015generalized} provide consistent estimators for the error rate of RDA in a doubly asymptotic framework, using deterministic equivalents for random matrices.
The goal of our work is rather different from theirs, in that we do not seek empirical estimators of the error rate of RDA, but instead seek simple formulas that help us understand the behavior of RDA.

\section*{Acknowledgment}

We are grateful to everyone who has provided comments on this manuscript, in particular David Donoho, Jerry Friedman, Iain Johnstone, Percy Liang, Asaf Weinstein, and Charles Zheng. E. D. is supported in part by NSF grant DMS-1418362. 

{\small
\setlength{\bibsep}{0.2pt plus 0.3ex}
\bibliographystyle{plainnat-abbrev}
\bibliography{references}
}

\section{Supplement}

The supplement is organized as follows: Section \ref{sec:comput} describes the efficient computation of the risk formulas, Section \ref{proofs_ridge} has the proofs for ridge regression, and Section \ref{proofs_rda} has the proofs for regularized discriminant analysis. At several locations we refer to equation numbers from the main text.

\section{Efficient computation of the risk formulas}
\label{sec:comput}

Consider the spectral distribution of the companion matrix $\smash{\underline \hSigma = n^{-1} X X^\top}$. Since its spectral distribution $\smash{F_{\underline \hSigma}}$ differs from $\smash{F_{\hSigma}}$ by $|n-p|$ zeros, it follows that $\smash{\underline \hSigma}$ has a limit ESD $\smash{\underline F}$, given by $\smash{\underline{F} = \gamma F + (1-\gamma) I_{[0,\infty)}}$.
The companion Stieltjes transform satisfies the Silverstein equation \citep{silverstein1992signal, silverstein1995analysis}:
\begin{equation}
\label{silv.eq}
-\frac{1}{v(z)} = z - \gamma \int \frac{t \, dH(t)}{1 + tv(z)}.
\end{equation}
It is known that for $z \in \mathcal {S} : = \{u+ iv: v\neq0 \mbox{, or } v=0, u >0\}$, $v(z)$ is the unique solution of the Silverstein equation with $v(z) \in \mathcal {S}$ such that $\sign(\mathrm{Imag}\{v(z)\})=\sign(\mathrm{Imag}(z))$. 

We now explain how to compute the key quantities $m,v,m',v'$ that will come up in our risk formulas. On the interval $v \in [0,\infty)$, \citet{silverstein1995analysis} prove that the functional inverse of $z \to v:=v(z)$ has the explicit form: 
\begin{equation}
\label{ST_inverse}
z(v) = -\frac{1}{v}  +\gamma \int \frac{t \, dH(t)}{1 + tv}.
\end{equation}
This result enables the efficient computation of the function $z\to v(z)$ for $z<0$. Indeed, assuming one can compute the corresponding integral against $H$, one can tabulate \eqref{ST_inverse} on a dense grid of $v_i >0$, to find pairs $(v_i,z_i)$, where $z_i = z(v_i)$. Then for the values $z_i<0$, the \citet{silverstein1995analysis} result shows that $v(z_i)=v_i$. 
Further, the Silverstein equation can be differentiated with respect to $z$ to obtain an explicit formula for $v'$ in terms of $v,H$:
\begin{equation*}
\frac{dv}{dz} = \p{\frac{1}{v^2} -\gamma\int \frac{t^2\,dH(t)}{(1 + tv)^2}}^{-1}
\end{equation*}
Therefore, once $v(z)$ is computed for a value $z$, the computation of $v'(z)$ can be done conveniently in terms of $v(z)$ and $H$, assuming again that the integral involving $H$ can be computed. This is one of the main steps in the \textsc{Spectrode} method for computing the limit ESD  \citep{dobriban2015efficient}. Finally, $m(z)$ can be computed from $v(z)$ via the equation \eqref{dual.ST}, and $m'(z)$ can be computed from $v'(z)$ by differentiating  \eqref{dual.ST}: 
$\gamma\left(m'(z)-1/z^2\right) = v'(z)-1/z^2$.

\section{Proofs for Ridge Regression}
\label{proofs_ridge}

\subsection{Proof of Theorem \ref{theo:ridge}}
\label{ridge_proof}

The risk $r_{\lambda}(X)$ equals
\begin{align*}
r_{\lambda}(X)  & = \EE{ (x \cdot \hw_{\lambda} -  x \cdot w - \varepsilon_0)^2 \cond X} \\
& =  1+ \EE{ \{x \cdot \p{\hw_{\lambda} - w}\}^2 \cond X}
=  1+ \EE{\p{\hw_{\lambda} - w}^\top \Sigma \p{\hw_{\lambda} - w} \cond X} \end{align*}

where $\varepsilon_0$ is the noise in the new observation.  Now $\hw_\lambda = (X^\top X + \lambda \, n \, I_{p \times p})^{-1}$ $ X^\top Y$, and $Y = Xw + \varepsilon$, where $\varepsilon$ is the vector of noise terms in the original data. Hence 
\begin{align*}
\hw_{\lambda} - w & =  (X^\top X + \lambda \, n \, I_{p \times p})^{-1} X^\top ( X w + \varepsilon) - w \\
& = -  \lambda \, n (X^\top X + \lambda \, n \, I_{p \times p})^{-1} w + (X^\top X + \lambda \, n \, I_{p \times p})^{-1} X^\top \varepsilon.
\end{align*}

When we plug this back into the risk formula $r_{\lambda}(X)$, and use that $w, \varepsilon$ are conditionally independent given $X$, we see that the cross-term involving $w, \varepsilon$ cancels. The risk simplifies to 
\begin{align*}
r_{\lambda}(X)  & = 1+ (\lambda \, n)^2 \, \EE{ w^\top (X^\top X + \lambda \, n \, I_{p \times p})^{-1}   \Sigma  (X^\top X + \lambda \, n \, I_{p \times p})^{-1} w \cond X}    \\
&\ \ \ \ \ \ \ \ + \EE{ \varepsilon^\top X (X^\top X + \lambda \, n \, I_{p \times p})^{-1}   \Sigma  (X^\top X + \lambda \, n \, I_{p \times p})^{-1} X^\top \varepsilon \cond X}.
\end{align*}

Now using that the components of $w$ and $\varepsilon$ are each uncorrelated conditional on $X$, we obtain the further simplification
\begin{align*}
r_{\lambda}(X) &= 1+  (\lambda \, n)^2 \, \frac{\alpha^2}{p}  \, \tr\p{\Sigma\p{X^\top X + \lambda \, n \, I_{p \times p}}^{-2}} \\
&\ \ \ \ \ \ \ \  + \tr\p{\Sigma\p{X^\top X + \lambda \, n \, I_{p \times p}}^{-1} X^\top X \p{X^\top X + \lambda \, n \, I_{p \times p}}^{-1}}.
\end{align*}

Introducing $ \hSigma = n^{-1} X^\top X $ and $\gamma_p = p/n$, and splitting the last term in two by using $X^\top X = X^\top X + \lambda \, n \, I_{p \times p}- \lambda \, n \, I_{p \times p}$ this yields
\begin{align}
r_{\lambda}(X) &=  1 + \frac{\gamma_p}{p} \tr \p{ \Sigma \p{\hSigma +  \lambda I_{p \times p} }^{-1}} +  (\lambda \alpha^2 - \gamma_p)  \frac{\lambda }{p} \tr \p{ \Sigma \p{\hSigma +  \lambda I_{p \times p} }^{-2}}.
\label{general_lambda_risk}
\end{align}

For the particular choice $\lambda^* = \gamma_p\alpha^{-2}$, we obtain the claimed formula $r_{\lambda*}(X)$.  Next, we show the convergence of $r_{\lambda}(X)$ for arbitrary fixed $\lambda$. First, by assumption we have $\gamma_p \to \gamma$. Therefore, it is enough to show the almost sure convergence of the two functionals:
\begin{align*}
\frac{1}{p} \tr \p{ \Sigma \p{\hSigma +  \lambda I_{p \times p} }^{-1}} \text{ and  }  \frac{1}{p} \tr \p{ \Sigma \p{\hSigma +  \lambda  I_{p \times p} }^{-2}}.
\end{align*}

The convergence of the first one follows directly from the theorem of \citet{ledoit2011eigenvectors}, given in \eqref{eq:ledoit_peche}. The second is shown later in the proof of Lemma \ref{term_A} in Section \ref{pf:term_A}. In that section it is assumed that the eigenvalues of $\Sigma$ are bounded away from 0 an infinity; but one can check that in the proof of Lemma \ref{term_A} only the upper bound is used, and that holds in our case.  Therefore the risk $r_{\lambda}(X)$ converges almost surely for each $\lambda$. 

Next we find the fomulas for the limits of the two functionals. The limit of $p^{-1} \tr \p{ \Sigma \p{\hSigma +  \lambda I_{p \times p} }^{-1}}$ equals $\kappa(\lambda) = \gamma^{-1}(1/[\lambda v(-\lambda)]-1)$ by \eqref{eq:ledoit_peche}. In the proof of Lemma \ref{term_A} in Section \ref{pf:term_A}, it is shown that the limit of $p^{-1} \tr \p{ \Sigma \p{\hSigma +  \lambda I_{p \times p} }^{-2}}$ is $-\kappa'(\lambda) = [v(-\lambda)-\lambda v'(-\lambda)]/[\gamma(\lambda v(-\lambda))^2]$.

{\bf Simplified expression for $R_\lambda$:} Putting together the results above, we obtain (with $v = v(-\lambda), v' = v'(-\lambda)$) the desired claim:
\begin{align*}
R_\lambda &= 1 + \gamma \kappa(-\lambda) +   (\lambda \alpha^2 - \gamma)  \lambda [-\kappa'(-\lambda)] \\
& = \frac{1}{\lambda v} + (\lambda \alpha^2 - \gamma)  \lambda \frac{ v-\lambda v'}{\gamma(\lambda v)^2} = \frac{1}{\lambda v} \left\{ 1 + \left(\frac{\lambda \alpha^2}{\gamma} -1\right)\left(1-\frac{\lambda v'}{v}\right) \right\}.
\end{align*}

{\bf Second part: Convergence: } First, we note that for $\lambda_p^*=\gamma_p\alpha^{-2}$, the finite sample risk equals by \eqref{general_lambda_risk}

\begin{equation*}
 r_{\lambda_p^*}(X) =  1 + \frac{\gamma_p}{p} \tr \p{ \Sigma \p{\hSigma + \frac{\gamma_p}{\alpha^2} I_{p \times p} }^{-1}}. 
\end{equation*}

Introduce the function $k_p(\lambda,X) = \frac{1}{p}\tr \p{ \Sigma \p{\hSigma + \lambda I_{p \times p} }^{-1}}$. We need to show $k_p(\lambda_p^*,X) \to \kappa(\lambda^*)$. First, we notice that  $\lambda_p^* \to \lambda^*$, and $k_p(\lambda,X) \to \kappa(\lambda)$ almost surely. Second, we verify the equicontinuity of $k_p$ as a function of $\lambda$, by proving the stronger claim that the derivatives of $k_p$ are uniformly bounded: 
$$|k_p'(\lambda,X)| = \left|\frac{1}{p} \tr \p{ \Sigma \p{\hSigma +  \lambda I_{p \times p} }^{-2}}\right|\le \left|\frac{\tr \Sigma}{p}\right|\le K.$$
The last inequality holds for some finite constant $K$ because $\mathbb{E}_{F_\Sigma}X = p^{-1}\tr \Sigma \rightarrow \mathbb{E}_{H}{X} <\infty$ by the convergence and boundedness of the spectral distribution of $\Sigma$.

Therefore, by the equicontinuity of the family $k_p(\lambda,X)$ as a function of $\lambda$, we obtain $k_p(\lambda_p^*,X) \to \kappa(\lambda^*)$. Further, the explicit form of $r_{\lambda_p^*}$ shows that the limit equals $1 + \gamma \kappa(\lambda^*)=1/(\lambda^*v(-\lambda^*))$ by \eqref{eq:ledoit_peche}, as desired.

{\bf Optimality of $\lambda^*$:} The limiting risk $R_\lambda$ is the same if we assume Gaussian observations. In this case, the finite sample Bayes-optimal choice for $\lambda_p$ is $\lambda_p^* = \gamma_p\alpha^{-2}$. We will use the following classical lemma to conclude that the limit of the minimizers $\lambda_p^*$ is the minimizer of the limit. 

\begin{lemma}
Let $f_n(x):\mathcal{I}\to\mathcal{I}$ be an equicontinuous family of functions on an interval $\mathcal{I}$, converging pointwise to a continuous function, $f_n(x) \to f(x)$. Suppose $y_n$ is a minimizer of $f_n$ on $\mathcal{I}$, and $y_n \to y$. Then $y$ is a minimizer of $f$.
\label{equicont_min}
\end{lemma}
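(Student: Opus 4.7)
\textbf{Proof plan for Lemma \ref{equicont_min}.} The plan is to pass to the limit in the defining inequality $f_n(y_n) \le f_n(x)$, which holds for every $n$ and every $x \in \mathcal{I}$ because $y_n$ minimizes $f_n$. Fix an arbitrary $x \in \mathcal{I}$; by pointwise convergence $f_n(x) \to f(x)$, so the only real task is to show that $f_n(y_n) \to f(y)$. Once this is established, letting $n \to \infty$ in $f_n(y_n) \le f_n(x)$ gives $f(y) \le f(x)$, and since $x$ was arbitrary, $y$ is a minimizer of $f$.

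To prove $f_n(y_n) \to f(y)$, I would use the standard triangle-inequality split
\begin{equation*}
|f_n(y_n) - f(y)| \le |f_n(y_n) - f_n(y)| + |f_n(y) - f(y)|.
\end{equation*}
The second term tends to $0$ by pointwise convergence at the fixed point $y$. For the first term, fix $\varepsilon > 0$ and invoke equicontinuity at $y$: there exists $\delta > 0$ such that $|z - y| < \delta$ implies $|f_n(z) - f_n(y)| < \varepsilon$ \emph{uniformly in $n$}. Since $y_n \to y$, we have $|y_n - y| < \delta$ for all $n$ sufficiently large, and therefore $|f_n(y_n) - f_n(y)| < \varepsilon$ for such $n$. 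Combining the two estimates yields $\limsup_n |f_n(y_n) - f(y)| \le \varepsilon$, and $\varepsilon$ was arbitrary.

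\textbf{Expected obstacle.} There is no real mathematical difficulty here; the result is a textbook consequence of equicontinuity. The only point that requires care is to apply equicontinuity at the \emph{fixed} limit point $y$ rather than at the moving points $y_n$: this is what lets the $\delta$ be chosen uniformly in $n$, and it is what allows the argument to bridge the gap between pointwise convergence of $f_n$ and the fact that we are evaluating $f_n$ at the varying sequence $y_n$. One should also note that we never need uniform convergence of $f_n$ on all of $\mathcal{I}$; equicontinuity together with pointwise convergence suffices for this local argument around $y$.
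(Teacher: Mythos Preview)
Your proposal is correct and follows essentially the same approach as the paper: start from the minimizer inequality $f_n(y_n)\le f_n(x)$, split $f_n(y_n)-f(y)$ into $[f_n(y_n)-f_n(y)]+[f_n(y)-f(y)]$, control the first piece by equicontinuity (using $y_n\to y$) and the second by pointwise convergence, then pass to the limit. Your write-up is in fact slightly more detailed than the paper's, spelling out the $\varepsilon$--$\delta$ use of equicontinuity at the fixed point $y$.
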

\begin{proof}
Since $y_n$ is a minimizer of $f_n$, we have 
\begin{equation}
\label{minimizer}
f_n(y_n)\le f_n(x),
\end{equation}

for all $x \in \mathcal{I}$. But $f_n(y_n)=f_n(y_n)-f_n(y) + f_n(y)-f(y) + f(y)$. As $n\to \infty$, $f_n(y_n)-f_n(y)\to0$ by the convergence of $y_n \to y$ and by the equicontinuity of the family $\{f_n\}$; and  $f_n(x)-f(x)\to0$ for all $x$ by the convergence of $f_n\to f$. Therefore, taking the limit as $n\to \infty$ in \eqref{minimizer}, we obtain $f(y) \le f(x)$ for any $x \in \mathcal{I}$, showing that $y$ is a minimizer of $f$.
\end{proof}

We use the notation $r_{\lambda,p}(X)=r_\lambda(X)$ for the risk, showing that it depends on $p$. Fix an arbitrary sequence of $X$ matrices on the event having probability one where $r_{\lambda,p}(X) \to R_{\lambda}$. By an argument similar to the one given above, the sequence of functions $f_p(\lambda)=r_{\lambda,p}(X)$ equicontinuous in $\lambda$ on the set $\lambda\ge 0$. Since $f_p(\lambda)$ converges to $R_\lambda$ for each $\lambda>0$, and $\lambda_p^*$ is a sequence of minimizers of $f_p(\lambda)$ that converges to $\lambda^*$, by  Lemma \ref{equicont_min} $\lambda^*$ is a minimizer of $R_\lambda$.

\subsection{The Risk of Ridge Regression with Identity Covariance}
\label{pf:ridge_identity}

From Theorem \ref{theo:ridge} it follows that the risk equals the limit $r_\lambda(X)$ \eqref{general_lambda_risk}. Since $\Sigma = I$ this simplifies to 
\begin{align*}
r_{\lambda}(X) &=  1 + \frac{\gamma_p}{p} \tr \p{\p{\hSigma +  \lambda I_{p \times p} }^{-1}} +  (\lambda \alpha^2 - \gamma_p)  \frac{\lambda }{p} \tr \p{ \p{\hSigma +  \lambda I_{p \times p} }^{-2}}.
\end{align*}
By the Marchenko-Pastur theorem, Eq. \eqref{eq:mp_lemma}, it follows that $p^{-1} \tr \p{\p{\hSigma +  \lambda I_{p \times p} }^{-1}}$ $\to m_I(-\lambda; \gamma) $ is defined in \eqref{identity_stieltjes}.

In the proof of Lemma \ref{term_A} in Section \ref{pf:term_A}, it is shown that $p^{-1} \tr \p{ \p{\hSigma +  \lambda I_{p \times p} }^{-2}}$ $\to -\kappa'(\lambda)$. For an identity covariance matrix $\kappa(\lambda) = m_I(-\lambda; \gamma)$ by definition of $\kappa(\lambda)$. Therefore, the limit of the second term equals $m'_I(-\lambda;\gamma)$. We obtain the desired formula: $R_\lambda = 1 + \gamma m_I(-\lambda; \gamma) + \lambda \left( \lambda \alpha^2  - \gamma \right)  m'_I(-\lambda;\gamma)$.
For $\lambda^* = \gamma\alpha^{-2}$, we obtain $R^*=  1 + \gamma m_I(-\lambda^*;\gamma)$. It is a matter of simple algebra to verify the formula \eqref{eq:closed_form} for the risk. 

\subsection{Proof of strong-signal limit of ridge}
\label{pf:strong_signal}

We will first show the results for the strong-signal limit. We start by verifying the following lemma.

\begin{lemma}
Suppose the limit population eigenvalue distribution $H$ has support contained in a compact set bounded away from 0. Let $v(z)$ be the companion Stieltjes transform of the ESD. Then
\begin{enumerate}
\item If $\gamma<1$, $\lim_{\lambda\downarrow0}\lambda v(-\lambda) = 1-\gamma$.
\item If $\gamma>1$, $\lim_{\lambda\downarrow0} v(-\lambda) = v(0)$.
\item If $\gamma=1$, $\lim_{\lambda\downarrow0} \lambda v(-\lambda)^2 = \mathbb{E}_H[T^{-1}]$.
\end{enumerate}
\end{lemma}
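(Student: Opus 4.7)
My strategy is to work entirely from the Silverstein equation \eqref{silv.eq} evaluated at $z = -\lambda$. Writing $v = v(-\lambda)$ and multiplying through by $v$, the identity $\frac{tv}{1+tv} = 1 - \frac{1}{1+tv}$ gives the rearranged form
\begin{equation}
\lambda v + \gamma - 1 \;=\; \gamma \int \frac{dH(t)}{1+tv}, \tag{$\star$}
\end{equation}
which will drive all three cases. Let $[a,b] \subset (0,\infty)$ be the compact interval containing $\mathrm{supp}(H)$; since $v > 0$ for $\lambda > 0$, the function $g(v) := \gamma \int \frac{dH(t)}{1+tv}$ is continuous, strictly decreasing on $(0,\infty)$, with $g(0) = \gamma$ and $g(v) \to 0$ as $v \to \infty$. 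This monotonicity makes $(\star)$ amenable to direct asymptotic analysis.

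\textbf{Case $\gamma<1$.} I would first show $\lambda v \to 1-\gamma$ using a squeezing argument. Since the RHS of $(\star)$ is non-negative, $\lambda v \geq 1-\gamma > 0$; and since $g(v) \leq \gamma$, we also have $\lambda v \leq 1$. If, along some subsequence $\lambda_k \downarrow 0$, $v(-\lambda_k)$ stayed bounded, the LHS of $(\star)$ would tend to $\gamma-1<0$ while the RHS remains $\geq 0$, a contradiction. Hence $v \to \infty$, so by monotone/dominated convergence $g(v) \to 0$, and taking limits in $(\star)$ gives $\lambda v \to 1-\gamma$.

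\textbf{Case $\gamma>1$.} Here I would first observe that $(\star)$ at $\lambda=0$ reads $g(v) = \gamma-1 \in (0,\gamma)$, which by the monotonicity of $g$ has a unique positive solution, and this defines $v(0)$. For small $\lambda > 0$, the solution $v(-\lambda)$ must stay in a compact subset of $(0,\infty)$: if $v \to \infty$ then $g(v) \to 0$ while the LHS stays $\geq \gamma-1 > 0$; and if $v \to 0$, then $g(v) \to \gamma$ while the LHS is $< \gamma$ for small $\lambda$. Any subsequential limit $v^*$ therefore solves $g(v^*) = \gamma-1$, which by uniqueness equals $v(0)$. Hence $v(-\lambda) \to v(0)$.

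\textbf{Case $\gamma=1$.} Equation $(\star)$ collapses to $\lambda v = g(v)/\gamma = \int \frac{dH(t)}{1+tv}$. The same subsequence argument as in Case~1 shows $v \to \infty$. Then I would use the expansion
\begin{equation*}
\frac{1}{1+tv} \;=\; \frac{1}{tv} - \frac{1}{tv(1+tv)},
\end{equation*}
valid on $[a,b]$, to get $\int \frac{dH(t)}{1+tv} = \frac{\mathbb{E}_H[T^{-1}]}{v} + O(v^{-2})$, where the error is controlled uniformly because $t \geq a > 0$. Substituting into $\lambda v = \int \frac{dH}{1+tv}$ and multiplying by $v$ yields $\lambda v^2 = \mathbb{E}_H[T^{-1}] + O(v^{-1}) \to \mathbb{E}_H[T^{-1}]$.

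\textbf{Anticipated obstacle.} The only place requiring real care is the continuity/uniqueness argument in Case~2: one needs to rule out $v(-\lambda) \to 0$ and $v(-\lambda) \to \infty$ along subsequences. The lower bound away from 0 is the slightly more delicate side, but it follows from $(\star)$ since $g(v) \to \gamma$ as $v \downarrow 0$ while $\lambda v + \gamma - 1 < \gamma$ for any $\lambda v < 1$, and one can verify $\lambda v$ must be bounded (otherwise, dividing $(\star)$ by $v$ and sending $v \to \infty$ gives $\lambda = 0$, which is inconsistent with $\lambda v \to \infty$). The rest is bookkeeping with monotone/dominated convergence.
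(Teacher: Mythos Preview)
Your argument is correct, and it takes a genuinely different route from the paper. The paper does \emph{not} work from the rearranged Silverstein identity $(\star)$; instead it invokes structural facts about the support of the limiting ESDs $F$ and $\underline F$ (citing Chapter~6 of Bai--Silverstein): for $\gamma<1$ the ESD $F$ is supported away from $0$, so dominated convergence on $m(-\lambda)=\mathbb{E}_F[(Y+\lambda)^{-1}]$ gives $\lambda m(-\lambda)\to 0$ and hence $\lambda v(-\lambda)\to 1-\gamma$ via \eqref{dual.ST}; for $\gamma>1$ it is $\underline F$ that is supported away from $0$, so $v(-\lambda)=\mathbb{E}_{\underline F}[(\underline Y+\lambda)^{-1}]\to v(0)$ directly; and for $\gamma=1$ it first shows $v(-\lambda)\to\infty$ by contradiction with the Silverstein equation, then multiplies \eqref{silv.eq} by $v$ to get $zv^2=-\int(1/v+t)^{-1}dH(t)$ and passes to the limit. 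Your approach trades these external spectral-support theorems for a purely analytic squeeze/monotonicity argument on $(\star)$, using only the hypothesis on $\mathrm{supp}(H)$. The payoff is self-containment: you never need to know where $F$ or $\underline F$ lives. The paper's version, on the other hand, makes the underlying phenomenon more transparent---the three regimes correspond exactly to whether $F$, $\underline F$, or neither has mass at $0$---at the cost of importing a nontrivial support result. Your Case~3 expansion and the paper's multiplication trick are essentially the same computation in different clothing.
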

\begin{proof} Let $\underline F$ be the ESD of the companion matrix $n^{-1}XX^\top$. It is related to $F$ via $\underline F = (1-\gamma)\delta_0 + \gamma F$.
It is well known \citep[e.g.,][Chapter 6]{bai2010spectral}, that for $H$ whose support is contained in a compact set bounded away from 0, the following hold for $F$ and $\underline F$: if $\gamma<1$, then $F$ has support contained in a compact set bounded away from 0, and $\underline F$ has a point mass of $1-\gamma$ at 0; while if $\gamma>1$, then $\underline F$ has support contained in a compact set bounded away from 0, and $F$ has a point mass of $1-\gamma^{-1}$ at 0.

If $\gamma<1$, we let $Y$ be distributed according to $F$. Since $Y>c>0$ for some $c$, we have by the dominated convergence theorem
$$\lim_{\lambda\downarrow0} \lambda m(-\lambda) = \lim_{\lambda\downarrow0} \mathbb{E}_F\left[\frac{\lambda}{\lambda+Y}\right] = 0.$$
Since $\lambda v(-\lambda) = 1 - \gamma + \gamma \lambda m(-\lambda)$, this shows $\lim_{\lambda\downarrow0}\lambda v(-\lambda) = 1-\gamma$. 

Similarly, if $\gamma>1$, we let $\underline Y$ be distributed according to $\underline F$. Since $\underline Y>c>0$ for some $c$, we have $\lim_{\lambda\downarrow0} \lambda v(-\lambda) = \lim_{\lambda\downarrow0} \mathbb{E}_{\underline F}\left[\frac{1}{\lambda+\underline Y}\right] = \mathbb{E}_{\underline F}\left[\frac{1}{\underline Y}\right] =v(0).$
This shows $\lim_{\lambda\downarrow0} v(-\lambda) = v(0)$.  Further, we can find the equation for $v(0)$ by taking the limit as $z \to 0$ in the Silverstein equation \eqref{silv.eq}. We see that $v$ is well-defined, bounded away from 0, and has positive imginary part for $z\in \mathbb{C}^+$ in a neighborhood of $0$, so the limit is justified by the dominated convergence theorem. This leads to the equation that was claimed: 
$$\frac{1}{v(0)} = \gamma \int \frac{t \, dH(t)}{1 + tv(0)}.$$

Finally, for $\gamma=1$, the Silverstein equation \eqref{silv.eq} is equivalent to
$$zv(z) = - \int \frac{ \, dH(t)}{1 + tv(z)}.$$
Therefore the real quantity $\lim_{\lambda \downarrow0}v(-\lambda) = \lim_{\lambda\downarrow0} \mathbb{E}_{\underline F}\left[\frac{1}{\lambda+\underline Y}\right]$ is not finite; otherwise, we could take the limit as $z\to 0$, $z \in \mathbb{C}^+$, similarly to above, and obtain the contradiction that $0 = \int (1 + tv(0))^{-1}\, dH(t) \neq 0$. Since $v(-\lambda)$ is increasing as $\lambda \downarrow 0$, it follows that $\lim_{\lambda \downarrow0}v(-\lambda) = +\infty$. Multiplying the Silverstein equation above with $v(z)$

$$zv(z)^2 = - \int \frac{ \, dH(t)}{1/v(z) + t}.$$

We can take the limit in this equation as $\lambda = -z \downarrow0$ similarly to above, and note that the right hand side converges to $-\int t^{-1}\, dH(t) = -\mathbb{E}_H[T^{-1}]$ by the dominated convergence theorem. This shows $\lim_{\lambda\downarrow0} \lambda v(-\lambda)^2 = \mathbb{E}_H[T^{-1}]$.
\end{proof}

Consequently, using the formula for the optimal risk from Theorem \ref{theo:ridge}, we have for $\gamma<1$, $\lim_{\alpha^2 \to \infty} R^*(H,\alpha^2,\gamma)   = (1-\gamma)^{-1}$. For $\gamma>1$, 
$$\lim_{\alpha^2 \to \infty} \alpha^{-2}R^*(H,\alpha^2,\gamma) =  \lim_{\alpha^2 \to \infty}\frac{1}{\frac{\gamma}{\alpha^2}v(-\frac{\gamma}{\alpha^2})\alpha^2} = \frac{1}{\gamma v(0)}.$$
Finally, for $\gamma=1$, 
$$\lim_{\alpha^2 \to \infty} \alpha^{-1}R^*(H,\alpha^2,\gamma) =  \lim_{\alpha^2 \to \infty}\frac{1}{\frac{1}{\alpha^2}v(-\frac{1}{\alpha^2})\alpha} = \lim_{\alpha^2 \to \infty}\frac{1}{\left(\frac{1}{\alpha^2}v(-\frac{1}{\alpha^2})^2\right)^{1/2}} = \frac{1}{{E}_H[T^{-1}]^{1/2}}.$$
The explicit formula for $R^*(\alpha^2,1)$ is obtained by plugging in the expression \eqref{identity_stieltjes} into the formula for the optimal risk.

Next we argue about the weak-signal limit. Using the above notations, $\lambda v(-\lambda) = \mathbb{E}_{\underline F}[\lambda/(\lambda + \underline Y)]$, so by the dominated convergence theorem, $\lim_{ \lambda \to \infty}$ $\lambda v(-\lambda) = 1$, and consequently $\lim_{\alpha^2 \to 0}R^*(H,\alpha^2,\gamma)  = 1$. Furthermore, $\lambda[1-\lambda v(-\lambda)] = \gamma  \mathbb{E}_{F}[Y \lambda/(\lambda + Y)]$, leading to $\lim_{ \lambda \to \infty}\lambda[1-\lambda v(-\lambda)] = \gamma\mathbb{E}_F[Y] = \gamma\mathbb{E}_H[T]$, and consequently $\smash{\lim_{\alpha^2 \rightarrow 0} (R^*(H,\alpha^2,\gamma) - 1)/\alpha^2 = \mathbb{E}_HT}$.

\section{Proofs for Regularized Discriminant Analysis}
\label{proofs_rda}

\subsection{Proof of Theorem \ref{theo:rda}}
\label{pf:theo:lda}

We will first outline the high-level steps to prove our main result for classification, Theorem \ref{theo:rda}. We break down the proof into several lemmas, whose proof is deferred to later sections. These lemmas are then put together to prove the theorem in the final part of the proof outline. 

We start with the well-known finite-sample formula for the expected test error of an arbitrary linear classifier $h_{w,b}(x) = \sign(w \cdot x + b)$ in the Gaussian model \eqref{eq:setup}, conditional on the weight parameters $w,b$ and the means $\mu_{\pm 1}$: 

\begin{equation}
\Err_0\p{w,b} =\pi_- \Phi\p{\frac{w^{\top} \mu_{-1} + b}{\sqrt{w^\top \Sigma w}  }} + \pi_+\Phi\p{-\frac{w^{\top} \mu_1+ b}{\sqrt{w^\top \Sigma w}  }}. 
\label{class_err}
\end{equation}


In RDA the weight vector is $\hw_{\lambda} =  \p{\hSigma_c + \lambda I_{p \times p}}^{-1}\hdelta \, $  and the offset is $\hat{b} = \hdelta^\top \p{\hSigma_c + \lambda I_{p \times p}}^{-1}\hmu$. 
The first simplification we notice is that $\hat{b} \to_{a.s.} 0$. 

\begin{alemma} Under the conditions of Theorem \ref{theo:rda}, we have $\hat{b} \to_{a.s.} 0$.
\label{offset_to_zero}
\end{alemma}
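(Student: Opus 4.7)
The plan is to decompose $\hdelta$ and $\hmu$ into their population and sample-noise components, expand $\hat{b}$ into four bilinear forms, and show each one vanishes almost surely by combining second-moment estimates with Borel--Cantelli.

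Under Gaussianity and the equal class sizes $n_{+1}=n_{-1}=n/2$, write $\hmu_{\pm 1}=\mu_{\pm 1}+\bar z_{\pm 1}$ with $\bar z_{-1},\bar z_{+1}$ independent and $\mathcal{N}(0,2\Sigma/n)$. Setting $\bar u=(\bar z_{-1}+\bar z_{+1})/2$ and $\bar w=(\bar z_{+1}-\bar z_{-1})/2$, these are uncorrelated (hence independent) Gaussian vectors each with covariance $\Sigma/n$, and $\hmu=\bmu+\bar u$, $\hdelta=\delta+\bar w$. By the classical orthogonality of sample mean and sample covariance in the Gaussian model, $(\bar u,\bar w)$ is independent of $A:=(\hSigma_c+\lambda I_{p\times p})^{-1}$; under Assumption~\ref{assume:C}, $\delta$ is further independent of $(\bmu,\bar u,\bar w,A)$ and $\bmu$ is independent of $(\delta,\bar u,\bar w,A)$. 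Expanding,
$$
\hat b = \delta^\top A\bmu + \delta^\top A\bar u + \bar w^\top A\bmu + \bar w^\top A\bar u .
$$

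Each of the four pieces is handled by computing its conditional second moment using the deterministic bounds $\|A\|_{op}\le 1/\lambda$, $\|\Sigma\|_{op}\le B$, and $\mathrm{tr}(\Sigma)\le Bp$. For the first cross term, $\mathbb{E}[(\delta^\top A\bmu)^2\mid A,\bmu]=(\alpha^2/p)\|A\bmu\|_2^2\le (\alpha^2/\lambda^2)\|\bmu\|_2^2/p$, which by the bound $\|\bmu\|_2^2\le C p^{1/2-\zeta}$ of Assumption~\ref{assume:C} is $O(p^{-1/2-\zeta})$. For $\bar w^\top A\bmu$, conditioning on $(A,\bmu)$ yields $\bmu^\top A(\Sigma/n)A\bmu\le (B/\lambda^2)\|\bmu\|_2^2/n=O(p^{1/2-\zeta}/n)$. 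The remaining two terms are even smaller: $\mathbb{E}[(\delta^\top A\bar u)^2\mid A]\le \alpha^2 B/(n\lambda^2)$ after taking expectation in $\bar u$, and $\mathbb{E}[(\bar w^\top A\bar u)^2\mid A]=n^{-2}\mathrm{tr}((A\Sigma)^2)\le pB^2/(n^2\lambda^2)=O(1/n)$.

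To upgrade these $L^2$ bounds to almost-sure convergence I would control higher moments: for the two Gaussian-on-Gaussian terms the Hanson--Wright inequality applied conditionally on $A$ gives subexponential tails, while for the two terms linear in $\delta$ the $(4+\eta)$-moment hypothesis in Assumption~\ref{assume:C} combined with Rosenthal's inequality boosts the variance bound to a fourth-moment bound of order $O(p^{-1-2\zeta})$ (the Gaussian-quartic contribution $[(\alpha^2/p)\|A\bmu\|_2^2]^2$ and the $\sum a_i^4\,\mathbb{E}\delta_i^4$ term are both of this order). Markov with the fourth moment then makes the tails summable in $p$, and Borel--Cantelli gives $\hat b\to_{a.s.} 0$. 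The main obstacle is bookkeeping of the independence structure; once the Gaussian mean/covariance orthogonality is invoked to decouple $\hSigma_c$ from the sample means, everything is driven by the fact that $\|\bmu\|_2^2$ grows strictly slower than $p^{1/2}$, which is precisely what renders each bilinear piece negligible.
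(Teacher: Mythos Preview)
Your proposal is correct and follows essentially the same route as the paper: the four-term decomposition coming from $\hdelta=\delta+\bar w$ and $\hmu=\bmu+\bar u$ is identical, and the critical estimate---the fourth-moment bound $O(p^{-1-2\zeta})$ for $\delta^\top A\bmu$ driven by $\|\bmu\|_2^2\le Cp^{1/2-\zeta}$---is exactly the paper's computation, which it carries out by direct expansion rather than invoking Rosenthal by name. The only cosmetic difference is in the remaining three terms: the paper treats $\bar w^\top A\bmu$ by noting that the moments of $n^{-1/2}\Sigma^{1/2}Z$ scale like those of $\delta$ and recycling the first argument, and dispatches the two bilinear terms $\delta^\top A\bar u$ and $\bar w^\top A\bar u$ by citing a standard bilinear-form lemma (Proposition~4.1 of Couillet--Debbah), whereas you reach for Hanson--Wright and Rosenthal; both choices yield summable tails and the same Borel--Cantelli conclusion.
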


Lemma \ref{offset_to_zero} is proved in Section \ref{pf:offset_to_zero}. Since the denominator in the error rate \eqref{class_err} converges almost surely to a fixed, strictly positive constant (see Lemmas \ref{term_A} and \ref{term_C}), this will allow us to use the following simpler formula - that does not involve $\hat b$ - in evaluating the limit of the error rate. 

\begin{equation}
\Err_1\p{w} = \pi_-\Phi\p{\frac{w^{\top} \mu_{-1} }{\sqrt{w^\top \Sigma w}  }} + \pi_+\Phi\p{-\frac{w^{\top} \mu_1}{\sqrt{w^\top \Sigma w}  }}. 
\label{class_err_2}
\end{equation}

Recall that $\mu_{-1} = \bar{\mu} -\delta$,  $\mu_{+1} = \bar{\mu}+\delta$. The second simplification we notice is that $\hw_{\lambda}^\top\bar{\mu} \to_{a.s.} 0$. 

\begin{alemma} Under the conditions of Theorem \ref{theo:rda}, we have $\hw_{\lambda}^\top\bar{\mu} \to_{a.s.} 0$.
\label{reduce_to_same_noncentrality}
\end{alemma}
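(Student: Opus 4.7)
The plan is to decompose $\hat{w}_\lambda^\top\bar\mu$ into two pieces, each controlled by a different mechanism. Write
\begin{equation*}
\hat\delta = \delta + \xi, \qquad \xi := \tfrac12\bigl[(\hat\mu_{+1}-\mu_{+1}) - (\hat\mu_{-1}-\mu_{-1})\bigr],
\end{equation*}
and set $A_\lambda := (\hat\Sigma_c+\lambda I_{p\times p})^{-1}$. Then
\begin{equation*}
\hat{w}_\lambda^\top \bar\mu \;=\; \bar\mu^\top A_\lambda\,\delta \;+\; \bar\mu^\top A_\lambda\,\xi.
\end{equation*}
I would show each term tends to $0$ almost surely, using that $\|A_\lambda\|_{op}\le \lambda^{-1}$ deterministically, that $\|\Sigma\|_{op}\le B$, and that $\|\bar\mu\|_2^2 \le C p^{1/2-\zeta}$ almost surely by Assumption \ref{assume:C}.

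For the first term, $\delta$ is independent of $(\bar\mu, X)$ with i.i.d. coordinates, mean $0$, variance $\alpha^2/p$ and bounded $(4+\eta)$-th moment $\le C p^{-(2+\eta/2)}$. Conditioning on $(\bar\mu, X)$ makes $\bar\mu^\top A_\lambda\delta$ a weighted sum of the $\delta_i$'s, so
\begin{equation*}
\mathrm{Var}\bigl(\bar\mu^\top A_\lambda \delta \,\big|\, \bar\mu, X\bigr) \;=\; \tfrac{\alpha^2}{p}\,\|A_\lambda\bar\mu\|_2^2 \;\le\; \tfrac{\alpha^2 C}{\lambda^2}\,p^{-1/2-\zeta}.
\end{equation*}
To upgrade this to almost-sure convergence I would apply a Rosenthal--Marcinkiewicz--Zygmund inequality for the $(4+\eta)$-th moment of the centered linear form, which yields $\mathbb{E}[(\bar\mu^\top A_\lambda\delta)^{4+\eta}\mid \bar\mu,X] \lesssim p^{-(1+\zeta)(2+\eta/2)}$, a power of $p$ strictly less than $-1$. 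Chebyshev with this $(4+\eta)$-th moment and Borel--Cantelli (combined with the almost-sure bound on $\|\bar\mu\|_2^2$) then give $\bar\mu^\top A_\lambda\delta \to_{a.s.} 0$.

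For the second term, the key observation is Cochran's theorem: within each class the Gaussian sample mean is independent of the pooled centered covariance matrix $\hat\Sigma_c$. Consequently $\xi$ is independent of $A_\lambda$, and $\xi\mid A_\lambda \sim \mathcal{N}(0,\Sigma/n)$. Hence, conditional on $(\bar\mu, A_\lambda)$, the quantity $\bar\mu^\top A_\lambda\xi$ is a centered Gaussian with variance
\begin{equation*}
\tfrac{1}{n}\,\bar\mu^\top A_\lambda\,\Sigma\, A_\lambda\bar\mu \;\le\; \tfrac{B}{n\lambda^2}\,\|\bar\mu\|_2^2 \;\le\; \tfrac{BC}{n\lambda^2}\,p^{1/2-\zeta} \;\longrightarrow\; 0.
\end{equation*}
Gaussian tail bounds give an exponentially small probability that $|\bar\mu^\top A_\lambda\xi|$ exceeds any fixed $\varepsilon>0$, which is trivially summable, so Borel--Cantelli yields $\bar\mu^\top A_\lambda\xi \to_{a.s.} 0$.

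The main obstacle is the first term: because $\bar\mu$ is only controlled almost surely (not in expectation) and because $\delta$ has only finite $(4+\eta)$-th moment, one must choose the Rosenthal-type moment inequality carefully so that the exponents $\zeta$ and $\eta$ guarantee a summable tail. The exact constants in Assumption \ref{assume:C} are tuned so that this works with some room to spare, so no additional hypothesis is needed.
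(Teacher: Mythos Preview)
Your proposal is correct and coincides with the paper's argument: the paper's proof simply observes that $\hat w_\lambda^\top\bar\mu=\hat\delta^\top(\hat\Sigma_c+\lambda I)^{-1}\bar\mu$ equals $T_1+T_2$ from the decomposition in the preceding offset lemma, which are exactly your two terms, and reuses the fourth-moment/Borel--Cantelli bounds already established there. Two minor remarks: your Rosenthal exponent for the first term should read $p^{-(1/2+\zeta)(2+\eta/2)}$ rather than $p^{-(1+\zeta)(2+\eta/2)}$ (still summable, so harmless), and for the second term the paper simply recycles the moment argument (treating $n^{-1/2}Z$ like $\delta$) instead of your Gaussian-tail shortcut, which is a cosmetic difference.
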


Lemma \ref{reduce_to_same_noncentrality} is proved in Section \ref{pf:reduce_to_same_noncentrality}.  By the same argument as above, this Lemma allows us to use the following even simpler formula - that does not involve $\bmu$ - in evaluating the limit of the error rate: 
\begin{equation}
\Err_{2}\p{w} = \Phi\p{-\frac{w^{\top} \delta }{\sqrt{w^\top \Sigma w}  }}.
\label{class_err_3}
\end{equation}

To show the convergence of 
$\smash{\Phi\p{-\hw_{\lambda}^{\top} \delta/\sqrt{\hw_{\lambda}^\top \Sigma \hw_{\lambda}}  }}$, we argue that the linear and quadratic forms involving $\hw_{\lambda}$ concentrate around their means, and then apply random matrix results to find the limits of those means. We start with the numerator. 

\begin{alemma} We have the limit $\hw_{\lambda}^{\top} \delta \rightarrow_{a.s.} \alpha^2 m(-\lambda)$, where $m(z)$ is the Stieltjes transform of the limit empirical eigenvalue distribution $F$ of the covariance matrix $\hSigma_c$.
\label{simple_lemma}
\end{alemma}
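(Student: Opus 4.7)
The strategy is to decompose $\hw_\lambda^\top\delta$ into a quadratic form in $\delta$ plus a cross-term that vanishes, and then to handle the quadratic form by standard concentration combined with the Marchenko--Pastur convergence of $F_{\hSigma_c}$.

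Writing $\hmu_{\pm 1} = \mu_{\pm 1} + \bar{\varepsilon}_{\pm 1}$ for the within-class sample means of the noise $x_i - \mu_{y_i}$, we have $\hdelta = \delta + u$ with $u := (\bar\varepsilon_{+1}-\bar\varepsilon_{-1})/2$. Setting $A := (\hSigma_c + \lambda I_{p\times p})^{-1}$, this gives
\[
\hw_\lambda^\top\delta \;=\; \hdelta^\top A\,\delta \;=\; \delta^\top A\,\delta \;+\; u^\top A\,\delta.
\]
Two independence facts drive the proof. First, $\delta$ is independent of everything generated by the noise (in particular of $A$ and $u$), since it only enters the data through $\mu_{\pm 1} = \bar\mu\pm\delta$. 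Second, under Assumption C's Gaussian model, the sample means $\hmu_{\pm 1}$ are independent of the within-class residuals $x_i-\hmu_{y_i}$; hence $u$ is independent of $\hSigma_c$, and thus of $A$.

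For the main term, condition on $A$: the coordinates of $\delta$ are i.i.d.\ with mean $0$, variance $\alpha^2/p$, and the $(4+\eta)$-th moment bound from Assumption C. Because $\|A\|_{op}\le 1/\lambda$ is uniformly bounded, the standard concentration-of-quadratic-forms inequality (Lemma B.26 of \citet{bai2010spectral}, used repeatedly elsewhere in the supplement) gives
\[
\delta^\top A\,\delta \;-\; \frac{\alpha^2}{p}\tr(A) \;\longrightarrow_{a.s.}\; 0,
\]
the a.s.\ convergence following from the $(4+\eta)$-moment bound via Markov's inequality and Borel--Cantelli. Since centering by $\hmu_{y_i}$ is a rank-$2$ perturbation of the uncentered sample covariance, $F_{\hSigma_c}$ has the same weak limit $F$ as in Assumption A, and so $p^{-1}\tr(A)\to_{a.s.} m(-\lambda)$ by \eqref{eq:mp_lemma}. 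Combining, $\delta^\top A\,\delta\to_{a.s.}\alpha^2 m(-\lambda)$.

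For the cross term, condition on $(A,\delta)$: since $u$ is Gaussian, mean zero, independent of $(A,\delta)$, with covariance $\Sigma/n$ (summing $\bar\varepsilon_{\pm 1}$'s contributions, each with covariance $2\Sigma/n$ and scaled by $1/2$), we see that $u^\top A\delta\mid(A,\delta)$ is centered Gaussian with variance
\[
\frac{1}{n}\,\delta^\top A\,\Sigma\, A\,\delta \;\le\; \frac{\|\Sigma\|_{op}}{\lambda^2\,n}\,\|\delta\|_2^2,
\]
and $\|\delta\|_2^2\to_{a.s.}\alpha^2$ by the same concentration lemma. Hence $\mathrm{Var}(u^\top A\delta\mid A,\delta) = O(1/n)$ almost surely, and Gaussian tail bounds plus Borel--Cantelli yield $u^\top A\,\delta\to_{a.s.} 0$. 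Together with the previous paragraph this gives the claim.

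The main obstacle I anticipate is the upgrade from in-probability to almost-sure convergence of the quadratic form $\delta^\top A\,\delta$: it is here that the higher-moment bound in Assumption C is essential, and one needs to invoke the Bai--Silverstein-type quadratic-form inequality carefully because $A$ is itself random (requiring conditioning on $A$ so that the inequality is applied to $\delta$ alone). Everything else is book-keeping with independence and the operator-norm bound $\|A\|_{op}\le 1/\lambda$.
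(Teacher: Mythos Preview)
Your proposal is correct and follows essentially the same route as the paper: the same decomposition $\hw_\lambda^\top\delta=\delta^\top A\delta+(\hdelta-\delta)^\top A\delta$, the same concentration-of-quadratic-forms argument for the main term together with the Marchenko--Pastur limit of $F_{\hSigma_c}$ (the paper spells out the ``centering doesn't matter'' step in a separate lemma, but your rank-two remark is the right idea), and the same use of the independence of $u=\hdelta-\delta$ from $(\delta,\hSigma_c)$ for the cross term. The only genuine difference is how you dispatch the cross term: the paper writes $u=n^{-1/2}\Sigma^{1/2}Z$ and appeals to a general bilinear-form lemma (Proposition~4.1 of \citet{couillet2011random}), whereas you exploit Gaussianity directly via a tail bound on the conditional law $u^\top A\delta\mid(A,\delta)$. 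Your route is more elementary here since the model is Gaussian; the paper's lemma would also cover non-Gaussian $u$, which is not needed for this statement.
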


Lemma \ref{simple_lemma} is proved in Section \ref{pf:simple_lemma}.  To prove the convergence of the denominator, we decompose it as:

\begin{equation}
\hw_{\lambda}^\top \Sigma \hw_{\lambda} = \hdelta^\top \p{\hSigma_c + \lambda I_{p\times p}}^{-1}  \Sigma \p{\hSigma_c + \lambda I_{p\times p}}^{-1}\hdelta  = \tilde A + 2 \tilde B +  \tilde C
\label{big_decomp}
\end{equation}

where $M := \p{\hSigma_c + \lambda I_{p\times p}}^{-1}  \Sigma \p{\hSigma_c + \lambda I_{p\times p}}^{-1}$ and 
\begin{align*}
 \tilde A := \delta^\top M \delta, \,\, \,\, \,\,
 \tilde B := \delta^\top M \p{\hdelta - \delta},  \,\, \,\, \,\,
 \tilde C  := \p{\hdelta - \delta}^\top M \p{\hdelta - \delta} .
\end{align*}

One can show $ \tilde B \rightarrow_{a.s.} 0$ similarly to the analysis of the error terms in the proof of Lemmas \ref{offset_to_zero} and \ref{simple_lemma}; we omit the details. The two remaining terms will converge to nonzero quantities. First we show that:  

\begin{alemma} We have the convergence $ \tilde A := \delta^\top M \delta \rightarrow_{a.s.} \alpha^2 |\kappa'(\lambda)|$, where 
$$\kappa(\lambda) = \frac{1}{\gamma}\left(\frac{1}{\lambda v(-\lambda)}-1\right),$$

with $v$ the companion Stieltjes transform of the ESD of the covariance matrix, defined in \eqref{dual.ST}. Expressing the derivative explicitly, we have the limit $\tilde A \rightarrow_{a.s.} (v-\lambda v')/[\gamma (\lambda v)^2]$. The limit is strictly positive. 
\label{term_A}
\end{alemma}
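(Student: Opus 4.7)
The plan is to split the problem into (i) a concentration step that replaces the quadratic form by its conditional mean, and (ii) a random-matrix step that identifies the limit of that mean via a derivative trick applied to the Ledoit--P\'ech\'e formula \eqref{eq:ledoit_peche}.

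First, since $\delta$ has i.i.d.\ coordinates with $\mathbb{E}[\delta_i]=0$, $\mathrm{Var}(\delta_i)=\alpha^2/p$, and $\mathbb{E}[\delta_i^{4+\eta}] \le Cp^{-(2+\eta/2)}$, I would apply a standard concentration-of-quadratic-forms lemma (of Bai--Silverstein type) to the matrix $M$ conditionally on $X$. The operator norm of $M$ is bounded by $\|M\|_{\text{op}} \le \lambda^{-2}\|\Sigma\|_{\text{op}} \le B/\lambda^2$ uniformly in $p$, so the lemma gives
\[
\tilde A - \mathbb{E}\bigl[\tilde A \,\big|\, X\bigr] \;=\; \delta^\top M\delta - \tfrac{\alpha^2}{p}\tr(M) \;\longrightarrow_{a.s.}\; 0,
\]
via a Borel--Cantelli argument using the moment bound. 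This reduces the task to identifying the almost-sure limit of $p^{-1}\tr(M)$.

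The key observation is the identity, valid by cyclicity of the trace,
\[
\tfrac{1}{p}\tr(M) \;=\; \tfrac{1}{p}\tr\!\bigl(\Sigma(\hSigma_c+\lambda I)^{-2}\bigr) \;=\; -\,\frac{d}{d\lambda}\!\left[\tfrac{1}{p}\tr\!\bigl(\Sigma(\hSigma_c+\lambda I)^{-1}\bigr)\right].
\]
Denoting $\kappa_p(\lambda) = p^{-1}\tr(\Sigma(\hSigma_c+\lambda I)^{-1})$, the Ledoit--P\'ech\'e theorem \eqref{eq:ledoit_peche} gives $\kappa_p(\lambda)\to_{a.s.}\kappa(\lambda) = \gamma^{-1}(1/[\lambda v(-\lambda)]-1)$ for each $\lambda>0$. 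I would then upgrade pointwise convergence to convergence of derivatives. The cleanest route is analyticity: for each fixed realization, $\lambda\mapsto \kappa_p(-\lambda)$ extends to an analytic function on $\mathbb{C}\setminus\mathbb{R}^+$, and the family $\{\kappa_p\}$ is uniformly bounded on compact subsets bounded away from the positive real axis (since $\|\Sigma\|_{\text{op}}\le B$ and the resolvent is controlled). Vitali's convergence theorem for analytic functions then yields locally uniform convergence, hence convergence of derivatives: $\kappa_p'(\lambda)\to \kappa'(\lambda)$ almost surely. (Alternatively, one can verify equicontinuity of $\{\kappa_p'\}$ directly via the bound $|\kappa_p''(\lambda)| \le 2B/\lambda^3$.)

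Combining the two steps,
\[
\tilde A \;\to_{a.s.}\; -\alpha^2\,\kappa'(\lambda) \;=\; \alpha^2\,|\kappa'(\lambda)|,
\]
where the sign follows because $\kappa_p(\lambda)$ is manifestly decreasing in $\lambda>0$. Finally, differentiating $\kappa(\lambda) = \gamma^{-1}(1/[\lambda v(-\lambda)]-1)$ explicitly in $\lambda$ (treating $v(-\lambda)$ as a function of $\lambda$ with derivative $-v'(-\lambda)$ in the sense of the footnote convention) gives
\[
-\kappa'(\lambda) \;=\; \frac{v(-\lambda)-\lambda\,v'(-\lambda)}{\gamma\,\bigl(\lambda\,v(-\lambda)\bigr)^{2}},
\]
yielding the claimed limit. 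Strict positivity follows because $v(-\lambda)>0$ and $v'(-\lambda)<v(-\lambda)/\lambda$ for $\lambda>0$, which one can read off from the integral representation $v(-\lambda) = \int (t+\lambda)^{-1}\,d\underline F(t)$, $v'(-\lambda)=\int (t+\lambda)^{-2}\,d\underline F(t)$, and Cauchy--Schwarz.

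The main obstacle is step (ii): justifying the interchange of $\lim_{p\to\infty}$ and $\partial_\lambda$. Convergence on a dense set of $\lambda$ is immediate from Ledoit--P\'ech\'e, but promoting this to convergence of derivatives requires a compactness or analyticity argument as above; once that is in place, the remainder of the proof is algebraic.
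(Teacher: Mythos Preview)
Your proposal is correct and follows essentially the same route as the paper: concentration of quadratic forms reduces $\tilde A$ to $\alpha^2 p^{-1}\tr(M)$, and the derivative trick combined with the Ledoit--P\'ech\'e formula and Vitali's theorem identifies the limit as $-\alpha^2\kappa'(\lambda)$. The only notable difference is the strict-positivity step: the paper bounds $\mathbb{E}[\tilde A]$ from below using $\lambda_{\min}(\Sigma)\ge b$ and the almost-sure boundedness of $\lVert\hSigma_c\rVert_{\mathrm{op}}$, whereas you argue via the integral representation---note, however, that $v-\lambda v'=\int t(t+\lambda)^{-2}\,d\underline F(t)>0$ follows directly from the pointwise inequality $\lambda \le t+\lambda$, not from Cauchy--Schwarz.
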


Lemma \ref{term_A} is proved in Section \ref{pf:term_A}, using \cite{ledoit2011eigenvectors}'s result and a derivative trick similar to that  employed in a similar context by \cite{karoui2011geometric,rubio2012performance,zhang2013finite}.  Finally, the last statement that we need is:  
\begin{alemma} We have the limit 
$$ \tilde C := \p{\hdelta - \delta}^\top M \p{\hdelta - \delta}  \rightarrow_{a.s.}  \frac{v'-v^2}{ \lambda^2 v^4},$$

where $v$ the companion Stieltjes transform of the ESD of the covariance matrix, defined in \eqref{dual.ST}. 
\label{term_C}
\end{alemma}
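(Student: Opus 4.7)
The strategy is to reduce the problem to computing the almost-sure limit of a deterministic trace functional by exploiting Gaussianity, then to evaluate that limit via a derivative trick analogous to the one used for Lemma \ref{term_A}. Writing $x_i = \mu_{y_i} + \Sigma^{1/2} z_i$ with $z_i \sim \nn(0, I_{p \times p})$ i.i.d., the equal-class-size assumption gives $\hdelta - \delta = \Sigma^{1/2} u/\sqrt{n}$ for some $u \sim \nn(0, I_{p \times p})$, and Basu's theorem applied to the within-class residuals implies that $u$ is independent of $\hSigma_c$. Therefore $\tilde C = n^{-1} u^\top \Sigma^{1/2} M \Sigma^{1/2} u$ with $u \perp M$, and a Hanson--Wright concentration bound---combined with the operator-norm bound $\|M\|_{op} \le B/\lambda^2$ from the eigenvalue assumption on $\Sigma$---yields
\begin{equation*}
\tilde C \;-\; \frac{1}{n}\tr\bigl(\Sigma A^{-1}\Sigma A^{-1}\bigr) \;\longrightarrow\; 0 \quad \text{a.s.,} \qquad A := \hSigma_c + \lambda I_{p \times p}.
\end{equation*}
It then suffices to identify the almost-sure limit of $p^{-1}\tr(\Sigma A^{-1}\Sigma A^{-1})$ and multiply by $\gamma_p \to \gamma$.

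For the trace limit I introduce the perturbation $\Phi(\lambda, s) := p^{-1}\tr(\Sigma(A - s\Sigma)^{-1})$, so that $\partial_s \Phi(\lambda, s)|_{s=0} = p^{-1}\tr(\Sigma A^{-1}\Sigma A^{-1})$ by direct calculation. The crucial simplification is that, using $\hSigma_c = \Sigma^{1/2} W_c \Sigma^{1/2}$ for a within-class white Wishart $W_c$ and conjugating by $\Sigma^{\pm 1/2}$ inside the trace,
\begin{equation*}
\Phi(\lambda, s) \;=\; \frac{1}{p}\tr\!\left((W_c + \lambda\Sigma^{-1} - s I_{p \times p})^{-1}\right),
\end{equation*}
so $\Phi(\lambda, \cdot)$ is the Stieltjes transform of the deformed Wishart matrix $W_c + \lambda\Sigma^{-1}$. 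A standard Silverstein leave-one-out / Sherman--Morrison argument---equivalently, the deterministic-equivalent machinery of \citet{hachem2007deterministic} or \citet{rubio2012performance} already cited in the text---gives almost-sure convergence $\Phi(\lambda, s) \to \psi(\lambda, s)$, where $\psi$ solves the fixed-point equation
\begin{equation*}
\psi \;=\; \int_0^\infty \frac{t\, dH(t)}{\lambda + (a - s)\,t}, \qquad a \;=\; \frac{1}{1 + \gamma\,\psi}.
\end{equation*}
At $s = 0$ this is consistent with \eqref{eq:ledoit_peche}: one recovers $a(0) = \lambda v$ and $\psi(\lambda, 0) = \kappa(\lambda)$.

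Implicit differentiation in $s$ at $s = 0$ yields $\psi_s(0) = (1 - a'(0))\,J/\lambda^2$ and $a'(0) = -\gamma(\lambda v)^2 \psi_s(0)$, where $J := \int_0^\infty t^2\, dH(t)/(1 + vt)^2$. Differentiating the Silverstein equation \eqref{silv.eq} once in $z$ provides the identity $J = (v' - v^2)/(\gamma v^2 v')$; substituting and solving the resulting linear equation for $\psi_s(0)$ gives, after the cancellations $\gamma v^2 J = (v'-v^2)/v'$ and $1 - \gamma v^2 J = v^2/v'$, the closed form $\psi_s(0) = (v' - v^2)/(\gamma\lambda^2 v^4)$, so $\tilde C \to \gamma \cdot \psi_s(0) = (v' - v^2)/(\lambda^2 v^4)$ as claimed. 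The main obstacle is justifying the interchange of the a.s.\ limit with the $\partial_s$ derivative; I would handle this by noting that $s \mapsto \Phi(\lambda, s)$ is holomorphic and uniformly bounded on a fixed complex disk $\{|s| < r\}$ with $r < \lambda/B$ (using $\lambda_{\max}(\Sigma) \le B$, which guarantees $\|(A - s\Sigma)^{-1}\|_{op} \le (\lambda - rB)^{-1}$), so that pointwise a.s.\ convergence of this family of analytic functions promotes to uniform convergence on compact sub-disks by Vitali--Porter, and hence to a.s.\ convergence of the derivative at $s = 0$; alternatively, one can invoke the differentiable form of the cited deterministic-equivalent theorems directly.
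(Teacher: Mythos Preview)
Your argument is correct and takes a genuinely different route from the paper's proof.  Both approaches begin identically: write $\hdelta-\delta=\Sigma^{1/2}u/\sqrt{n}$ with $u$ standard Gaussian and independent of $\hSigma_c$, apply quadratic-form concentration, and reduce the problem to the almost-sure limit of $p^{-1}\tr\bigl(\Sigma A^{-1}\Sigma A^{-1}\bigr)$.  From there the two arguments diverge.  The paper simply \emph{quotes} the closed-form limit of this trace from \citet{chen2011regularized}, together with the $O(n^{-2})$ variance bound of \citet{hachem2008new} to upgrade convergence in probability to almost-sure, and then rewrites Chen et al.'s $\Theta_2(\lambda,\gamma)$ in terms of $v,v'$.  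Your route instead rederives that limit from scratch: the auxiliary parameter $s$ is chosen so that the desired trace is $\partial_s\Phi(\lambda,s)\big|_{s=0}$; after the conjugation $\Sigma^{1/2}(\cdot)\Sigma^{1/2}$, the function $\Phi(\lambda,\cdot)$ becomes the Stieltjes transform of the additively deformed white Wishart $W_c+\lambda\Sigma^{-1}$; your fixed-point system for $\psi$ is exactly the subordination relation for the free additive convolution $\mathrm{MP}(\gamma)\boxplus\mathrm{LSD}(\lambda\Sigma^{-1})$, and the implicit differentiation together with the $z$-derivative of the Silverstein equation gives the answer transparently.  The Vitali step to interchange the almost-sure limit with $\partial_s$ is the same device the paper used for Lemma~\ref{term_A}.

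What each approach buys: the paper's proof is short because it outsources the hard step to an existing lemma; your proof is longer but explains \emph{why} the formula $(v'-v^2)/(\lambda^2v^4)$ emerges from the Silverstein equation, and in effect reproves Chen et al.'s Lemma~2 in this special case.  One small correction: the deterministic equivalent you need is for $n^{-1}V^\top V+D$ with $D$ deterministic, which is not the variance-profile model of \citet{hachem2007deterministic}; the right reference is Silverstein's (1995) theorem for $B=A+X^*TX$ with $T=I$, or the free-probability subordination result that follows from orthogonal invariance of the Gaussian Wishart.  With that citation fixed, your argument is complete.
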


Lemma \ref{term_C} is proved in Section \ref{pf:term_C}, as an application of the results of  \cite{hachem2008new} and \citet{chen2011regularized}. With all these results, we can now prove Theorem \ref{theo:rda}. 

\begin{proof}[Final proof of Theorem \ref{theo:rda}]
\label{final_proof_rda} 
By the decomposition \eqref{big_decomp} and Lemmas \ref{term_A} and \ref{term_C}, we have the convergence 
$$\hw_{\lambda}^\top \Sigma \hw_{\lambda} \to_{a.s.} \alpha^2 \frac{v-\lambda v'}{\gamma (\lambda v)^2} +  \frac{v'-v^2}{\lambda^2 v^4}.$$

By Lemma \ref{term_C}, the second  term is strictly positive. Therefore, combining with Lemma \ref{simple_lemma} and the continuous mapping theorem, we have
\begin{equation}
\frac{\hw_{\lambda}^{\top}\delta}{\sqrt{\hw_{\lambda}^\top \Sigma \hw_{\lambda}}} \to_{a.s.} \frac{\alpha^2 m(-\lambda)}{  \left[ \alpha^2 \frac{v-\lambda v'}{\gamma (\lambda v)^2} +  \frac{v'-v^2}{ \lambda^2 v^4} \right]^{1/2}}.
\label{main_conv}
\end{equation}

Denote by $\Theta$ the parameter on the right hand side. After algebraic simplification, we obtain that $\Theta$ has exactly the form stated in the theorem for the margin of RDA. To finish the proof, we show that the error rate is indeed determined by $\Theta$. From \eqref{main_conv} and the continuous mapping theorem, recalling the error rate $\Err_{2}\p{w}$ from \eqref{class_err_3}, we have
$\Err_{2}\p{\hw_{\lambda}} \to_{a.s.} \Phi(-\Theta).$

From Lemma \ref{reduce_to_same_noncentrality} and the definition of the error rate $\Err_{1}\p{w}$ from \eqref{class_err_2}, we can move from $\Err_2$ to $\Err_1$: 
$\Err_{2}\p{\hw_{\lambda}} - \Err_{1}\p{\hw_{\lambda}} \to_{a.s.} 0.$

Finally, from Lemma \ref{offset_to_zero} and the definition of the error rate $\Err_{0}\p{w}$ in Equation \eqref{class_err}, we can discard the offset $\smash{\hat b}$, and move from $\Err_1$ to $\Err_0$:
$\Err_{1}\p{\hw_{\lambda}} -  \Err_{0}\p{\hw_{\lambda}, \hat b}\to_{a.s.} 0.$

The last three statements imply that $\smash{\Err_{0}\p{\hw_{\lambda},\hat b}\to_{a.s.} \Phi(-\Theta)}$, which finishes the proof of Theorem \ref{theo:rda}.
\end{proof}

In the proofs of the lemmas we will use the following well-known statement repeatedly: 

\begin{alemma}[Concentration of quadratic forms, consequence of Lemma B.26 in \citet{bai2010spectral}] Let $x \in \RR^p$ be a random vector with i.i.d. entries and $\EE{x} = 0$, for which $\EE{(\sqrt{p}x_i)^2} = \sigma^2$ and $\sup_i \EE{(\sqrt{p}x_i)^{4+\eta}}$ $ < C$ for some $\eta>0$ and $C <\infty$. Moreover, let $A_p$ be a sequence of random $p \times p$ symmetric matrices independent of $x$, with uniformly bounded eigenvalues. Then the quadratic forms $x^\top A_p x $ concentrate around their means:  $\smash{x^\top A_p x - p^{-1} \sigma^2 \tr A_p \rightarrow_{a.s.} 0}$.
\label{quad_form}
\end{alemma}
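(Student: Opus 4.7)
The plan is to combine a conditional application of Bai--Silverstein's Lemma B.26 with a Borel--Cantelli argument. While Lemma B.26 is typically stated for a deterministic matrix, here $A_p$ is random but independent of $x$, so all the moment bounds carry through after conditioning on $A_p$.

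First I would compute the conditional mean: since $\EE{x_i x_j} = (\sigma^2/p)\,\delta_{ij}$ and $A_p \perp x$, one has $\EE{x^\top A_p x \cond A_p} = (\sigma^2/p)\,\tr A_p$. It therefore suffices to show that $Y_p := x^\top A_p x - (\sigma^2/p)\,\tr A_p \to 0$ almost surely. Passing to the rescaled vector $y := \sqrt{p}\,x$, whose coordinates satisfy $\EE{y_i^2} = \sigma^2$ and $\sup_i \EE{|y_i|^{4+\eta}} < C$, gives $Y_p = p^{-1}\bigl(y^\top A_p y - \EE{y^\top A_p y \cond A_p}\bigr)$.

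Next, applying Lemma B.26 of \citet{bai2010spectral} conditionally on $A_p$ with exponent $k := 2 + \eta/2$ yields a bound of the form $\EE{|y^\top A_p y - \EE{y^\top A_p y \cond A_p}|^{k} \cond A_p} \leq C_k \bigl[(\tr A_p^2)^{k/2} + \tr|A_p|^{k}\bigr]$, where the constant depends on the prescribed moments. Because the eigenvalues of $A_p$ are uniformly bounded by some $K$, both trace quantities are $O(p)$, so the conditional $k$th moment is $O(p^{k/2})$; taking expectations and dividing by $p^k$ gives $\EE{|Y_p|^k} = O(p^{-k/2})$. Since $k/2 = 1 + \eta/4 > 1$, Markov's inequality combined with Borel--Cantelli delivers $Y_p \to 0$ almost surely.

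The main point to watch is the moment budget: the $(4+\eta)$-moment assumption gives precisely enough integrability to choose $k > 2$ and make $\sum_p p^{-k/2}$ summable. With only four finite moments one would be stuck at $k = 2$ with the borderline non-summable tail $p^{-1}$, and a more delicate truncation or martingale argument would be needed; the hypothesis as stated sidesteps that difficulty, making the conditional-moment-plus-Borel--Cantelli route the most direct path to almost sure convergence.
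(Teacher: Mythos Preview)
Your proposal is correct and follows essentially the same route as the paper: rescale to $y=\sqrt{p}\,x$, apply Lemma~B.26 of \citet{bai2010spectral} (conditionally on $A_p$) with exponent $q=2+\eta/2$, use the uniform eigenvalue bound to control the trace terms by $O(p)$, obtain a $q$th-moment bound of order $p^{-(1+\eta/4)}$, and conclude via Markov plus Borel--Cantelli. Your explicit remark that the $(4+\eta)$-moment hypothesis is exactly what makes the tail summable is a nice addition.
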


Lemma \ref{quad_form} requires a small proof, which is provided in Section \ref{pf:quad_form}. The rest of this section contains the proofs of the lemmas. 

\subsubsection{Proof of Lemma \ref{offset_to_zero}}
\label{pf:offset_to_zero}

We start by conditioning on the random variables $\bar{\mu}, \delta$, or equivalently on $\mu_{\pm 1}$. Conditional on $\bar{\mu}, \delta$, we have that $\hmu_{+1} \sim \mathcal{N}(\mu_{+1},2\Sigma/n)$,  independently of $\hmu_{-1} \sim \mathcal{N}(\mu_{-1},2\Sigma/n)$. Therefore, $\hdelta \sim \nn\p{\delta, \, \Sigma/n}$, and independently $\hmu \sim \nn\p{\bar{\mu}, \, \Sigma/n}$. Further - still conditionally on $\bar{\mu}, \delta$ - it holds that $\hSigma_c$ and $\hmu_{\pm1}$ are independent by Gaussianity. This shows that conditionally on $\bar{\mu}, \delta$,  the random variables $\hSigma_c, \hmu, \hdelta$ are independent.

 Hence there exist two standard normal random vectors $Z,W \in \mathbb{R}^p$ independent of $\hSigma_c$ conditionally on $\bar{\mu}, \delta$, such that we can represent 
\begin{equation}
 \hdelta = \delta + \frac{1}{\sqrt{n}} \Sigma^{1/2}Z ,\,\,\,  \hmu = \bmu + \frac{1}{\sqrt{n}} \Sigma^{1/2}W.
 \label{decomposition}
\end{equation}

Crucially, this representation has the same form regardless of the value of $\delta$, $\bar{\mu}$, therefore the random variables $Z,W$ are unconditionally independent of $\delta, \, \bmu, \, \hSigma_c$. The unconditional indepdendence of $\delta,\bmu,\hSigma_c,Z,W$ will lead to convenient simplifications. 

We decompose $\hat{b}$ according to \eqref{decomposition} into the four terms that arise from expanding $\hdelta,\hmu$: 
\begin{align*}
\hat{b} = \hdelta^\top \p{\hSigma_c + \lambda I_{p \times p}}^{-1}\hmu = T_1 + T_2 + T_3 + T_4,
\end{align*}
where $L = \p{\hSigma_c + \lambda I_{p \times p}}^{-1}$ and 
\begin{align}
 T_1  &:=  \delta^\top L \bmu \\
 T_2  &:= n^{-1/2} Z^\top \Sigma^{1/2} L \bmu \\
 T_3 &:= n^{-1/2}\delta^\top L \Sigma^{1/2} W\\
 T_4 &:= n^{-1}  Z^\top \Sigma^{1/2} L \Sigma^{1/2} W.
 \label{T_defs}
\end{align}

The proof proceeds by showing that each of the $T_i$ converge to zero. 

{\bf The first term: $T_1$.} Let us denote $l = L\bmu$. Then by the independence and the zero-mean property of the coordinates of $\delta$

$$\EE{ (\delta^\top l)^4|l}=\sum_{i=1}^p l_i^4 \EE{ \delta_i^4} + \sum_{1\le i\neq j \le p} l_i^2l_j^2 \EE{\delta_i^2}\EE{\delta_j^2}.$$

Note that we have $\EE{(\sqrt{p} \delta_i)^4} \le C'$ for some constant $C'$ by the increasing relation between $L_p$ norms and by $\EE{(\sqrt{p} \delta_i)^{4+\eta}} \le C$. Therefore, with $C$ denoting some constant whose meaning may change from line to line

$$\EE{ (\delta^\top l)^4|l}\le \frac{C}{p^2} \sum_{i=1}^p l_i^4 + \frac{\alpha^4}{p^2} \sum_{1\le i\neq j \le p} l_i^2l_j^2 \le \frac{C}{p^2} \|l\|_2^4.$$

By the boundedness of the operator norm of $L$, we see $\|l\|_2 \le \|\bmu\|_2/\lambda$. By assumption, $\|\bmu\|_2\le C p^{1/4-\eta/2}$ almost surely for sufficiently large $p$. Therefore $\EE{ (\delta^\top l)^4}\le C p^{-(1+2\eta)}$ almost surely for sufficiently large $p$; and this bound is summable in $p$. Using the Markov inequality for the fourth moment, $\mathbb{P}(|\delta^\top l| \ge c) \le \EE{ (\delta^\top l)^4}/c^4$, and by the Borel-Cantelli lemma, it follows that $\delta^\top l \to 0$ almost surely. 

{\bf The second term: $T_2$.} This term differs from $T_1$ because $n^{-1/2}Z$ replaces $\delta$. To show the convergence $T_1 \to_{a.s.}0$ we only used the properties of the first four moments of $\delta$. The moments of $n^{-1/2}Z$ scale in the same way with $p$ as the moments of $\delta$. Therefore, the same proof shows $T_2 \to_{a.s.}0$.

{\bf The last two terms: $T_3$ and $T_4$.} The convergence of these terms follows directly from a well-known lemma, which we cite from \cite{couillet2011random}:

\begin{lemma}[Proposition 4.1 in \cite{couillet2011random}]
Let $x_n \in \mathbb{R}^n $ and $y_n \in \mathbb{R}^n $ be independent sequences of random vectors, such that for each $n$ the coordinates of $x_n$ and  $y_n$ are independent random variables. Moreover, suppose that the coordinates of $x_n$ are identically distributed with mean 0, variance $C/n$ for some $C>0$ and fourth moment of order $1/n^2$. Suppose the same conditions hold for $y_n$, where the distribution of the coordinates of $y_n$ can be different from those of $x_n$. Let $A_n$ be a sequence of $n \times n$ random matrices such that $\|A_n\|$ is uniformly bounded. Then $x_n^\top A_n y_n \to_{a.s.} 0$.
\end{lemma}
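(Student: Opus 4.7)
The plan is to estimate a moment of the bilinear form $s_n := x_n^\top A_n y_n$ that decays quickly enough in $n$ to be summable, and then invoke the Borel--Cantelli lemma. Because the hypotheses control only the fourth moments of the coordinates of $x_n$ and $y_n$, the natural object to bound is $\mathbb{E}[s_n^4]$, which I expect to be of order $n^{-2}$.

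First I would condition on the pair $(A_n, y_n)$, set $b := A_n y_n$, and expand
\[
\mathbb{E}[s_n^4 \mid A_n, y_n] \;=\; \mathbb{E}\Bigl[\bigl(\textstyle\sum_i x_{n,i}\, b_i\bigr)^4 \,\Big|\, A_n, y_n\Bigr].
\]
The independence and centering of the $x_{n,i}$ eliminate every multi-index term whose index multiset contains a singleton, so the only surviving contributions come from either a single repeated index (paid for by a fourth moment of order $n^{-2}$) or two distinct indices each appearing twice (paid for by the product of two second moments, each of order $n^{-1}$). Both patterns yield the same order, giving
\[
\mathbb{E}[s_n^4 \mid A_n, y_n] \;\le\; \frac{C_1}{n^2}\,\|b\|_2^4 \;=\; \frac{C_1}{n^2}\,\bigl(y_n^\top A_n^\top A_n\, y_n\bigr)^2.
\]

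Next I would remove the conditioning on $y_n$. Applying the identical quadratic-form moment estimate to $y_n^\top B_n\, y_n$ with $B_n := A_n^\top A_n$, and using the uniform bound $\|A_n\|_{op} \le K$ together with the second- and fourth-moment assumptions on $y_n$, one obtains $\mathbb{E}[(y_n^\top B_n\, y_n)^2 \mid A_n] \le C_2$ uniformly in $n$ and in the realization of $A_n$. Taking total expectations then yields $\mathbb{E}[s_n^4] \le C_3 n^{-2}$. Markov's inequality gives $\mathbb{P}(|s_n| > \varepsilon) \le C_3/(\varepsilon^4 n^2)$ for every fixed $\varepsilon > 0$, which is summable, and the Borel--Cantelli lemma delivers $s_n \to 0$ almost surely.

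The main obstacle is the weak fourth-moment hypothesis: higher moments of $x_n$ and $y_n$ are unavailable, so Gaussian-style tail bounds are inadmissible and the fourth moment of $s_n$ must be computed essentially by hand. This forces a nested, two-stage application of the quadratic-form moment bound (once in $x_n$, once in $y_n$), and it is important that the only structural fact used about $A_n$ is the uniform operator-norm bound, since $A_n$ is allowed to be random.
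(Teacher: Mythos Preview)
The paper does not actually prove this lemma: it is quoted verbatim as Proposition~4.1 of \citet{couillet2011random} and used as a black box in the proof of Lemma~\ref{offset_to_zero}. So there is no ``paper's own proof'' to compare against; your proposal supplies one where the paper simply cites the literature.

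That said, your argument is the standard one and it is essentially correct. Two small remarks. First, in step~4 you invoke ``the identical quadratic-form moment estimate'' for $y_n^\top B_n y_n$, but your step~3 was about a \emph{linear} form $\sum_i x_{n,i} b_i$, so the estimate is not literally identical. The cleanest fix is to bound deterministically $y_n^\top B_n y_n \le \|B_n\|_{op}\,\|y_n\|_2^2 \le K^2 \|y_n\|_2^2$ and then compute $\mathbb{E}\bigl[\|y_n\|_2^4\bigr] = \sum_i \mathbb{E}[y_{n,i}^4] + \sum_{i\neq j}\mathbb{E}[y_{n,i}^2]\,\mathbb{E}[y_{n,j}^2] = O(1)$ directly from the moment hypotheses. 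Second, your conditioning on $(A_n,y_n)$ in step~3 tacitly uses that $x_n$ is independent of $A_n$ as well as of $y_n$; the lemma statement is a bit ambiguous on this point, but it is satisfied in every application in the paper (where $A_n$ depends only on $\hSigma_c$, which is independent of the relevant $x_n,y_n$), and the original source intends it.
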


While this lemma was originally stated for complex vectors, it holds verbatim for real vectors as well. The lemma applied with $x_n = \delta$, $y_n = n^{-1/2}W$ and $A_n = L\Sigma^{1/2}$ shows convergence of $T_3 \to_{a.s.} 0$; and similarly it shows $T_4 \to_{a.s.} 0$. This finishes the proof of Lemma \ref{offset_to_zero}.

\subsubsection{Proof of Lemma \ref{reduce_to_same_noncentrality}}
\label{pf:reduce_to_same_noncentrality}

This follows from the proof of Lemma \ref{reduce_to_same_noncentrality} by noting $\hw_{\lambda}^\top\bar{\mu}  = \hdelta^\top L \bar{\mu} = T_1 + T_2$, where $L = \p{\hSigma_c + \lambda I_{p \times p}}^{-1}$, and with $T_i$ from Equation \eqref{T_defs}.

\subsubsection{Proof of Lemma \ref{simple_lemma}}
\label{pf:simple_lemma}
\begin{proof} We decompose
\begin{align}
\hw_{\lambda}^{\top} \delta
&= \hdelta^\top \p{\hSigma_c + \lambda I_{p\times p}}^{-1}\delta = \delta^\top \p{\hSigma_c + \lambda I_{p\times p}}^{-1} \delta + \p{\hdelta - \delta}^\top \p{\hSigma_c + \lambda I_{p\times p}}^{-1} {\delta}.
\label{numerator}
\end{align}
We will analyze the two terms separately, and show that the second term converges to 0.  

{\bf The first term in \eqref{numerator}: $ \delta^\top \p{\hSigma_c + \lambda I_{p\times p}}^{-1} \delta$.} Let us denote by $m_{\hSigma_c}(z) = p^{-1} \tr \{(\hSigma_c - zI_{p \times p})^{-1}\}$ the Stieltjes transform of the empirical spectral distribution (ESD) of $\hSigma_c$. We compute expectations using the assumption $\mathbb{E} \delta_i \delta_j= \delta_{ij}\alpha^2/p$ (where $\delta_{ij}$ is the Kronecker symbol that equal 1 if $i=j$ and 0 otherwise), and using that $\delta$ is independent of $\hSigma_c$: 
\begin{equation*} \EE{\delta^\top \p{\hSigma_c + \lambda I_{p\times p}}^{-1} \delta} = \frac{\alpha^2}{p} \EE{\tr\p{\hSigma_c + \lambda I_{p\times p}}^{-1}}  =   \alpha^2 \EE{m_{\hSigma_c}(-\lambda)}
\end{equation*}

We now argue that $\EE{m_{\hSigma_c}(-\lambda)} \rightarrow m(-\lambda)$, where $m(-\lambda)$ is the Stieltjes transform of the limit ESD $F$ of sample covariance matrices $\hSigma =$ $n^{-1} \Sigma^{1/2} E^\top E \Sigma^{1/2}$, where $E$ has i.i.d. entries of mean 0 and variance 1, and the spectrum of $\Sigma$ converges to $H$. Indeed, this follows from the Marchenko-Pastur theorem, given in Equation \eqref{eq:mp_lemma}: $\EE{m_{\hSigma}(-\lambda)} \rightarrow m(-\lambda)$; in addition we need to argue that centering the covariance matrix does not change the limit. Since our centered covariance matrix differs from the usual centered sample covariance matrix, for completeness we state this result as a lemma below:

\begin{alemma}
Let $\hSigma_c$ be the centered and rescaled covariance matrix used in RDA. Then 
\begin{enumerate}
\item Under the conditions of Theorem \ref{theo:rda}, the limit ESD of $\hSigma_c$ equals, with probability 1, the limit ESD $F$ of sample covariance matrices $\hSigma = n^{-1} \Sigma^{1/2} V^\top V \Sigma^{1/2}$, where $V$ is $n \times p$ with i.i.d. entries of mean 0 and variance 1. Therefore centering the covariance does not change the limit. 
\item The Stieltjes transform of $\hSigma_c$, $m_{\hSigma_c}(z)$ converges almost surely and in expectation to $m(z)$, the Stieltjes transform of $F$. For each $z \in \mathbb{C}\setminus \mathbb{R}^+$, $m_{\hSigma_c}(z) \to_{a.s} m(z)$; and $\mathbb{E}m_{\hSigma_c}(z) \to m(z)$.
\end{enumerate}
\label{rescale_center}
\end{alemma}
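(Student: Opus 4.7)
The plan is to reduce $\hSigma_c$ to a rank-$2$ perturbation of a classical sample covariance matrix and then invoke the standard machinery already cited in the paper. First I would write the within-class centering as a projection: let $C \in \mathbb{R}^{n \times 2}$ be the class indicator matrix and $P_C = C(C^\top C)^{-1}C^\top$ the orthogonal projection onto its column span. A direct computation shows $(P_C X)_i = \hmu_{y_i}$, and hence $\hSigma_c = (n-2)^{-1} X^\top (I_n - P_C) X$, where $P_C$ has rank $2$ regardless of $n$ and $p$.

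Next I would compare $\hSigma_c$ with two reference matrices: the classical sample covariance $\hSigma_0 = n^{-1} X^\top X$, whose ESD is controlled directly by the Marchenko--Pastur/Silverstein theorem cited in the paper, and the intermediate matrix $\widetilde{\Sigma} = (n-2)^{-1} X^\top X$. Since $\widetilde{\Sigma} - \hSigma_c = (n-2)^{-1} X^\top P_C X$ has rank at most $2$, the standard rank inequality for empirical spectral distributions (e.g., Theorem A.43 in \citet{bai2010spectral}) yields $\|F_{\widetilde{\Sigma}} - F_{\hSigma_c}\|_\infty \leq 2/p \to 0$. Meanwhile $\widetilde{\Sigma} = [n/(n-2)]\,\hSigma_0$, and the scalar $n/(n-2) \to 1$ preserves the limit under weak convergence, so $F_{\widetilde{\Sigma}}$ and $F_{\hSigma_0}$ share a weak limit. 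Combining these two reductions with the almost-sure and in-expectation convergence $F_{\hSigma_0} \to F$ guaranteed by Assumption A and \eqref{eq:mp_lemma} gives part 1.

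Part 2 would then follow from part 1. For fixed $z \in \mathbb{C}\setminus\mathbb{R}^+$, the map $G \mapsto \int (t - z)^{-1}\,dG(t)$ is a bounded continuous functional on probability measures supported on a common compact subset of $[0, \infty)$. Under the assumption $\lambda_{\max}(\Sigma) \leq B$, a Bai--Yin type argument shows that the spectrum of $\hSigma_c$ is eventually contained in a fixed compact set almost surely. Dominated convergence then upgrades the weak convergence of the ESDs to both almost-sure and in-expectation convergence of $m_{\hSigma_c}(z)$ to $m(z)$.

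The main obstacle I anticipate is the careful bookkeeping of convergence modes across the two reductions: the rank inequality delivers Kolmogorov-distance convergence for the pair $(\widetilde{\Sigma}, \hSigma_c)$, while the rescaling $n/(n-2)\to 1$ must be handled through weak convergence of measures. Both reductions must be verified to preserve the almost-sure and in-expectation modes simultaneously, and a uniform tail control (Bai--Yin or an analogous bound on the largest eigenvalue of $\hSigma_c$) is needed to license the dominated convergence step when passing to the Stieltjes transform in part 2.
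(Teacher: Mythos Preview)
Your approach is essentially the same as the paper's---finite-rank perturbation plus rescaling---but there is a real gap in the first reduction. You write $\hSigma_0 = n^{-1} X^\top X$ and assert that the Marchenko--Pastur theorem cited in the paper applies to it directly. In the RDA setting, however, the rows of $X$ are $x_i \sim \mathcal{N}(\mu_{y_i},\Sigma)$ with \emph{nonzero, class-dependent} means, so $X$ is not of the form $Z\Sigma^{1/2}$ with mean-zero $Z$, and Assumption~A does not cover $n^{-1}X^\top X$ as stated. The fix is easy and uses the same tool you already invoke: write $X = M + V\Sigma^{1/2}$ where $M$ has rows $\mu_{y_i}$ (rank at most $2$) and $V$ has i.i.d.\ standard normal entries; then $X^\top X - \Sigma^{1/2}V^\top V\Sigma^{1/2}$ has rank at most $4$, and another application of the rank inequality closes the gap. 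The paper sidesteps this by subtracting the true means first, obtaining $\hSigma_c = \tfrac{n}{n-2}\cdot n^{-1}\Sigma^{1/2}V^\top P V\Sigma^{1/2}$ directly in terms of $V$ and your rank-$2$ projection $P$, and then comparing $V^\top PV$ with $V^\top V$.

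For part~2 you are working harder than needed. For any fixed $z \in \mathbb{C}\setminus\mathbb{R}^+$ the map $t \mapsto (t-z)^{-1}$ is bounded (by $1/\mathrm{dist}(z,[0,\infty))$) and continuous on all of $[0,\infty)$, so weak convergence of the ESDs already yields convergence of the Stieltjes transforms; no Bai--Yin compact containment is required. The paper simply records this as a property of weak convergence, and dominated convergence (with the same uniform bound) handles the expectation.
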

\begin{proof}[Proof of Lemma \ref{rescale_center}]

Let $u_i$ be the centered data points, $u_i = x_i - \mu_{+1}$ for the positive training examples, and $u_i = x_i - \mu_{-1}$ for the negative training examples. The $u_i$ have mean 0 and covariance matrix $\Sigma$. Let further $\hnu_{+1} = \hmu_{+1} - \mu_{+1}$ be the centered mean of the positive training examples; and define $\hnu_{-1} = \hmu_{-1} - \mu_{-1}$ analogously. We observe that 
\begin{align*}
\hSigma_c &= \frac{1}{n-2} \left[ \sum_{i:y_i=1}(x_i-\hmu_{+1})(x_i-\hmu_{+1})^\top + \sum_{i:y_i=-1}(x_i-\hmu_{-1})(x_i-\hmu_{-1})^\top \right] \\
&= \frac{1}{n-2} \left[ \sum_{i:y_i=1}(u_i-\hnu_{+1})(u_i-\hnu_{+1})^\top + \sum_{i:y_i=-1}(u_i-\hnu_{-1})(u_i-\hnu_{-1})^\top \right] \\
&=  \frac{1}{n-2} \left[ \sum_{i:y_i=1}u_iu_i^\top + \sum_{i:y_i=-1}u_iu_i^\top - \frac{n}{2}(\hnu_{+1}\hnu_{+1}^\top +\hnu_{-1}\hnu_{-1}^\top) \right] \\
& = \frac{n}{n-2} \left[ \frac{1}{n}\Sigma^{1/2} V^\top P V\Sigma^{1/2} \right]. \\
\end{align*}
where $V$ is the $n \times p$ matrix with each row equal to $\Sigma^{-1/2}u_i$, which are i.i.d. Gaussian vectors with i.i.d. entries of mean 0 and variance 1. Also, $P$ is the projection matrix $P = I_{p \times p} - 2n^{-1}(e_{+1}e_{+1}^\top+e_{-1}e_{-1}^\top)$, where the vectors $e_{\pm1}$ are the indicator vectors of the training examples with labels $\pm 1$. Recalling that $\hSigma = n^{-1} \Sigma^{1/2} V^\top V \Sigma^{1/2}$ is the uncentered, $1/n$-normalized covariance matrix, the difference between $\hSigma$ and $\hSigma_c$ is

\begin{align*}
\hSigma - \hSigma_c &=  \frac{1}{n} \Sigma^{1/2} V^\top V \Sigma^{1/2}  - \frac{n}{n-2} \left[ \frac{1}{n}\Sigma^{1/2} V^\top P V\Sigma^{1/2} \right]  \\
&=
 \frac{2}{n(n-2)}   \Sigma^{1/2} V^\top V \Sigma^{1/2}  + \frac{1}{n-2}  \Sigma^{1/2} V^\top (I_{p \times p} - P) V \Sigma^{1/2} = \Gamma_1 + \Gamma_2. 
\end{align*}

It is easy to check that the two error terms $\Gamma_1$ and $\Gamma_2$ are small. Specifically, we can verify that the Frobenius norm $\|\Gamma_1\|_{\text{Fr}}^2 \to 0$. Therefore, By Corollary A.41 in  \cite{bai2010spectral} it follows that $\Gamma_1$ can be ignored when computing the limit ESD. Further, $I_{p \times p} - P$ is of rank at most two by the definition of $P$. Therefore, by Theorem A.44 in  \cite{bai2010spectral}, $\Gamma_2$ does not affect the limit ESD. Putting these together, it follows that $\hSigma_c$ has the same ESD as $\hSigma$; the latter exists due to the Marchenko-Pastur theorem given in Equation \eqref{eq:mp_lemma}. Therefore, the ESD of $\hSigma_c$ converges, with probability 1, to $F$. This finishes the first claim in the Lemma. 

Claim 2 then follows immediately by the properties of weak convergence of probability measures, because the Stieltjes transform is a bounded continuous functional of a probability distribution. 

\end{proof}

Therefore, going back to the proof of Lemma \ref{simple_lemma}, we obtain using Lemma \ref{rescale_center}, that $\EE{m_{\hSigma_c}(-\lambda)} \rightarrow m(-\lambda)$. 
Now note that each eigenvalue of $\p{\hSigma_c + \lambda I_{p\times p}}^{-1}$ is uniformly  bounded in $[0,1/\lambda]$, $\delta$ is independent of $\hSigma_c$,  and the $4+\eta$-th moment of $\sqrt{p}\delta_i$ is uniformly bounded, so by the concentration of quadratic forms, Lemma \ref{quad_form}:
$$ \delta^\top \p{\hSigma_c + \lambda I_{p\times p}}^{-1} \delta - \EE{\delta^\top \p{\hSigma_c + \lambda I_{p\times p}}^{-1} \delta} \rightarrow_{a.s.} 0. $$ 

Putting everything together, we have shown that the first term converges almost surely to $\alpha^2 m(-\lambda)$.

{\bf The second term in \eqref{numerator} converges to zero: } Using the decomposition \eqref{decomposition} from the proof of Lemma \ref{offset_to_zero} in Section \ref{pf:offset_to_zero}, and the notation $L = \p{\hSigma_c + \lambda I_{p \times p}}^{-1}$ we can write
$\varepsilon_2 := \delta^\top \p{\hSigma_c + \lambda I_{p\times p}}^{-1} \p{\hdelta- \delta} = \frac{1}{\sqrt{n}} \delta^\top L \Sigma^{1/2}Z.$

This has the same distribution as $T_3= n^{-1/2} \delta^\top L \Sigma^{1/2}W$, because $Z,W$ are identically distributed, independently of $\delta,\hSigma_c$. In Lemma \ref{offset_to_zero}, we showed $T_3 \to_{a.s.} 0$, so  $\varepsilon_2 \rightarrow_{a.s.} 0$. 
\end{proof}

\subsubsection{Proof of Lemma \ref{term_A}}
\label{pf:term_A}
In this proof it will be helpful to use some properties of smooth complex functions. Let $D$ be a domain, i.e. a connected open set of $\mathbb{C}$. A function $f:D \to \mathbb{C}$ is called analytic on $D$ if it is differentiable as a function of the complex variable $z$ on $D$. The following key theorem, sometimes known as Vitali's theorem, ensures that the derivatives of converging analytic functions also converge.

\begin{alemma}[see Lemma 2.14 in \cite{bai2010spectral}] Let $f_1, f_2,\ldots $ be analytic on the domain $D$, satisfying $|f_n(z)| \le M$ for every $n$ and $z$ in $D$. Suppose that there is an analytic function $f$ on $D$ such that $f_n(z) \to f(z)$ for all $z \in D$. Then it also holds that $f_n'(z) \to f'(z)$ for all $z \in D$.
\label{vitali}
\end{alemma}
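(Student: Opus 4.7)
The plan is a standard complex-analytic argument that reduces derivative convergence to locally uniform convergence of the functions themselves, which we upgrade from pointwise convergence via a normal-families argument and then exploit through Cauchy's integral formula.

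First, I would show that $\{f_n\}$ converges to $f$ uniformly on compact subsets of $D$. The hypothesis $|f_n(z)| \le M$ on $D$ makes $\{f_n\}$ a uniformly bounded family of analytic functions. By Montel's theorem, such a family is normal, so every subsequence of $\{f_n\}$ admits a further subsequence that converges locally uniformly on $D$ to some analytic limit $g$. Locally uniform convergence forces pointwise convergence, so this subsequential limit $g$ must coincide with $f$ at every point, by the assumed pointwise convergence $f_n \to f$. Since every subsequence has a further subsequence with the same limit $f$, the full sequence $f_n$ converges to $f$ locally uniformly on $D$.

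Next, I would use Cauchy's integral formula for derivatives. Fix $z_0 \in D$ and pick $r>0$ with the closed disk $\overline{B(z_0,r)} \subset D$; let $C_r$ denote the circle $|\zeta - z_0| = r$. For each $n$, since $f_n - f$ is analytic on a neighborhood of $\overline{B(z_0,r)}$,
\begin{equation*}
f_n'(z_0) - f'(z_0) = \frac{1}{2\pi i}\oint_{C_r} \frac{f_n(\zeta) - f(\zeta)}{(\zeta - z_0)^2}\, d\zeta.
\end{equation*}
The standard $ML$-estimate for contour integrals gives
\begin{equation*}
|f_n'(z_0) - f'(z_0)| \le \frac{1}{r}\, \sup_{\zeta \in C_r} |f_n(\zeta) - f(\zeta)|.
\end{equation*}

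Finally, I would apply the locally uniform convergence established in the first step to the compact set $C_r \subset D$, which yields $\sup_{\zeta \in C_r} |f_n(\zeta) - f(\zeta)| \to 0$. Combining with the above bound gives $f_n'(z_0) \to f'(z_0)$. Since $z_0 \in D$ was arbitrary, this proves the claim. The only mild subtlety is the upgrade from pointwise to locally uniform convergence, which is where the uniform bound $|f_n| \le M$ is essential via Montel's theorem; without it, pointwise convergence of analytic functions does not in general transfer to derivatives.
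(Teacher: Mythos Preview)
Your proof is correct and follows the standard complex-analytic route: Montel's theorem upgrades pointwise convergence of a uniformly bounded family to locally uniform convergence, and then Cauchy's integral formula transfers this to the derivatives. Note, however, that the paper does not actually supply its own proof of this lemma; it is quoted as a black box from \cite{bai2010spectral} (Lemma~2.14), so there is no paper proof to compare against. Your argument is exactly the kind of proof one would expect behind that citation.
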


We can now prove Lemma \ref{term_A}. 
\begin{proof}[Proof of Lemma \ref{term_A}] We need to prove the convergence of $ \tilde A = \delta^\top M \delta$, where $M = \p{\hSigma_c + \lambda I_{p\times p}}^{-1}  \Sigma \p{\hSigma_c + \lambda I_{p\times p}}^{-1}$. Using the assumption $\mathbb{E} \delta_i \delta_j= \delta_{ij}\alpha^2/p$, and that $\delta,\hSigma_c$ are indpendent, we get $\EE{ \tilde A} = \frac{\alpha^2}{p} \EE{\tr\p{\Sigma \p{\hSigma_c + \lambda I_{p\times p}}^{-2}}}$.

Now, the eigenvalues of $\Sigma$ are bounded above by $B$, and those of $\p{\hSigma_c + \lambda I_{p\times p}}^{-1}$ are bounded above by $1/\lambda$; this shows the eigenvalues of $L$ are uniformly bounded above. Further, $\delta,L$ are independent, and $\delta$ has $4+\eta$-th moments, so we can use the concentration of quadratic forms,  Lemma \ref{quad_form}, to conclude  $ \tilde A -\EE{\tilde A}  \rightarrow_{a.s.} 0$. It remains to get the limit of $\EE{\tilde A}$. 

To do this we use a derivative trick; this technique is similar to those employed in a similar context by \cite{karoui2011geometric,rubio2012performance,zhang2013finite}. We will construct a function with two properties: (1) its derivative is the quantity $\EE{\tilde A}$ that we want, and (2) its limit is convenient to obtain. Based on these two properties, Vitali's theorem will allow us to obtain the limit of $\EE{\tilde A}$. 

Accordingly, consider two general $p \times p$ positive definite matrices $D,E$ and introduce the function
$f_p(\lambda; D,E) = \frac 1p \tr \p{ D \p{E + \lambda I_{p\times p}}^{-1}}.$ Note that the derivative of $f$ with respect to $\lambda$ is $f_p'(\lambda; D,E) = - \frac 1p \tr \p{ D \p{E + \lambda I_{p\times p}}^{-2}}.$

This suggests that the function we should use in the derivative trick is  $f_p(\lambda; \Sigma ,\hSigma_c )$; indeed the limit we want to compute is $\EE{\tilde A}=-\EE{f_p'(\lambda; \Sigma ,\hSigma_c )}$. Conveniently, the limit of $f_p$ is known by the Ledoit-Peche result \eqref{eq:ledoit_peche}:

\begin{equation*} f_p(\lambda; \Sigma ,\hSigma_c ) \rightarrow_{a.s.} \kappa(\lambda) =  \frac{1}{\gamma}\left(\frac{1}{\lambda v(-\lambda)}-1\right), 
\end{equation*}

for all $\lambda \in \mathcal {S} : = \{u+ iv: v\neq0 \mbox{, or } v=0, u >0\}$.

Next we check the conditions for applying Vitali's theorem, Lemma \ref{vitali}. By inspection, the function $f_p(\lambda; \Sigma ,\hSigma_c )$ is an analytic function of $\lambda$ on $\mathcal {S}$ with derivative
$$\frac{\partial \kappa}{\partial \lambda} =\kappa'(\lambda)= - \frac{v-\lambda v'}{\gamma (\lambda v)^2}.$$

Furthermore, $f_p(\lambda; \Sigma ,\hSigma_c )$ is bounded in absolute value: 
$
|f_p(\lambda; \Sigma ,\hSigma_c )| \le \frac{\|\Sigma\|_2}{\lambda}  \le  \frac{B}{\lambda}. 
$

This shows that $f_p(\lambda; \Sigma ,\hSigma )$ is a bounded sequence. Therefore, we can apply Lemma \ref{vitali} to the sequence of analytic functions $f_p$ on the domain $\mathcal {S}$, on the set of full measure on which $f_p$ converges, to obtain that the derivatives also converge on $\mathcal {S}$: $ f'_p(\lambda; \Sigma ,\hSigma ) \rightarrow_{a.s.} \kappa'(\lambda).
$
Finally, since the functions $f_p$ are bounded, the dominated convergence theorem implies that $\mathbb{E}f'_p(\lambda; \Sigma ,\hSigma ) \to \kappa'(\lambda)$.  Since $\EE{\tilde A}=-\EE{f_p'(\lambda; \Sigma ,\hSigma_c )}$, this shows that $\EE{\tilde A} \rightarrow  - \alpha^2 \kappa'(\lambda)$, as required. 

Finally, to see that the limit  of $\EE{\tilde A} > 0$, we use that by assumption the eigenvalues of $\Sigma$ are lower bounded by some $b>0$, and those of $\hSigma_c = n^{-1}\Sigma^{1/2} V^\top P V\Sigma^{1/2}$ are upper bounded by $B\, n^{-1}\|V^\top V\|_2$, hence

$$
\EE{\tilde A} = \frac{\alpha^2}{p} \EE{\tr\p{\Sigma \p{\hSigma_c + \lambda I_{p\times p}}^{-2}}} \ge \frac{\alpha^2 b}{B \, n^{-1} \|V^\top V\|_2}.
$$

Since $V$ has i.i.d. standard normal entries, it is well known that the operator norm of $n^{-1} \|V^\top V\|_2$ is bounded above almost surely (see Theorem 5.8 in \citep{bai2010spectral}, which requires only a fourth moment on the entries of $V$). Therefore, $\liminf_p \EE{\tilde A} \ge c>0$ for some fixed constant $c>0$. 

\end{proof}

\subsubsection{Proof of Lemma \ref{term_C}}
\label{pf:term_C}

We want to find the limit of $\tilde C =$ $ \p{\hdelta - \delta}^\top M \p{\hdelta - \delta}$, where $M = \p{\hSigma_c + \lambda I_{p\times p}}^{-1}  \Sigma \p{\hSigma_c + \lambda I_{p\times p}}^{-1}$. By the decomposition \eqref{decomposition} we have $\hdelta - \delta = n^{-1/2} \Sigma^{1/2}Z$, so  $\tilde  C = \frac{1}{n} Z^\top \Sigma^{1/2} M \Sigma^{1/2} Z$. Since $Z \sim \nn\p{0,I_p}$ independently of $\hSigma_c$ (and thus of $M$), we can write $ \EE{\tilde C} =  \frac{1}{n} \tr \p{ \Sigma^{1/2} M \Sigma^{1/2} }= \frac{\gamma}{p} \tr\p{Q_{\lambda}^2}$, where $Q_{\lambda} :=  \Sigma \p{\hSigma_c + \lambda I }^{-1}$. By the concentration lemma \ref{quad_form}, $\tilde C - \EE{\tilde C} \rightarrow_{a.s.} 0$, so all that remains is to find the limit of $x_0 = p^{-1} \tr\p{Q_{\lambda}^2}$. 

Lemma 1 in \cite{hachem2008new} states that, under the assumptions of Theorem \ref{theo:rda}, one has $\Var{x_0} = O(1/n^2)$, and also $\EE{x_0} = L + O(1/n^2)$ for a certain deterministic quantity $L$. These results imply that $x_0 \to_{a.s.} L$. However, the form of the limit $L$ given in that paper is not convenient for our purposes. 

A convenient form for the limit is obtained in \cite{chen2011regularized}. Their convergence result holds in probability, which is weaker than what we need; this explains why we also need the results of \cite{hachem2008new}. \cite{chen2011regularized} consider sequences of problems of the form $v_i \sim_{iid} \mathcal{N}(\mu_p,\Sigma_p)$, where $\Sigma_p$ is a sequence of covariance matrices that obeys the same conditions we assumed in the statement of Theorem \ref{theo:rda}, and the $\mu_p$ are arbitrary fixed vectors. They form the sample covariance matrix $S_n = {(n-1)}^{-1} \sum_{i=1}^{n} (v_i - \bar v)(v_i - \bar v)^\top$, where $\bar v = n^{-1} \sum_{i=1}^{n} v_i$. In addition to the above conditions, they also assume the additional condition $\sqrt{n}|p/n-\gamma|\to 0$. Then their result states:

\begin{alemma}[Lemma 2 of \cite{chen2011regularized}, pp 1357] 
Under the above conditions,  we have the convergence in probability
$$\frac{1}{p} \tr\p{\left[\Sigma_p \p{S_n+ \lambda I }^{-1}\right]^2} \to_{p} \Theta_2(\lambda,\gamma),$$

where $\Theta_2(\lambda,\gamma)$ is defined in the statement of Theorem 1 of \cite{chen2011regularized}, on pp 1348
$$\Theta_2(\lambda,\gamma) = \frac{1-\lambda m(-\lambda)}{(1-\gamma+\gamma\lambda m(-\lambda))^3}-\lambda \frac{m(-\lambda)-\lambda m'(-\lambda)}{(1-\gamma+\gamma\lambda m(-\lambda))^4}.$$
\label{chen_lemma}
\end{alemma}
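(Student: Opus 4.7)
Set $Q(\lambda) := (S_n + \lambda I)^{-1}$; the goal is $x_0(\lambda) := p^{-1}\tr\bigl[(\Sigma Q)^2\bigr] \to_p \Theta_2(\lambda,\gamma)$. My plan is to express this two-resolvent quantity as the $s$-derivative of a one-resolvent Ledoit--P\'ech\'e type functional, and then conclude via Vitali's theorem exactly as in the proof of Lemma \ref{term_A}. Concretely, introduce
$$F_p(\lambda,s) := \frac{1}{p}\tr\bigl[\Sigma\,(S_n + \lambda I - s\Sigma)^{-1}\bigr],$$
which, for $s$ in the complex disk $\lvert s\rvert < \lambda/(2B)$, is holomorphic (since $\lVert s\Sigma\rVert_{op} < \lambda/2$ keeps the resolvent bounded) and uniformly bounded by $B/(\lambda - B\lvert s\rvert)$. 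Writing $M(s) := S_n + \lambda I - s\Sigma$, the identity $\partial_s M^{-1} = M^{-1}\Sigma M^{-1}$ gives
$$\frac{\partial F_p}{\partial s}(\lambda,0) = \frac{1}{p}\tr\bigl[\Sigma Q\,\Sigma Q\bigr] = x_0(\lambda),$$
so it suffices to identify the a.s.\ limit of $F_p(\lambda,s)$ on a neighborhood of $s=0$ and then differentiate the limit in $s$.

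The key step is to upgrade the Ledoit--P\'ech\'e limit \eqref{eq:ledoit_peche} (which is the case $s=0$) to the perturbed family. Using the Wishart representation $S_n = n^{-1}\Sigma^{1/2} V^\top P V \Sigma^{1/2}$ from the proof of Lemma \ref{rescale_center}, we rewrite
$$S_n + \lambda I - s\Sigma = \lambda I + \Sigma^{1/2}\bigl(n^{-1} V^\top P V - s I_p\bigr)\Sigma^{1/2};$$
since $I_n - P$ has rank at most two, Theorem A.44 of \citet{bai2010spectral} lets us replace $P$ by $I_n$ without changing the limit, exactly as in Lemma \ref{rescale_center}. The shift $-s\Sigma$ is linear in the population covariance, so it can be absorbed by a reparametrized Silverstein fixed-point equation: the resolvent has a deterministic equivalent of the form $(\lambda I + (\alpha(\lambda,s) - s)\Sigma)^{-1}$, where $\alpha(\lambda,s)$ is the unique analytic solution of a generalized Silverstein equation that reduces, at $s=0$, to the standard relation \eqref{ST_inverse}. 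This yields $F_p(\lambda,s) \to \Phi(\lambda,s)$ a.s., with $\Phi(\lambda,0) = \kappa(\lambda) = \gamma^{-1}\bigl((\lambda v(-\lambda))^{-1} - 1\bigr)$.

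With pointwise a.s.\ convergence and uniform boundedness of the holomorphic family $\{F_p(\lambda,\cdot)\}_p$, Vitali's theorem (Lemma \ref{vitali}) gives a.s.\ convergence of $s$-derivatives, so $x_0(\lambda) \to \partial_s\Phi(\lambda,0)$; convergence in probability (as claimed) then follows a fortiori. It remains to compute $\partial_s\Phi(\lambda,0)$ explicitly. Implicit differentiation of the Silverstein equation for $\alpha(\lambda,s)$ at $s=0$ expresses $\partial_s\alpha(\lambda,0)$ in terms of $v(-\lambda)$ and $v'(-\lambda)$; substituting into $\Phi$ and using the identity $1 - \gamma + \gamma\lambda m(-\lambda) = \lambda v(-\lambda)$ together with $\gamma(m'(z)-z^{-2}) = v'(z)-z^{-2}$ produces, after routine algebra, exactly the two terms
$$\frac{1-\lambda m(-\lambda)}{(1-\gamma+\gamma\lambda m(-\lambda))^{3}} - \lambda\,\frac{m(-\lambda)-\lambda m'(-\lambda)}{(1-\gamma+\gamma\lambda m(-\lambda))^{4}} = \Theta_2(\lambda,\gamma).$$

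The principal obstacle is the generalized Ledoit--P\'ech\'e step: standard references state \eqref{eq:ledoit_peche} only for the unperturbed resolvent $(S_n+\lambda I)^{-1}$, and one must check that the Bai--Silverstein machinery goes through for the linear-in-$\Sigma$ shift. This is, however, a routine modification: the moment hypotheses on $V$ (inherited from Assumption \ref{assume:A}, as reinforced by the $\sqrt{n}\lvert p/n - \gamma\rvert \to 0$ condition) and the spectral bounds $0 < b \le \lambda_{\min}(\Sigma) \le \lambda_{\max}(\Sigma) \le B$ are precisely what is needed to run the Stieltjes-transform martingale argument, replacing $H$ by the reparametrized measure $H_s$ induced by the effective shift. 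The algebraic simplification in Step 3 is mechanical but tedious; matching the cubic and quartic denominators in $\Theta_2$ to the two contributions (from $\Phi$ itself and from its dependence on $\alpha(\lambda,s)$) is the combinatorial heart of the computation.
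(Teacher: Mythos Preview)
The paper does not prove this statement: Lemma~\ref{chen_lemma} is quoted verbatim from \citet{chen2011regularized} and used as a black box in Section~\ref{pf:term_C}, where it is combined with a variance bound from \citet{hachem2008new} to upgrade the convergence to almost-sure. So there is no ``paper's proof'' to compare against; your proposal supplies an independent derivation where the paper simply cites.

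Your overall strategy is sound and is a genuine extension of the paper's own toolkit. The $\lambda$-derivative trick of Lemma~\ref{term_A} yields $p^{-1}\tr[\Sigma Q^2]$, not $p^{-1}\tr[(\Sigma Q)^2]$; to reach the latter one needs a different deformation parameter, and your choice $F_p(\lambda,s)=p^{-1}\tr\bigl[\Sigma(S_n+\lambda I - s\Sigma)^{-1}\bigr]$ with $\partial_s F_p(\lambda,0)=x_0(\lambda)$ is exactly the right one. The Vitali step and the final algebraic identification with $\Theta_2$ are unproblematic once the pointwise limit $\Phi(\lambda,s)$ is in hand.

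The real gap is the step you yourself flag. Writing $S_n+\lambda I - s\Sigma=\Sigma^{1/2}(n^{-1}V^\top V - sI)\Sigma^{1/2}+\lambda I$ does \emph{not} put you back in Marchenko--Pastur/Silverstein form: $n^{-1}V^\top V - sI$ is not a Gram matrix, and the rank-one-update and martingale-difference lemmas that drive the Bai--Silverstein proof use that Gram structure essentially. Calling this a ``routine modification'' and asserting a deterministic equivalent of the shape $(\lambda I+(\alpha(\lambda,s)-s)\Sigma)^{-1}$ without derivation is where the argument is incomplete. A cleaner reduction, available because $\Sigma$ is invertible under the hypotheses of Theorem~\ref{theo:rda}, is to conjugate:
\[
F_p(\lambda,s)=\frac{1}{p}\tr\bigl[(W+\lambda\Sigma^{-1}-sI)^{-1}\bigr],\qquad W=n^{-1}V^\top V,
\]
which is the Stieltjes transform at the point $s$ of the additive deformation $W+\lambda\Sigma^{-1}$ of a white Wishart by a deterministic matrix. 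That model \emph{is} covered directly by existing deterministic-equivalent results (the Hachem--Loubaton--Najim framework, or asymptotic freeness of $W$ from deterministic matrices), giving a fixed-point characterization of $\Phi(\lambda,s)$ from which you can differentiate in $s$ and recover $\Theta_2$. Making this reduction explicit would close the only substantive hole in your plan.
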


Lemma \ref{chen_lemma} is stated for the usual centered covariance matrix $S_n$. In Lemma \ref{term_C}, the covariance matrix of interest is $\hSigma_c =  (n-2)^{-1}\Sigma^{1/2} V^\top P V\Sigma^{1/2}$, where $P$ is the projection matrix $P = I_{p \times p} - 2n^{-1}(e_{+1}e_{+1}^\top+e_{-1}e_{-1}^\top)$. Similarly to what we already argued several times in this paper (e.g. in Lemma \ref{rescale_center}), the two covariance matrices have identical limit ESD. Therefore Lemma \ref{chen_lemma} applies to our setting. Combining this with Lemma 1 in \cite{hachem2008new}, we have the convergence:
$$\frac{1}{p} \tr\p{\left[\Sigma \p{\hSigma_c+ \lambda I }^{-1}\right]^2} \to_{a.s.} \Theta_2(\lambda,\gamma).$$

Next, we notice by the definition of $v$ in \eqref{dual.ST} that $1-\gamma+\gamma\lambda m(-\lambda) = \lambda v(\lambda)$, as well as $1-\lambda m(-\lambda) = \gamma^{-1}(1-\lambda v(-\lambda))$; and by taking derivatives $m(-\lambda) - \lambda m'(-\lambda) = \gamma^{-1}(v(-\lambda) - \lambda v'(-\lambda))$. We rewrite the limit $\Theta_2$ in terms of $v$:
\begin{align*}
\Theta_2(\lambda,\gamma) & = \frac{1-\lambda v}{\gamma(\lambda v)^3}-\lambda \frac{v-\lambda v'}{\gamma(\lambda v)^4} =\frac{v'-v^2}{ \gamma \lambda^2 v^4}.
\end{align*}

Therefore, the limit of $\tilde C$ equals $\gamma \Theta_2$, as stated in Lemma \ref{term_C}. 

\subsubsection{Proof of Lemma \ref{quad_form}}
\label{pf:quad_form}
We will use the following Trace Lemma quoted from \cite{bai2010spectral}. 

\begin{alemma}[Trace Lemma, Lemma B.26 of \cite{bai2010spectral}] Let $y$ be a p-dimensional random vector of i.i.d. elements with mean 0. Suppose that $\EE{y_i^2} = 1$, and let $A_p$ be a fixed $p \times p$ matrix. Then  
$$\EE{|y^\top A_p y-\tr A_p|^q} \le C_q \left\{\left(\EE{y_1^4}\tr[A_pA_p^\top]\right)^{q/2}+\EE{y_1^{2q}}\tr[(A_pA_p^\top)^{q/2}]\right\},$$ 

for some constant $C_q$ that only depends on $q$. 
\label{trace_lemma}
\end{alemma}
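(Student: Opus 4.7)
This is a classical Hanson--Wright-style moment inequality for quadratic forms in independent random variables, and my plan is the standard martingale-difference proof due originally to Burkholder. Let $\mathcal{F}_k = \sigma(y_1,\ldots,y_k)$ and decompose
\begin{equation*}
y^\top A_p y - \tr A_p = \sum_{k=1}^p D_k, \qquad D_k := \EE{y^\top A_p y \mid \mathcal{F}_k} - \EE{y^\top A_p y \mid \mathcal{F}_{k-1}}.
\end{equation*}
A direct computation using $\EE{y_i y_j} = \delta_{ij}$ gives the explicit form $D_k = A_{kk}(y_k^2 - 1) + y_k \sum_{j<k}(A_{kj}+A_{jk}) y_j$, so $(D_k)$ is a martingale difference sequence with respect to $(\mathcal{F}_k)$ that telescopes to $y^\top A_p y - \tr A_p$.

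The moment form of Burkholder's (or Burkholder--Rosenthal) inequality then yields, for $q \ge 2$,
\begin{equation*}
\EE{|y^\top A_p y - \tr A_p|^q} \le C_q\, \EE{\Bigl(\textstyle\sum_{k=1}^p \EE{D_k^2 \mid \mathcal{F}_{k-1}}\Bigr)^{q/2}} + C_q \sum_{k=1}^p \EE{|D_k|^q}.
\end{equation*}
Using independence of $y_k$ from $\mathcal{F}_{k-1}$ one evaluates the conditional second moment in closed form; after absorbing the sign-indefinite cross term via $2ab \le a^2+b^2$, one is left with contributions of order $\EE{y_1^4}\,A_{kk}^2$ and $\bigl(\sum_{j<k}(A_{kj}+A_{jk}) y_j\bigr)^2$. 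Summing over $k$ and taking expectation, the first gives $\EE{y_1^4}\sum_k A_{kk}^2 \le \EE{y_1^4}\tr(A_pA_p^\top)$ and the second is bounded by a multiple of $\tr(A_pA_p^\top)$; raising to the $q/2$ power via Jensen produces exactly the first term of the stated bound, namely $C_q(\EE{y_1^4}\tr(A_pA_p^\top))^{q/2}$.

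For the tail term $\sum_k \EE{|D_k|^q}$, the quadratic-in-$y_k$ piece is controlled directly by $C_q\EE{y_1^{2q}}|A_{kk}|^q$, while for the bilinear piece one conditions on $\mathcal{F}_{k-1}$ to peel off $y_k$ and applies a second Burkholder/Rosenthal bound to the centered sum over $j<k$, yielding a contribution of order $\EE{y_1^{2q}}\bigl(\sum_{j<k}(A_{kj}+A_{jk})^2\bigr)^{q/2}$. I expect the main obstacle to be the final matrix-analytic step: summing these over $k$ produces $\sum_k\bigl(\sum_j(A_{kj}+A_{jk})^2\bigr)^{q/2}$, which I would dominate by a constant times $\tr[(A_pA_p^\top)^{q/2}]$ by combining the Schur--Horn majorization---applied to the diagonal versus the eigenvalues of the positive semidefinite matrix $(A_p+A_p^\top)^2$---with the Schatten-norm triangle inequality $\|A_p+A_p^\top\|_{S_q} \le 2\|A_p\|_{S_q}$ and the identity $\|A_p\|_{S_q}^q = \tr[(A_pA_p^\top)^{q/2}]$. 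Once this combinatorial estimate is in hand, the constants collapse into a single $C_q$ and the two contributions assemble into the bound stated in the lemma.
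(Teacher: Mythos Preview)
The paper does not prove this statement: it is quoted verbatim from Lemma~B.26 of Bai and Silverstein's monograph and immediately applied as a black box to derive the concentration result (their Lemma on quadratic forms). There is therefore no ``paper's own proof'' to compare your attempt against.

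For what it is worth, your martingale-difference outline is precisely the route Bai and Silverstein take in their book, so the strategy is sound. One step is glossed over, though: when you handle the predictable quadratic variation term $\EE{\bigl(\sum_k \EE{D_k^2\mid\mathcal F_{k-1}}\bigr)^{q/2}}$, you write ``summing over $k$ and taking expectation \ldots\ raising to the $q/2$ power via Jensen.'' For $q>2$ the map $x\mapsto x^{q/2}$ is convex, so Jensen yields a \emph{lower} bound on the expectation of the power, not an upper one. The random part of $\sum_k \EE{D_k^2\mid\mathcal F_{k-1}}$ is itself a quadratic form $y^\top B^\top B y$ in the $y_j$'s (with $B$ built from the strictly lower-triangular part of $A_p+A_p^\top$), and controlling its $(q/2)$-th moment requires either iterating the same Burkholder argument at the smaller exponent or invoking a Rosenthal-type bound directly. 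This is a routine patch rather than a structural flaw; once made, the rest of your argument---including the Schur--Horn/Schatten step at the end---goes through.
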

Under the conditions of Lemma \ref{quad_form}, the operator norms $\|A_p\|_2$ are bounded by a constant $C$, thus $\tr[(A_pA_p^\top)^{q/2}] \le p C^q$ and $\tr[A_pA_p^\top] \le p C^2$. Consider now a random vector $x$ with the properties assumed in the present lemma. For $y = \sqrt{p}x/\sigma$ and $q = 2+\eta/2$, using that $\EE{y_i^{2q}}\le C$ and the other the conditions in Lemma \ref{quad_form}, Lemma \ref{trace_lemma} thus yields
$$\frac{p^q}{\sigma^{2q}}\EE{|x^\top A_p x-\frac{\sigma^2}{p}\tr A_p|^q} \le C \left\{\left( p C^2\right)^{q/2}+\p{pC}^q\right\},$$ 

or equivalently $\EE{|x^\top A_p x-\frac{\sigma^2}{p}\tr A_p|^{2+\eta}} \le C p^{-(1+\eta/4)}$. Since this bound is summable in $p$, Markov's inequality applied to the $2+\eta$-th moment of $\varepsilon_p = x^\top A_p x-\frac{\sigma^2}{p}\tr A_p$ and the Borel-Cantelli lemma yield the almost sure convergence $\varepsilon_p \to_{a.s.} 0$. 

\subsection{Proof of Theorem \ref{theo:rda_unequal}: Unequal sampling}
\label{sec:unequal_sampling}

This proof is very similar to that of Theorem \ref{theo:rda}, so we will omit some details. Suppose that the classes have unequal probabilities $\mathbb{P}(y_i=+1)=\pi_+$, $\mathbb{P}(y_i=-1)=\pi_-$, and the conditional model $x|y \sim \nn\p{\mu_y, \, \Sigma}$ holds. The Bayes optimal classifier is $\sign(f(x))$, where $f(x) = (x- (\mu_{+1}+\mu_{-1})/2)^\top \Sigma^{-1} (\mu_{+1}-\mu_{-1}) + \log(\pi_+/\pi_-)$. Using the same notation as before, $f(x)$ equals $f(x) = x^\top \Sigma^{-1} \delta - \bmu^\top \Sigma^{-1} \delta + \log(\pi_+/\pi_-)$. The error rate of a generic linear classifier $\sign(x^\top w + b)$ conditional on $w,b$ equals \eqref{class_err}.

We observe $n_{+1}$ samples with label $y_i=1$, and $n_{-1}$ samples with label $y_i=-1$. We consider a general regularized classifier $\sign(\hat f_\lambda(x))$, where $\hat f_\lambda(x) = x^\top \hat w_\lambda + \hat b$, and 
$\hat w_\lambda  = (\hSigma_c + \lambda I_{p \times p})^{-1} \hdelta,\,\,\,\, \hat b = -\hmu^\top \hw_\lambda + c.$

As usual, we have $\hat \delta = (\hmu_{+1}-\hmu_{-1})/2$, $\hat \mu = (\hmu_{+1}+\hmu_{-1})/2$ and  $\hmu_{\pm 1}  =  \sum_{\cb{i : y_i = \pm 1}}  x_i / n_{\pm 1}$. The centered sample covariance matrix $\hSigma_c$ retains its original definition.

We evaluate the limits of the linear and quadratic forms in the error rate, arising when we replace $w$ and $b$ by $\hat w$ and $\hat b$, respectively. We assume that the same regularity conditions as in Theorem $\ref{theo:rda}$ hold, with the additional requirement that the ratios $p/n_{\pm 1}$ each converge  to positive constants: $p/n_{\pm 1} \to \gamma_{\pm 1}>0$. For two independent $p$-dimensional standard normal random variables $Z_{\pm1}$, which are also independent of $\delta,\bmu$, we have the stochastic representation

$$\hmu = \bmu + \frac12 \Sigma^{1/2}\left(\frac{Z_1}{n_1^{1/2}}+\frac{Z_2}{n_2^{1/2}}\right),\,\,\,\,\hdelta = \delta + \frac12 \Sigma^{1/2}\left(\frac{Z_1}{n_1^{1/2}}-\frac{Z_2}{n_2^{1/2}}\right).$$

{\bf The limit of $\hat b = -\hmu^\top \hw_\lambda + c$:} To evaluate this limit, we expand the inner product $-\hmu^\top \hw_\lambda$ using the stochastic representation of $\hdelta$. As in the proof of Theorem \ref{theo:rda}, most terms in the expansion tend to 0 due to independence. Denote by $A_n \approx B_n$ that two random variables are asymptotically almost surely equivalent, i.e. $|A_n - B_n| \rightarrow_{a.s.} 0$. We have by arguments similar to those in Theorem \ref{theo:rda} that
\begin{align*}
-\hmu^\top \hw_\lambda &\approx \frac{1}{4n_{-1}} Z_{-1}^\top \Sigma^{1/2} (\hSigma_c + \lambda I_{p \times p})^{-1} \Sigma^{1/2} Z_{-1} - \frac{1}{4n_{1}} Z_1^\top \Sigma^{1/2} (\hSigma_c + \lambda I_{p \times p})^{-1} \Sigma^{1/2} Z_1 \\
& \approx \left( \frac{1}{4n_{-1}}  - \frac{1}{4n_{1}} \right) \tr\left( \Sigma \left(\hSigma + \lambda I_{p \times p} \right)^{-1}\right) \\
& \approx \frac{\gamma_{-1}-\gamma_{+1}}{4} \kappa(\lambda).
\end{align*}

{\bf The limit of $\mu_{\pm 1}^\top \hw_\lambda$:} We write $\mu_{1}^\top \hw_\lambda = (\bmu + \delta)^\top   (\hSigma_c + \lambda I_{p \times p})^{-1} \hdelta$, and $\mu_{1}^\top \hw_\lambda \approx \delta^\top  (\hSigma_c + \lambda I_{p \times p})^{-1} \delta \approx \alpha^2 m(-\lambda)$. Similarly $\mu_{-1}^\top \hw_\lambda \approx - \alpha^2 m(-\lambda)$.

{\bf The limit of $\hw_\lambda^\top \Sigma \hw_\lambda$:} On expanding this expression using the stochastic representation, the cross-terms vanish asymptotically. Denoting $M =(\hSigma_c + \lambda I_{p \times p})^{-1} \Sigma (\hSigma_c + \lambda I_{p \times p})^{-1} $, we obtain
\begin{align*}
\hw_\lambda^\top \Sigma \hw_\lambda & \approx \delta^\top \Sigma^{1/2} M \Sigma^{1/2}\delta +  \frac{1}{4n_{-1}} Z_{-1}^\top M Z_{-1} + \frac{1}{4n_{1}} Z_1^\top M Z_1 \\
& \approx \frac{\alpha^2}{p} \tr(\Sigma M) + \left( \frac{1}{4n_{-1}}  + \frac{1}{4n_{1}} \right) \tr(M) \\
& \approx \alpha^2 \frac{v-\lambda v'}{\gamma (\lambda v)^2} + \frac{\gamma_{-1}+\gamma_{+1}}{4}  \frac{v'-v^2}{ \lambda^2 v^4}.
\end{align*}
Let us denote by $Q$ the limit obtained. 

{\bf Putting everything together:} Using the above formulas, we see that as claimed in Eq. \ref{eq:unequal_sampling}, $Err_0\p{\hw_\lambda,\hat b} \to_{a.s.} U$:
\begin{eqnarray*}
U = \pi_- \Phi\p{\frac{- \alpha^2 m(-\lambda) + \frac{\gamma_{-1}-\gamma_{+1}}{4} \kappa(\lambda) + c }{\sqrt{Q}  }} +
 \pi_+ \Phi\p{-\frac{ \alpha^2 m(-\lambda)+ \frac{\gamma_{-1}-\gamma_{+1}}{4} \kappa(\lambda) + c}{\sqrt{Q}  }}.
\end{eqnarray*}

\subsection{Proof of Corollary \ref{cor:rda_id}}
\label{rda_null}

From Theorem \ref{theo:rda}, the limit error rate is $\Phi(-\Theta)$, where $\Theta = \alpha^2 m(-\lambda)/\sqrt{\alpha^2\,r(\lambda) + \gamma q(\lambda)}$, and 
$$r(\lambda) =\lim_{p\to\infty}\mathbb{E}\frac{1}{p}\tr\p{\Sigma \p{\hSigma_c+ \lambda I }^{-2}}  ; \,\,\,\, q(\lambda) = \lim_{p\to\infty}\mathbb{E}\frac{1}{p}\tr\p{\left[\Sigma \p{\hSigma_c+ \lambda I }^{-1}\right]^2}.$$

  Since $\Sigma = I_{p \times p}$, we have $r(\lambda) = q(\lambda)$, and both are equal to the limit of $\EE{\frac{1}{p}\tr\p{\p{\hSigma_c+ \lambda I }^{-2}} }$. Analogously to the proof of Lemma  \ref{term_C}, this limit equals $m'_I(-\lambda;\gamma)$. Therefore $\Theta$ simplifies to  $  \frac{\alpha^2}{\sqrt{\alpha^2 + \gamma}}\frac{m_I(-\lambda; \gamma)}{\sqrt{m'_I(-\lambda; \gamma)}}$.

The quantity $m'_I(-\lambda; \gamma)$ can be expressed in terms of $m_I$ by differentiating the Marchenko-Pastur equation: $m_I(z;\gamma) = 1/(1-z-\gamma-\gamma z m_I(z;\gamma))$. We get $m' = m^2 (1 + \gamma m)/( 1 - \gamma z m^2)$, which leads to the claimed expression for $\Theta$. For $\gamma=1$ we get the required formula from Eq.  \eqref{identity_stieltjes}  after some calculations. 
\subsection{Note on the Ledoit-Peche result \eqref{eq:ledoit_peche}}
\label{lp_appendix}

\cite{ledoit2011eigenvectors} prove \eqref{eq:ledoit_peche} in their Lemma 2. Our notation differs from theirs: $\gamma$ here is equal to their $\gamma^{-1}$ (because the role of $n,p$ is reversed); $\hSigma$ here is $S_N$, and $-\lambda$ here corresponds to $z$.  Their limit is given in terms of $m$, but simplifies to 
$$\kappa(\lambda) = \frac{1-\lambda m(-\lambda)}{1-\gamma(1-\lambda m(-\lambda))} =  \frac{1}{\gamma}\left(\frac{1}{\lambda v(-\lambda)}-1\right).$$

Their theorem requires a finite 12th moment on $\Sigma^{-1/2}x_i$, which holds in our case. Their results are stated for uncentered covariance matrices, but it is well known that centering does not affect limit of the eigenvalue distribution of $\hSigma$, so the result still holds \citep[see, e.g., p. 39 of ][]{bai2010spectral}. Furthermore, the normalization factor $1/n$ in front of the covariance matrix can be replaced by $1/(n-2)$, as needed by Regularized Discriminant Analysis. This follows by standard perturbation inequalities in the Frobenius norm \cite[see Corollary A.42 in][]{bai2010spectral}. The calculation is fleshed out in detail in the proof of Lemma \ref{term_C}. Finally, their convergence is stated for $z\in \mathbb{C}^+$, but by standard arguments it can be extended to real $z<0$. 

\subsection{Proof of Theorem \ref{theo:lda_ir}}
\label{pf:theo:lda_ir}

We will use the representation of the margin from theorem \ref{theo:rda}. To evaluate the necessary limits, it is helpful to represent the Stieltjes transforms and their derivatives as expectations with respect to the ESD. Thus, let $Y$ be a random variable distributed according to the ESD $F$, and let $\underline Y$ be a random variable distribued according to the companion ESD $\underline F$. Then $m$, $v$ are the Stieltjes transforms of $Y,\underline Y$, respectively. Hence
\begin{equation}
\label{stoch_rep}
m(-\lambda) = \EE{\frac{1}{Y+\lambda}}, \, m'(-\lambda) = \EE{\frac{1}{(Y+\lambda)^2}}, \, m(-\lambda)-\lambda m'(-\lambda) = \EE{\frac{Y}{(Y+\lambda)^2}}.
\end{equation}

Further $\lambda v(-\lambda)  = 1 + \gamma\left(\lambda m(-\lambda) - 1 \right)$, so $\lambda v(-\lambda) = 1-\gamma \EE{\frac{Y}{Y+\lambda}}$. 

{\bf The error rate of LDA: } As explained in Section \ref{pf:strong_signal}, for $\gamma<1$, $Y$ is supported on a compact set bounded away from 0, so $Y>c>0$ for some $c$. This will allow us to take the limits as $\lambda\downarrow0$ inside the expectation, using the dominated convergence theorem; we will not repeat this fact. Using equation \eqref{stoch_rep}, we have $\lim_{\lambda \downarrow 0} m(-\lambda) = \EE{Y^{-1}}$, $\lim_{\lambda \downarrow 0} [v(-\lambda)-\lambda v'(-\lambda)]\gamma^{-1} = \lim_{\lambda \downarrow 0} m(-\lambda)-\lambda m'(-\lambda) =  \EE{Y^{-1}}$, and by the formula for $\lambda v$, $\lim_{\lambda\downarrow0}\lambda v(-\lambda) = 1-\gamma$ (which was already shown in Section \ref{pf:strong_signal}).

Differentiating the formula for the companion Stieltjes transform, we see $\lambda^2 v'(-\lambda)  = 1+ \gamma\left(\lambda^2 m'(-\lambda)- 1\right)$. Hence, 
$$\lim_{\lambda \downarrow 0}\frac{v'(-\lambda)}{v^2(-\lambda)}=\lim_{\lambda \downarrow 0}\frac{ 1+ \gamma\left(\lambda^2 m'(-\lambda)- 1\right)}{\lambda^2v^2(-\lambda)} = \frac{\lim_{\lambda \downarrow 0}\left[ 1+ \gamma\left(\lambda^2 m'(-\lambda)- 1\right)\right]}{(1-\gamma)^2} = \frac{1}{1-\gamma},$$

where we have used that $\lim_{\lambda \downarrow 0}\lambda^2 m'(-\lambda)=\lim_{\lambda\downarrow 0} \EE{\frac{\lambda^2}{(Y+\lambda)^2}}=0$. We conclude that $\lim_{\lambda \downarrow 0}v'(-\lambda)/v^2(-\lambda)-1 = \gamma/(1-\gamma)$. Putting everything together, we find

$$\Theta_{\text{LDA}} = \lim_{\lambda\downarrow0} \frac{\alpha^2 \, \tau }{\sqrt{\alpha^2 \, \eta + \theta}}  =  \frac{\alpha^2 \, \EE{Y^{-1}} (1-\gamma) }{\sqrt{\alpha^2 \, \EE{Y^{-1}} + \frac{\gamma}{1-\gamma}}}. $$

Let $T$ be a random variable distributed according to the PSD $H$. By taking the limit as $z \to 0$, $z \in \mathbb{C}^+$ in the Marchenko-Pastur equation below, (the validity of the limit is justified by arguments similar to those in Section  \ref{pf:strong_signal}),
\begin{equation*}
\label{eq:mp_eq}
m(z) = \int_{t=0}^{\infty} \frac{\, dH(t)}{t \p{ 1 - \gamma - \gamma z m\p{z} }-z}.
\end{equation*}
  we find $m(0) = \int\frac{1}{t(1-\gamma)}dH(t)$, or equivalently $\EE{Y^{-1}} = \EE{T^{-1}} /(1-\gamma)$. This leads to the formula for $\Theta_{\text{LDA}}$.

{\bf The error rate of IR: } We have $\lambda m(-\lambda) = \EE{\frac{\lambda}{Y+\lambda}}$. As explained in Section \ref{pf:strong_signal}, $Y$ is a bounded random variable, so $\lim_{\lambda\to\infty}\lambda m(-\lambda) = 1$. Similarly $\lim_{\lambda\to\infty} \lambda v(-\lambda) = 1$. Next, we note

$$\lim_{\lambda\to\infty}\lambda^2 \left[m(-\lambda)-\lambda m'(-\lambda)\right]  = \lim_{\lambda\to\infty}\EE{\left(\frac{\lambda}{Y+\lambda}\right)^2Y}=\EE{Y}.$$

Finally, we evaluate the limit of $\lambda^2\left(\frac{v'(-\lambda)}{v^2(-\lambda)}-1\right)$. Noting that $\lambda v$ tends to 1, it is enough to find the limit of $\lambda^4(v'(-\lambda)-v^2(-\lambda))$. We compute

\begin{align*}
&\lambda^2(v'(-\lambda)-v^2(-\lambda))=\EE{\left(\frac{\lambda}{\underline Y+\lambda}\right)^2}-\EE{\frac{\lambda}{\underline Y+\lambda}}^2 =\\
&\EE{\left(1- \frac{\underline Y}{\underline Y+\lambda}\right)^2}-\left(1 - \EE{\frac{\underline Y}{\underline Y+\lambda}}\right)^2 = \EE{\left(\frac{\underline Y}{\underline Y+\lambda}\right)^2}-\EE{\frac{\underline Y}{\underline Y+\lambda}}^2.
\end{align*}

Therefore, 
\begin{align*}
\lim_{\lambda\to\infty} \lambda^4(v'(-\lambda)-v^2(-\lambda)) &= \lim_{\lambda\to\infty} \left\{ \EE{\underline Y^2 \left(\frac{\lambda}{\underline Y+\lambda}\right)^2}-\EE{\underline Y\frac{\lambda}{\underline Y+\lambda}}^2 \right\} \\
&= \EE{\underline Y^2}-\EE{\underline Y}^2.
\end{align*}

Using the relationship $\underline F = \gamma F + (1-\gamma) \delta_0$, we can write $\EE{\underline Y} = \gamma \EE{Y}$ and $\EE{\underline Y^2} = \gamma \EE{Y^2}$. Putting everything together, we find

$$\Theta_{\text{IR}} = \lim_{\lambda\to\infty} \frac{\alpha^2 \, \tau \lambda }{\sqrt{\alpha^2 \, \eta \lambda^2 + \theta \lambda^2}}  =  \frac{\alpha^2}{\sqrt{\alpha^2 \, \EE{Y} + \gamma(\EE{Y^2}-\gamma\EE{Y}^2)}}. $$

Finally, it is known that $\EE{Y} =\EE{T}$, $\EE{Y^2} =\EE{T^2}+ \gamma\EE{T}^2$ (see, e.g., Lemma 2.16 in \cite{yao2015large}). This leads to the claimed formula.

\subsection{Proof of Corollary \ref{cor:lda_ir_minimax}}
\label{pf:cor:lda_ir_minimax}

From Theorem \ref{theo:lda_ir}, minimizing $\Theta_{\text{LDA}}$ is equivalent to minimizing $\mathbb{E}_H[T^{-1}]$ for $H \in \mathcal{H}(k_1,k_2)$. By Jensen's inequality, $\mathbb{E}_H[T^{-1}] \ge 1/\mathbb{E}_H[T]=1$; with equality if $H=\delta_1$. This shows the first claim.

Again by Theorem \ref{theo:lda_ir}, minimizing $\Theta_{\text{IR}}$ over $H \in \mathcal{H}(k_1,k_2)$ amounts to maximizing $\mathbb{E}_H[T^{2}]$ over that class. For this, note that $k_1 \le T \le k_2$ for a random variable $T$ distributed according to $H \in \mathcal{H}(k_1,k_2)$. Therefore $(T-k_1)(T-k_2)\le 0$, and taking expectations we get the upper bound

$$  \mathbb{E}_H[T^{2}] \le (k_1+k_2)\mathbb{E}_H[T] -k_1k_2 = k_1+k_2-k_1k_2.$$

This upper bound is achieved for any $H = w_1\delta_{k_1}+ w_2\delta_{k_2}$. It is now easy to check that there exists a unique set of weights $w_i$ such that a distribution of the above form has unit mean, so that it belongs to $\mathcal{H}(k_1,k_2)$; and those are the weights given in the corollary. 

\end{document}